\documentclass[11pt]{article}
\usepackage[centertags]{amsmath}
\usepackage{amsfonts}
\usepackage{amssymb}
\usepackage{amsthm}
\usepackage{newlfont}
\usepackage{stmaryrd}
\usepackage[]{titletoc}  
\usepackage{mathrsfs}
\usepackage{stmaryrd}
\usepackage{graphicx}

\usepackage[centertags]{amsmath}
\usepackage{amsfonts}

\usepackage{hyperref}

\hfuzz2pt 
\newlength{\defbaselineskip}
\setlength{\defbaselineskip}{\baselineskip}
\newcommand{\setlinespacing}[1]%
           {\setlength{\baselineskip}{#1 \defbaselineskip}}

\theoremstyle{plain}
\newtheorem{thm}{Theorem}[section]
\newtheorem{cor}[thm]{Corollary}
\newtheorem{lem}[thm]{Lemma}
\newtheorem{prop}[thm]{Proposition}
\newtheorem{exam}[thm]{Example}
\newtheorem{rem}[thm]{Remark}
\newtheorem{Def}[thm]{Definition}

\makeatletter\@addtoreset{equation}{section} \makeatother
\setcounter{page}{1}
\begin{document}
\title {Geometric constructions of thin Blaschke products and reducing subspace problem}
\author{Kunyu Guo\hskip6mm  Hansong Huang}
\date{}
 \maketitle \noindent\textbf{Abstract:}
In this paper, we mainly  study   geometric constructions of thin
Blaschke products $B$ and  reducing subspace problem of
 multiplication operators  induced by such symbols $B$ on the Bergman space.
   Considering  such   multiplication operators  $M_B$,
  we present a representation of those operators  commuting with both $M_B$ and
  $M_B^*$.   It is shown that for ``most" thin Blaschke products $B$, $M_B$ is irreducible, i.e. $M_B$  has no nontrivial reducing
subspace; and such a  thin Blaschke product $B$ is constructed.
 As an application of the methods, it is proved that
 for ``most" finite Blaschke products $\phi$, $M_\phi$ has exactly two minimal reducing subspaces.
 Furthermore, under a mild condition, we get a geometric characterization for when $M_B$ defined by a thin Blaschke product $B$
  has a nontrivial reducing subspace.

 \vskip 0.1in \noindent \emph{Keywords:} thin Blaschke product;
local inverse and analytic cintinuation; irreducible; von Neumann algebra;
reducing subspaces

\vskip 0.1in \noindent\emph{2000 AMS Subject Classification:} 47C15;  30F99; 57M12;.

\section{Introduction}
~~~~Let $\mathbb{D}$ be the open unit disk in the complex plane
$\mathbb{C}$.
 Denote by $L^2_a(\mathbb{D})$ the Bergman space consisting of
all holomorphic functions over $\mathbb{D}$ which are square
integrable with respect to  $dA$, the normalized area measure on
$\mathbb{D}$.
 For any bounded holomorphic function $\phi$ over $\mathbb{D}$, let $M_\phi$ be the multiplication operator
 defined on  $L^2_a(\mathbb{D})$ by the symbol $\phi$. Recall that, a closed  subspace $M$
 is called  a \emph{reducing subspace} of $M_\phi$  if $M_\phi M\subseteq M$ and $M_\phi^*M\subseteq M$.
 In this paper,  let $\mathcal{W}^*(\phi)$ be the von Neumann
 algebra generated by $M_\phi$ and denote
 $ \mathcal{V}^*(\phi)\triangleq \mathcal{W}^*(\phi)'$, the commutant algebra of $\mathcal{W}^*(\phi)$.
 For each closed subspace $M$, denote by $P_M$ the orthogonal projection onto $M$. Then
 one sees that  there is a one-to-one and onto correspondence: $M\mapsto P_M$, which maps reducing
subspaces of $M_\phi$  to projections in $\mathcal{V}^*(\phi)$.
Since a von Neumann algebra is generated by its projections,
 the study of $\mathcal{V}^*(\phi)$ is reduced to studying the lattice of projections in $\mathcal{V}^*(\phi)$, which
 is equivalent to studying the reducing subspaces for $M_\phi$.
By von Neumann bicommutant theorem $\mathcal{V}^*(\phi)'= \mathcal{W}^*(\phi)$,  which
 help us to understand  the structure of $\mathcal{V}^*(\phi)$ and $ \mathcal{W}^*(\phi)$.

If  $ M_\phi  $ has no nontrivial reducing subspace, then  $ M_\phi$ is called \emph{irreducible}. This means  there is no nontrivial
projection  in $ \mathcal{V}^*(\phi)$, and thus $\mathcal{V}^*(\phi)= \mathbb{C}I$. Therefore, by von Neumann
bicommutant theorem, $\mathcal{W}^*(\phi)=B(L^2_a(\mathbb{D}))$, all
bounded operators on $L_a^2(\mathbb{D})$.

In a similar way, one can define  $\mathcal{W}^*(\phi)$ and
$\mathcal{V}^*(\phi)$ on the Hardy space $H^2(\mathbb{D})$. As mentioned above,
each reducing subspace of $M_\phi$ is exactly the range of some
 projection in $ \mathcal{V}^*(\phi)=\{M_\phi, M_\phi^*\}'$,
 and then  the problem of finding reducing subspaces can be generalized to that of
determining the commutant $\{M_\phi\}'$ of  $M_\phi$ of the Hardy
space.  In this line, one may refer to \cite{A,AB,DW,No,T3}, and
also see \cite{AC,ACR,AD,Cl,Cow1,Cu,JL,Zhu,Zhu2}. In most cases,
multiplication  operators induced by inner functions play a
significant role in commutant problem£¬ just  as done in \cite{BDU,DW,T1}.
 Thomson showed  that if  $h$ is holomorphic on a neighborhood of $\overline{\mathbb{D}}$, then there is always a finite Blaschke product $B$
   and a function $g $ in $ H^\infty(\mathbb{D})$ such that $h=g\circ B $ and
     $\{M_B\}'=\{M_h\}'$, see \cite{T1}. In particular,     $\mathcal{V}^*(h)=\mathcal{V}^*(B)$. It should be pointed out that this assertion is valid not only on the Hardy space but also on the  Bergman space.
It is worthwhile to mention that   when $\eta$ is an inner function, not a M$\ddot{o}$bius transform,
   $M_\eta$ acting on the Hardy space  always has infinitely many minimal reducing subspaces. This follows   from
   the fact that $M_\eta$ is an isometric operator on $H^2(\mathbb{D})$, and hence   $\mathcal{V}^*(\eta)$
    is $*$-isomorphic to $B(H^2(\mathbb{D})\ominus \eta H^2(\mathbb{D}))$, all bounded linear operators on $H^2(\mathbb{D})\ominus \eta H^2(\mathbb{D})$ \cite{No}.

 When we focus on  the Bergman space, the case is different and  difficult. For  a multiplication operator $M_B$  defined by  a finite Blaschke products $B$ with $\deg B\geq 2$,
 one knows that  $M_B$ always  has a  distinguished reducing subspace,  and hence  $\mathcal{V}^*(B)$ is always nontrivial, and
 $\dim \mathcal{V}^*(B) \leq \deg B$, see \cite{HSXY,GSZZ,GH1,Sun}. In the case   $\deg B=2$, it was shown  that   $M_B$   exactly has two distinct minimal
reducing subspaces in \cite{SW} and  \cite{Zhu} independently. Motivated by this fact, Zhu  conjectured that for a finite
Blaschke product  $B$ of degree $n$, there are exactly $n$ distinct minimal reducing subspaces\cite{Zhu}.  In fact, by applying
 \cite[Theorem 3.1]{SZZ2} Zhu's conjecture holds only in very restricted case.
    Therefore, the conjecture
   is modified as follows: $M_B$ has at most $n$ distinct minimal reducing subspaces, and  the number of nontrivial minimal
    reducing subspaces of $M_B$ equals the number of connected components of the Riemann surface
of $B^{-1}\circ B$ on the unit disk \cite{DSZ}.  As verified in \cite{DSZ, GH1},  the modified conjecture is equivalent  to assertion that
 $\mathcal{V}^*(B)$ is abelian.   In the case of $\deg B =3,4,5,6,$ the  modified conjecture
 is demonstrated in \cite{GSZZ,SZZ1,GH1}.  By using the techniques of local inverse and group-theoretic methods,
it was shown  in  \cite{DSZ} that when $\deg B=7, 8,$
 $\mathcal{V}^*(B)$ is abelian. Very recently, an affirmative answer was given for the general case by  Douglas,  Putinar and
   Wang\cite{DPW}. They showed that  $\mathcal{V}^*(B)$ is abelian for any finite Blaschke product.

 However, for an infinite Blaschke product $B$, little is known about the structure of $\mathcal{V}^*(B) $.
 One naturally asks if $\mathcal{V}^*(B) $ always is nontrivial. This is equivalent to the  following   question:
\vskip1.5mm
    \emph{For each infinite Blaschke product $B$, does $M_B$ always have a nontrivial  reducing subspace?}
     \vskip1.5mm
\noindent In view of the case for finite Blaschke products, one may guess that the answer is yes. However, this
paper gives a     negative  answer by constructing a   thin Blaschke product.

  This paper mainly deals with  thin Blaschke products; a  class of infinite Blaschke products.

In this paper, $B$ always denotes a thin Blaschke product, if there
is no other explanation. It is shown that each thin Blaschke product
is a branched covering map, and therefore the method in
\cite{GH2} can be applied to construct  a representation for those
operators in $\mathcal{V}^*(B)$. This then yields  a geometric
characterization for when $\mathcal{V}^*(B)$ is trivial. We find  thin Blaschke products are   $*$-connected. Based on this fact, it is shown  that for ``most" thin Blaschke products $B$, $M_B$ is
irreducible, that is, $M_B$ has no nontrivial reducing subspace. Furthermore,  such a thin Blaschke product $B$ is constructed.
 This is a strong contrast with the case of finite Blaschke products.
  Moreover,  as an application of the methods, we will see that for "most" finite
Blaschke products $\phi$, $M_\phi$ has exactly $2$ minimal   reducing
subspaces. Furthermore, under a mild condition, we give a
geometric characterization for when $M_B$ defined by a thin Blaschke product $B$ admits a nontrivial
reducing subspace.

The paper is organized as follows.

In Section 2,  we  give some properties of thin Blaschke product.
In Section 3, by using the methods in \cite{GH2},  a   representation for those operators in $\mathcal{V}^*(B)$ is presented.
Applying the techniques of analytic continuation and local inverse  gives a geometric sufficient condition for when $\mathcal{V}^*(B)$ is trivial.
In Section 4, by developing    function-theoretic techniques,
 we show that under a mild condition of $B$, $\mathcal{V}^*(B)$ is always trivial and in Section 5,  such a thin Blaschke product $B$ is constructed.
 Section 6 describes  when $\mathcal{V}^*(B)$ is nontrivial by geometry of $B$.
 In Section 7, it is shown  that   $\mathcal{V}^*(B)$ is abelian under a mild condition, which is of interest when contrasted with
  the commutativity    of  $\mathcal{V}^*(\phi)$, where $\phi$ are   finite Blaschke products.

\section{Some properties of thin Blaschke products}
~~~~ In this section,   we mainly show that each thin Blaschke
product is a branched covering map, and  some properties are
obtained.

Before continuing, we need some definitions.
 A holomorphic map \linebreak $\phi:    \mathbb{D}\to  \Omega$ is called \emph{a
branched covering map} if  every point of $ \Omega$ has a connected
open
 neighborhood $U$ such that  each connected component $V$ of $\phi^{-1}(U)$ maps onto
 $U$ by a proper map $\phi|_V$\cite[Appendix E]{Mi}. If in addition, all such maps $\phi|_V$ are biholomorphic maps, then
 $\phi$ is a  holomorphic covering map. For example, a finite Blaschke product $B$ is a branched covering map from $\mathbb{D}$ onto $\mathbb{D}$,
  and moreover,     a proper map.
 It is known that a proper map has some good properties: for instance, it is a \emph{$k$-folds map} for some positive integer $k$ \cite[Appendix E]{Mi}. That is,
 for a proper map $\phi$ and any $w_0  $ in its range, $\phi-w_0$ has exactly $k$ zeros, counting multiplicity.
 This integer $k$  does not depend on the choice of $w_0$.

 On the unit disk, \emph{the pseudohyperbolic  distance} between two points $z$ and $\lambda$
is defined by $$d(z,\lambda)=|\varphi_\lambda(z)|,$$ where $$\varphi_\lambda(z)=
\frac{\lambda-z}{1-\overline{\lambda}z}.$$     A   Blaschke product $B$ is
called \emph{an interpolating Blaschke product}   if its zero
sequence $\{z_n\}$ satisfies $$ \inf_k  \prod_{j\geq 1, j\neq k}
d(z_j,z_k)>0 .
$$ A   Blaschke product $B$ is called \emph{a thin Blaschke
product} \cite{GM1} if    the zero sequence $\{z_n\}$ of  $B$
satisfies
$$
\lim_{k\to\infty} \prod_{j\geq 1, j\neq k} d(z_j,z_k)=1,
$$ Noticing that $\prod_{j\geq 1, j\neq k} d(z_j,z_k)=(1-|z_k|^2)|B'(z_k)| $, the above is equivalent to
 $$\lim_{k\to\infty}(1-|z_k|^2)|B'(z_k)| =1.$$ One
 can show that a thin Blaschke product is never a covering map since its image is exactly the unit disk $\mathbb{D}$\cite[Lemma 3.2(3)]{GM1}.
 However, there do  exist  interpolating Blaschke products
which are also covering maps, 
see \cite[Theorem 6]{Cow1} or \cite[Example 3.6]{GH2}.

Below, we denote by $\Delta(z,r)$ the pseudohyperbolic
disk centered at $z$ with radius $r$;
that is, $$\Delta(z,r)=\{w\in \mathbb{D}:|\frac{z-w}{1-\overline{z}w}|<r\}.$$
 For a Blaschke product $B$, let $B'$ denote the derivative of $B$ and   write the critical value set
  $$\mathcal{E}_B=\{B(z): z\in \mathbb{D} \  \mathrm{and }\   B'(z)=0\}.$$
Let $Z(B)$ and $Z(B')$ denote the zero sets of $B$ and $B'$, respectively.

  We have the following proposition.
\begin{prop} Let $B$ be a  thin Blaschke product. Then both $\mathcal{E}_B$ and $B^{-1}(\mathcal{E}_B)$ are \label{21}
 discrete in $\mathbb{D}$.
\end{prop}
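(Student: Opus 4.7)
The plan is to handle $\mathcal{E}_B$ first and then derive discreteness of $B^{-1}(\mathcal{E}_B)$ as an easy consequence. Since $B$ is a nonconstant holomorphic function on $\mathbb{D}$, so is $B'$, and thus the critical set $Z(B')$ itself is a discrete subset of $\mathbb{D}$. In particular, any infinite sequence $\{c_n\}$ of distinct critical points of $B$ must satisfy $|c_n|\to 1$.

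For $\mathcal{E}_B=B(Z(B'))$ I would argue by contradiction: suppose $w_n=B(c_n)\to w_0\in\mathbb{D}$ with the $w_n$'s distinct. Then the $c_n$ are distinct critical points, hence $|c_n|\to 1$, and the crux is to show that this already forces $|B(c_n)|\to 1$, contradicting $w_n\to w_0\in\mathbb{D}$. For this I would invoke two standard consequences of thinness (cf.\ \cite{GM1}). \emph{First}, local univalence near the zeros: for every $\rho\in(0,1)$ there exists $N=N(\rho)$ such that for all $n\ge N$, $B$ is univalent on the pseudohyperbolic disk $\Delta(z_n,\rho)$, and in particular this disk contains no critical point. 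This follows from the factorization $B=\varphi_{z_n} B_n$ together with $|B_n(z_n)|=\prod_{j\neq n} d(z_j,z_n)\to 1$, which makes $B$ asymptotically a M\"obius map on large pseudohyperbolic disks around each far-out zero. \emph{Second}, a boundary estimate valid for any interpolating Blaschke product: for each $\rho<1$, $|B(z)|\to 1$ as $|z|\to 1$ uniformly over the set $\{z\in\mathbb{D}:d(z,z_k)\ge\rho\text{ for every }k\}$.

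Combining these: fix $\rho<1$. For the finitely many $k<N(\rho)$, the hypothesis $|c_n|\to 1$ automatically puts $c_n$ outside $\Delta(z_k,\rho)$ eventually; for $k\ge N(\rho)$ the first ingredient says $\Delta(z_k,\rho)$ contains no critical point, so again $c_n\notin\Delta(z_k,\rho)$. Hence $d(c_n,z_k)\ge\rho$ for every zero $z_k$ once $n$ is large, and the second ingredient yields $|B(c_n)|\to 1$, the desired contradiction. Discreteness of $B^{-1}(\mathcal{E}_B)$ in $\mathbb{D}$ is then formal: an accumulation point $z_0\in\mathbb{D}$ would produce distinct $z_n\to z_0$ with $B(z_n)\in\mathcal{E}_B$; continuity plus discreteness of $\mathcal{E}_B$ in $\mathbb{D}$ forces $B(z_n)=B(z_0)$ eventually, making the $z_n$'s accumulating zeros of $B-B(z_0)$ in $\mathbb{D}$, which is impossible. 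The main obstacle is the synergy in the previous paragraph: translating thinness into the twin geometric facts that critical points drift pseudohyperbolically away from every zero as $|c_n|\to 1$, \emph{and} that on such a regime $|B|$ is squeezed close to $1$. Once those pieces are in hand, the proposition is essentially automatic.
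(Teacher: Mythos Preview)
Your proposal is correct and follows essentially the same route as the paper: the two key ingredients you isolate---univalence of $B$ on large pseudohyperbolic disks around all but finitely many zeros (which the paper derives from $|g_n'(0)|\to 1$ with $g_n=B\circ\varphi_{z_n}$ and a univalence criterion from \cite{Ne}), and the estimate $|B(z)|\ge\delta$ outside $\bigcup_n\Delta(z_n,\varepsilon)$---are exactly what the paper uses, and your contradiction framing is just a repackaging of the paper's direct argument. Your deduction of discreteness of $B^{-1}(\mathcal{E}_B)$ is likewise equivalent to the paper's (which simply notes that $B(r\mathbb{D})\cap\mathcal{E}_B$ is finite for each $r<1$).
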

\begin{proof} The proof is divided  into two parts.

 \noindent{\emph{Step 1.}} First we will prove that the critical value set
 $ \mathcal{E}_B $ is discrete in  $\mathbb{D}$.

 It suffices to show that for any $r\in (0,1)$, each $w\in r\mathbb{D}$
  is not an accumulation point of $\mathcal{E}_B$.
   To see this, we will make use of the following statement, see \cite{La,Ga} and \cite[Lemma 4.2]{GM2}:

\emph{  Suppose that $B$ is a  thin Blaschke product with the zero
sequence $\{z_k\}$,
  then for each $ \delta $ with $0<\delta<1$, there exists an $\varepsilon\in(0,1)$ such that $|B(z)|\geq \delta$
  whenever $d(z,z_n)\geq \varepsilon$ for all $n$.}
\vskip1.6mm
  \noindent  Now put $\delta=\frac{1+r}{2}$.
  By the above statement, there exists an $\varepsilon\in(0,1)$ such that $|B(z)|\geq \delta$
  whenever $d(z,z_n)\geq \varepsilon$ for all $n$. Then by a simple argument, one sees that it
  is enough to show that except for finitely many
  $n$, $B|_{\Delta(z_n,\varepsilon)}$ is univalent,  and hence $B'$ has no zero point on $ \Delta(z_n,\varepsilon) $.
   Recall that a holomorphic function $f$ on a domain $\Omega\subseteq
\mathbb{C}$ is called \emph{univalent} if $f$ is injective in
$\Omega$.

      In fact, put $$g_n=B\circ \varphi_{z_n},$$
 and   it is easy to check that
      \begin{equation}
      \lim_{n\to \infty} |g_n'(0)|=\lim_{n\to\infty} \prod_{j\geq 1, j\neq n} d(z_j,z_n)=1. \label{2.1}
      \end{equation}
But a theorem in \cite[p. 171,\, Exercise 5]{Ne} states that, if
$f$ is a holomorphic function from $\mathbb{D}$ to $\mathbb{D}$
 such that
 $$f(0)=0\ \ \mathrm{and} \ \ |f'(0)|=a,$$
 then $f$ is univalent in  $t\mathbb{D}$ with $t\equiv  t(a)=\frac{a}{1+\sqrt{1-a^2}}.$ 
 So each $g_n$ is  univalent on $t(|g_n'(0)|)\mathbb{D}$.
Then by (\ref{2.1}),   there is some natural number $N_0$ such that
$$t(|g_n'(0)|) >\varepsilon, \ n\geq N_0.$$ Thus,
  for all $n\geq N_0$, $g_n$ are
univalent on $ \varepsilon\mathbb{D}$; that is, $B$ is univalent on
$\Delta(z_n,\varepsilon)$. Therefore $\mathcal{E}_B $  is discrete
in  $\mathbb{D}$.

 \noindent{\emph{Step 2.}} Next we show that $B^{-1}(\mathcal{E}_B)$ is discrete in the open  unit disk;
equivalently, $B^{-1}(\mathcal{E}_B)\cap  r\mathbb{D}$ is finite
  for any $0<r<1$.

Now fix such an $r$.
    The  discrete property of $\mathcal{E}_B $
shows that $B(r\mathbb{D})\cap \mathcal{E}_B $ is finite, and hence
$B^{-1}(\mathcal{E}_B)\cap  r\mathbb{D}$  is discrete.
   By arbitrariness of  $ r$,
   $B^{-1}(\mathcal{E}_B)$ is discrete   in
  $\mathbb{D}$,  as desired. The proof is complete.
\end{proof}

Later, we will see that $Z(B')$   always contains infinite points,
say \linebreak $w_1,\cdots,w_n,\cdots$. Since $\mathcal{E}_B$  is
 discrete in $\mathbb{D}$, then   $\lim\limits_{n\to \infty }|B(w_n)|=1$.

 Below, we will show that each thin Blaschke product is a branched covering
 map. First we need a lemma, which is interest in itself. 
\begin{lem} Suppose that $\phi$ is a holomorphic function over $\mathbb{D}$ and $\Delta$ is a  domain in $\mathbb{C}$.
If $\Delta_0$ is a component of $\phi^{-1}(\Delta)$ satisfying
\label{22}
 $\overline{\Delta_0}\subseteq \mathbb{D}$,
then $\phi|_{\Delta_0} :\Delta_0\to\Delta$ is a proper map  onto
$\Delta$.
\end{lem}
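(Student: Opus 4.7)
The plan is to prove the two claims separately: first that $\phi|_{\Delta_0}$ is proper, and then use properness to deduce surjectivity via a connectedness argument on $\Delta$.

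For properness, I would fix a compact set $K\subseteq \Delta$ and try to show that $(\phi|_{\Delta_0})^{-1}(K)=\phi^{-1}(K)\cap \Delta_0$ is compact. The hypothesis $\overline{\Delta_0}\subseteq \mathbb{D}$ makes $\overline{\Delta_0}$ compact in $\mathbb{C}$, so $\phi^{-1}(K)\cap \overline{\Delta_0}$ is closed in a compact set, hence compact. It therefore suffices to prove that no point of $\partial \Delta_0$ lies in $\phi^{-1}(K)$. Here is where the component hypothesis enters: if $z\in \partial\Delta_0$ and $\phi(z)\in K\subseteq \Delta$, then since $\Delta$ is open and $\phi$ is continuous we can find a connected open neighborhood $U$ of $z$ with $\phi(U)\subseteq \Delta$. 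But $z\in \overline{\Delta_0}$ forces $U\cap\Delta_0\neq\emptyset$, so $U\cup \Delta_0$ is a connected subset of $\phi^{-1}(\Delta)$ containing $\Delta_0$. Maximality of the component $\Delta_0$ then forces $U\subseteq \Delta_0$, contradicting $z\in \partial\Delta_0$. This yields $\phi^{-1}(K)\cap \overline{\Delta_0}=\phi^{-1}(K)\cap \Delta_0$, proving properness.

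For surjectivity, I would show that $\phi(\Delta_0)$ is both open and closed in $\Delta$, and then invoke the connectedness of $\Delta$. Openness comes from the open mapping theorem, once we rule out that $\phi$ is constant on $\Delta_0$: if $\phi\equiv c$ on the open set $\Delta_0$, the identity theorem forces $\phi\equiv c$ on $\mathbb{D}$; then $c\in\Delta$ gives $\phi^{-1}(\Delta)=\mathbb{D}$ and $\Delta_0=\mathbb{D}$, contradicting $\overline{\Delta_0}\subseteq\mathbb{D}$. For closedness in $\Delta$, take $w_n=\phi(z_n)\in\phi(\Delta_0)$ with $w_n\to w\in\Delta$; the set $\{w_n\}\cup\{w\}$ is compact in $\Delta$, so by properness $\{z_n\}$ lies in a compact subset of $\Delta_0$ and has a subsequential limit $z\in\Delta_0$ with $\phi(z)=w$. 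Hence $\phi(\Delta_0)=\Delta$.

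The main obstacle is the properness argument, and specifically the step ruling out boundary preimages of $K$; everything else is then a standard clopen/connectedness argument. The subtlety is to exploit that $\Delta_0$ is a \emph{maximal} connected subset of $\phi^{-1}(\Delta)$ rather than just any connected open set mapping into $\Delta$. The condition $\overline{\Delta_0}\subseteq \mathbb{D}$ plays two roles: it supplies the ambient compactness needed to run the properness proof, and it prevents the degenerate situation $\Delta_0=\mathbb{D}$ that would arise with a constant $\phi$.
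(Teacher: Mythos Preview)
Your proof is correct and follows essentially the same approach as the paper's: the properness step is identical in spirit (a boundary point of $\Delta_0$ mapping into $\Delta$ would violate maximality of the component), and surjectivity is then deduced from properness together with the connectedness of $\Delta$. The only cosmetic difference is that the paper locates a boundary point of $\phi(\Delta_0)$ inside $\Delta$ by walking along a path and then contradicts properness sequentially, whereas you package the same idea as the cleaner ``$\phi(\Delta_0)$ is clopen in $\Delta$'' argument.
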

\begin{proof} First we show that $\phi|_{\Delta_0} :\Delta_0\to\Delta$ is a proper map.
To see this, assume conversely that there is a sequence $\{z_n\}$ in
$\Delta_0 $ with no limit point in $\Delta_0$, such that
$\{\phi(z_n)\}$ has a limit point $w_0\in \Delta$. Without loss of
generality, assume that $z_n$ converges to some $z_0\in \partial
\Delta_0 $ and
$$\lim_{n\to\infty} \phi(z_n)=w_0.$$
 Since $\overline{\Delta_0}\subseteq \mathbb{D}$,
$z_0\in \mathbb{D}$. Then by continuity of $\phi$, $\phi(z_0)=w_0$,
which shows that there is a connected neighborhood $\Delta_1$ of
$z_0$ such that
$\Delta_1\subseteq \phi^{-1}(\Delta)$.
Notice that $\Delta_0\subsetneqq \Delta_0\cup\Delta_1$, which is a
contradiction to the assumption that $\Delta_0$ is a component of
$\phi^{-1}(\Delta)$. Therefore  $\phi|_{\Delta_0}
:\Delta_0\to\Delta$ is a proper map.

Next we show that $\phi|_{\Delta_0} :\Delta_0\to\Delta$ is onto.
Otherwise, there is a point $w_1\in \Delta-\phi(\Delta_0).$ Since
$\Delta$ is connected, then there is a path $l:[0,1]\to \Delta$ such
that $l(0)\in \phi(\Delta_0)$ and $l(1)=w_1$. Let $t_0$ be the
supremum of
$$\{s\in [0,1]; l(t)\in \phi(\Delta_0)  \mathrm{\ \,for\, all\,\ }  t\in [0,s]\}.$$
By an argument of analysis, $l(t_0)\in \partial \phi(\Delta_0) $.
Write $w_2=l(t_0),$ and clearly  there is a sequence $\{z_n'\}$
in $\Delta_0$ such that $$\lim_{n\to \infty}\phi(z_n')=w_2.$$ Since
$w_2\in\partial \phi(\Delta_0)$, it follows that $\{z_n'\}$ has no limit
point in $\Delta_0$. But $\{\phi(z_n')\}$ has a limit point $w_2\in
\Delta$, which is a contradiction to the statement that
$\phi|_{\Delta_0} :\Delta_0\to\Delta$ is a proper map. Therefore
$\phi|_{\Delta_0} :\Delta_0\to\Delta$ is onto.
\end{proof}

\begin{prop} All thin Blaschke products are branched covering maps. \label{23}
\end{prop}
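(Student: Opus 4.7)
The plan is to verify the branched-covering condition at every point $w_0$ in the image $B(\mathbb{D})=\mathbb{D}$ (the range equals the disk by the cited Lemma 3.2(3) of \cite{GM1}). Concretely, I will produce a connected open neighborhood $U\ni w_0$ such that every connected component $V$ of $B^{-1}(U)$ satisfies $\overline{V}\subseteq\mathbb{D}$; Lemma \ref{22} will then immediately promote each $B|_V:V\to U$ to a proper surjection, which is exactly the branched-covering property.

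Fix $w_0\in \mathbb{D}$, pick $r\in(|w_0|,1)$, and set $\delta=(1+r)/2$. Invoking the Garnett/Lange-type statement quoted in the proof of Proposition \ref{21}, I choose $\varepsilon\in(0,1)$ such that $|B(z)|\geq \delta$ whenever $d(z,z_n)\geq \varepsilon$ for all $n$, and let $U$ be a connected open neighborhood of $w_0$ with $\overline{U}\subseteq r\mathbb{D}$. Then
$$
B^{-1}(U)\;\subseteq\;\{z:|B(z)|\leq r\}\;\subseteq\;\bigcup_n \Delta(z_n,\varepsilon).
$$

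The main obstacle, and where the thin hypothesis does its real work, is showing that the components of this ambient union are relatively compact in $\mathbb{D}$. I would deduce this in two steps. First, the identity $\prod_{j\neq k}d(z_j,z_k)\to 1$ forces $\inf_{j\neq k}d(z_j,z_k)\to 1$ as $k\to\infty$, so for the fixed $\varepsilon$ above there exists $N_0$ beyond which $\Delta(z_n,\varepsilon)$ is pseudohyperbolically disjoint from every other $\Delta(z_m,\varepsilon)$. Second, the Euclidean diameter of $\Delta(z_n,\varepsilon)$ tends to $0$ as $|z_n|\to 1$, so enlarging $N_0$ I may further ensure that for $n\geq N_0$ the disk $\Delta(z_n,\varepsilon)$ also misses each of the finitely many low-index disks $\Delta(z_j,\varepsilon)$ with $j<N_0$. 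Consequently $\bigcup_n \Delta(z_n,\varepsilon)$ decomposes into finitely many ``blobs'' (finite unions of pseudohyperbolic disks indexed by $j<N_0$) together with the pairwise disjoint disks $\Delta(z_n,\varepsilon)$ for $n\geq N_0$, and each such connected piece has closure inside $\mathbb{D}$.

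Any connected component $V$ of $B^{-1}(U)$ must be contained in a single connected component of this ambient union, so $\overline{V}\subseteq\mathbb{D}$. Lemma \ref{22} then yields that $B|_V:V\to U$ is a proper map onto $U$, and since $w_0$ was arbitrary, $B$ is a branched covering map. Without the thin hypothesis, nothing would stop the disks $\Delta(z_n,\varepsilon)$ from chaining together along a path approaching $\partial \mathbb{D}$, and a component of $B^{-1}(U)$ could then fail to be relatively compact in $\mathbb{D}$ --- making Lemma \ref{22} inapplicable. That is precisely the difficulty the thin condition is being used to sidestep.
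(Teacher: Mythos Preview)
Your proof is correct and follows essentially the same route as the paper: cover $B^{-1}(U)$ by the pseudohyperbolic disks $\Delta(z_n,\varepsilon)$, use the thin condition to force $\Delta(z_n,\varepsilon)\cap\Delta(z_m,\varepsilon)=\emptyset$ for all $m\neq n$ once $n\geq N_0$, conclude that every component of the union has compact closure in $\mathbb{D}$, and then invoke Lemma~\ref{22}. Your ``second step'' about Euclidean diameters is redundant---the first step already disjoins $\Delta(z_n,\varepsilon)$ from \emph{all} other disks, low-index ones included---but this is harmless; the paper simply uses the pseudohyperbolic triangle inequality (its inequality~(2.3)) to make the same disjointness explicit and takes $U=\Delta(0,\delta)$ directly rather than a sub-neighborhood.
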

\begin{proof} Suppose that  $B$ is a thin Blaschke product, $\{z_n\}$ being its zero sequence.
By \cite[Lemma 3.2]{GM1},   $B(\mathbb{D})=\mathbb{D}$.  Now we will
show that $B: \mathbb{D} \to \mathbb{D}$ is a branched covering map.
For each $z\in \mathbb{D}$, set $\delta=\frac{1+|z|}{2}$. Clearly,
$z\in \Delta(0,\delta).$ To finish the proof, we must show that each
connected component of $B^{-1}(\Delta(0,\delta))$ maps onto
$\Delta(0,\delta)$ by a proper map, i.e. by the restriction of $B$
on $\mathcal{J}$.

To see this, notice that by an inclined statement  below Proposition
\ref{21}, there is  an $\varepsilon\in(0,1)$ such that
\begin{equation}
 B^{-1}(\Delta(0,\delta))\subseteq \bigcup_n \Delta(z_n,\varepsilon). \label{2.2}
 \end{equation}
We first study when two pseudohyperbolic disks
$\Delta(z_n,\varepsilon)$ and
 $\Delta(z_k,\varepsilon)$ have   common points.
 If $\Delta(z_n,\varepsilon)\cap  \Delta(z_k,\varepsilon)$ contains a point, say $z'$, then
 by \cite[p.4, Lemma 1.4]{Ga}
 \begin{equation}d(z_n,z_k)\leq \frac{d(z_n,z')+d(z',z_k)}{1+d(z_n,z')d(z',z_k)}\leq d_\varepsilon, \label{2.3}\end{equation}
 where
 $$d_\varepsilon=\max \{\frac{x+y}{1+xy}:0\leq x,y\leq \varepsilon\}<1.$$
  But by the definition of thin
 Blaschke product, it is clear that
 $$\lim_{n\to\infty} \inf_{ k\neq n} d(z_n,z_k)= 1.$$
Then by (\ref{2.3}), it is not difficult to see that there is a
positive integer $N_0$ such that
 $ \Delta(z_n,\varepsilon)\cap  \Delta(z_k,\varepsilon) (k\neq n) $ is empty whenever $n\geq N_0.$
Therefore, by (\ref{2.2})   the closure  of each component of
$B^{-1}(\Delta(0,\delta))$  is contained in $\mathbb{D}$. Applying
Lemma \ref{22} shows that the restriction of $B$ on each connected
component of $B^{-1}(\Delta(0,\delta))$ is a proper map  onto
$\Delta(0,\delta)$. The proof is complete.
 \end{proof}
 Thin Blaschke products share some good properties. For example,
 Lemma \ref{22} states that the pre-image  of the critical
value set of a  thin Blaschke product is discrete.  Under some mild
condition, the product  of two thin Blaschke products is also a thin
Blaschke product. Moreover, one  has the following,  see \cite[pp.
86,106]{Hof}, \cite[pp. 404,310]{Ga} or
\cite[Lemma 3.2(3)]{GM1}. 
\begin{prop} Let $B$ be a thin Blaschke product, then the followings hold:   \label{24}
\begin{itemize}
\item[$(1)$] Each value in $\mathbb{D}$ can be achieved infinitely many times by $B$.
\item [$(2)$] For  every $ w$ in  $\mathbb{D}$,  $\varphi_w\circ B$ is a thin Blaschke product.
\end{itemize}
\end{prop}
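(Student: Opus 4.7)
The plan is to derive (1) as an immediate consequence of (2): once $\varphi_w\circ B$ is known to be a thin Blaschke product it has infinitely many zeros, and these zeros are precisely the points of $B^{-1}(w)$. Hence the substance of the proposition is (2), which I would prove in three steps: locate the fiber $B^{-1}(w)$, verify the thin-derivative condition at each fiber point, and rule out a singular inner factor in $\varphi_w\circ B$.

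For the first two steps I would reuse the disk machinery already in place. Given $w\in \mathbb{D}$, set $\delta=(1+|w|)/2$ and invoke the Gorkin-Mortini estimate recalled in the proof of Proposition \ref{21}: there is an $\varepsilon\in (0,1)$ with $|B(z)|\geq \delta>|w|$ for $z\notin \bigcup_n\Delta(z_n,\varepsilon)$, and for $n\geq N_0$ the disks $\Delta(z_n,\varepsilon)$ are pairwise disjoint with $B$ univalent on each. By Lemma \ref{22} and the argument of Proposition \ref{23}, the component of $B^{-1}(\Delta(0,\delta))$ containing $z_n$ maps properly and, for $n\geq N_0$, univalently onto $\Delta(0,\delta)$, so it contains a unique $a_n$ with $B(a_n)=w$. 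Setting $g_n=B\circ \varphi_{z_n}$ one has $g_n(0)=0$ and $|g_n'(0)|\to 1$, so by Schwarz's lemma and normal families every subsequence of $\{g_n\}$ has a further subsequence converging locally uniformly on $\mathbb{D}$ to a rotation $R_\alpha(\zeta)=\alpha\zeta$. With $b_n=\varphi_{z_n}(a_n)$ we have $g_n(b_n)=w$ and $|b_n|\to |w|$, and the M\"obius invariance of the Schwarz-Pick factor gives
\[
\frac{(1-|a_n|^2)|B'(a_n)|}{1-|w|^2}=\frac{(1-|b_n|^2)|g_n'(b_n)|}{1-|g_n(b_n)|^2}\longrightarrow 1,
\]
the limit being $1$ because rotations have Schwarz-Pick factor identically $1$. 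Using $|\varphi_w'(w)|=(1-|w|^2)^{-1}$ and the chain rule, this becomes the required thin-derivative condition $(1-|a_n|^2)|(\varphi_w\circ B)'(a_n)|\to 1$.

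The main obstacle is Step 3: upgrading the thin-derivative estimate to the assertion that $\varphi_w\circ B$ is equal, up to a unimodular constant, to the Blaschke product $B_{\{a_n\}}$, i.e., that the singular inner factor $S$ in the factorization $\varphi_w\circ B=c\,B_{\{a_n\}}\,S$ is trivial. From
\[
(1-|a_n|^2)|(\varphi_w\circ B)'(a_n)|=|S(a_n)|\prod_{k\neq n}d(a_k,a_n),
\]
one sees that it suffices to prove the separation product on the right tends to $1$; then $|S(a_n)|\to 1$ along a sequence clustering at the full boundary, which forces $S\equiv 1$. The separation factor is attacked by transferring pseudohyperbolic distances from $\{z_n\}$ to $\{a_n\}$ via the rotation approximation of $g_n$ (rotations being pseudohyperbolic isometries), while the absence of a nontrivial singular factor ultimately rests on the Gleason-part/Hoffman-type results referenced in \cite{Hof, Ga, GM1}, which characterize thin Blaschke products as those inner functions that are ``units'' on each of their nontrivial Gleason parts.
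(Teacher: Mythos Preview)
The paper does not prove Proposition~\ref{24}; it is quoted from \cite{Hof,Ga,GM1} without argument. So there is no in-paper proof to compare against---you are supplying what the paper chose to omit.

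Your Steps~1--2 are correct and yield the thin-derivative condition at the fiber points $a_n$. One simplification: once $(1-|a_n|^2)\,|(\varphi_w\circ B)'(a_n)|\to 1$ is established, the factorization
\[
(1-|a_n|^2)\,|(\varphi_w\circ B)'(a_n)|=|S(a_n)|\prod_{k\neq n}d(a_k,a_n)
\]
has both right-hand factors bounded by $1$, so \emph{both} tend to $1$ automatically. Your proposed separate attack on the separation product via ``transferring pseudohyperbolic distances'' is unnecessary.

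The genuine gap is in your argument for $S\equiv 1$. The claim that $\{a_n\}$ clusters at the \emph{full} boundary is false in general: since $d(a_n,z_n)<\varepsilon$ for $n\geq N_0$, the boundary accumulation set of $\{a_n\}$ coincides with that of $\{z_n\}$, and this can be a single point---indeed the paper itself later exploits thin $B$ with $\overline{Z(B)}\nsupseteq\mathbb{T}$ (see Theorem~\ref{62} and Example~\ref{71}). Even granting full-boundary clustering, the implication ``$|S(a_n)|\to 1$ along such a sequence $\Rightarrow S\equiv 1$'' is not valid without further control: a nontrivial singular inner function can have $|S|\to 1$ along many sequences tending to $\mathbb{T}$, provided they avoid the support of its measure in the right way.

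Your final sentence correctly locates the real content in the Hoffman/Gleason-part theory (or the direct argument in \cite[Lemma~3.2]{GM1}), and that is where the absence of a singular factor is actually established. The honest version of your Step~3 is therefore a citation---which is exactly what the paper does for the entire proposition. The ``full boundary'' sentence should simply be deleted, as it is both unnecessary and incorrect.
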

\section{Representation for operators in $\mathcal{V}^*(B)$}
~~~~In this section, we will apply the method in \cite[section
2]{GH2}   to  give a representation for those operators in $\mathcal{V}^*(B) $, where $B$ is a thin Blaschke product.
By using the techniques of analytic continuation and local inverse
\cite{T1,T2,DSZ},   a sufficient geometric condition is presented  for when $\mathcal{V}^*(B) $ is trivial.

Remind that for a thin Blaschke product or a finite Blaschke product
$B$,   $\mathcal{E}_B$ denotes its   critical value set. As done in
\cite{DSZ},  set $$E= \mathbb{D}-B^{-1}
 (\mathcal{E}_B),$$
 and by Proposition \ref{21}  $E$ is a connected open set.

 For a holomorphic function $f$ on $ \mathbb{D}$, if $\rho $  is a map defined on some subdomain $V$ of $\mathbb{D} $ such that $\rho(V) \subseteq \mathbb{D}$
 and  $f(\rho(z))=f(z),z\in V$, then $\rho $ is called\emph{ a local inverse} of $f$ on $V$\cite{T1,DSZ}.  For example,
 $f(z)=z^n \ ( z\in \mathbb{D})$ and $\rho(z)=\xi z \ ( z\in \mathbb{D})$, where $\xi$ is one of the $n$-th root of unit.
 Then $\rho$ is a local inverse  of $f$ on $\mathbb{D}$.

 Before continuing, let us make an observation. For an interpolating Blaschke product $\phi$, by \cite[p. 395, Lemma 1.4]{Ga} there is an $r\in (0,1)$
 such that $\phi^{-1}(r \mathbb{D})$  equals the disjoint union of domains $V_i(i=0,1,\cdots)$, and for each $i$, $\phi|_{V_i}:V_i\to r \mathbb{D}$
  is biholomorphic. Define  biholomorphic maps $\rho_{i}:V_0\to V_i$ by setting $$\rho_{i}=\phi|_{V_i}^{-1}\circ \phi|_{V_0},$$ and clearly
   $\phi\circ \rho_i |_{V_0}=\phi |_{V_0}.$ That is, $\rho_i$ are local inverses of $\phi$ on $V_0$.
 Now for any fixed point $z_0\in E$, write $ \lambda=B(z_0)$ and consider $\phi=\varphi_\lambda \circ B $.  Proposition \ref{24}
  implies that $\phi$ is an
 interpolating Blaschke product. Then by the above observation, we get the following lemma.
 \begin{lem}Let  $B$ be a thin Blaschke product.
  For each $z_0\in E$, there always exists a neighborhood $V$ of $z_0$ and countably infinity functions $\{\rho_i\}_{i=0}^\infty$ that are
   holomorphic on $V$ such that:  \label{31}
  \begin{itemize}
  \item [1.]  $B^{-1}\circ B(z)=\{\rho_i(z):i=0,1\cdots\} $ holds for each $ z\in V$.
  \item[2.] $\rho_i$ are local inverses of $B$ on $V$, i.e., $B\circ \rho_i(z)=B(z), \ z\in V$;
    \item [3.] Each $\rho_i: V\to \rho_i(V)$ is biholomorphic.
   \end{itemize}
  In this case, we say that $V$ admits a complete local inverse $\{\rho_i\}_{i=0}^\infty$ of $B$.
   \end{lem}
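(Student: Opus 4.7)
The plan is to follow the template sketched in the paragraph immediately preceding the lemma, namely to translate the problem from $B$ to the interpolating Blaschke product $\phi := \varphi_\lambda \circ B$ with $\lambda = B(z_0)$, and then invoke the standard decomposition of an interpolating Blaschke product into ``thin disks'' around its zeros (Garnett, p.~395, Lemma~1.4). By Proposition~\ref{24}(2), $\phi$ is again a thin Blaschke product, so in particular it is interpolating. The condition $z_0\in E$ guarantees that $\lambda\notin \mathcal{E}_B$, hence $B'$ does not vanish on $B^{-1}(\lambda)$, so every zero of $\phi$ is simple; this is what will make each piece of the decomposition map biholomorphically rather than as a proper branched cover.

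First I would apply Garnett's lemma to $\phi$ to produce an $r\in(0,1)$ such that
\[
\phi^{-1}(r\mathbb{D}) = \bigsqcup_{i=0}^{\infty} V_i,
\]
where the $V_i$ are the connected components and each restriction $\phi|_{V_i}\colon V_i\to r\mathbb{D}$ is a biholomorphism (so each $V_i$ contains exactly one, simple, zero of $\phi$, i.e.\ one preimage of $\lambda$ under $B$). Label the components so that $z_0\in V_0$, and put $V := V_0$. Define
\[
\rho_i \;:=\; (\phi|_{V_i})^{-1}\circ \phi|_{V_0} \colon V_0 \longrightarrow V_i, \qquad i=0,1,2,\ldots,
\]
with $\rho_0 = \mathrm{id}_{V_0}$. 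Each $\rho_i$ is a composition of biholomorphisms, which gives property~3 immediately. For property~2, composing with $\phi$ yields $\phi\circ\rho_i = \phi$ on $V$, and since $\varphi_\lambda$ is a Möbius transformation we may cancel it to obtain $B\circ\rho_i = B$ on $V$.

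The only part requiring a brief argument is the exhaustiveness claim in property~1, i.e.\ that $\{\rho_i(z)\}_{i\ge0}$ is \emph{all} of $B^{-1}(B(z))$ for every $z\in V$. Fix such a $z$ and let $z'\in B^{-1}(B(z))$. Then $\phi(z')=\phi(z)\in r\mathbb{D}$, so $z'\in \phi^{-1}(r\mathbb{D})$, hence $z'\in V_j$ for a unique $j$. By construction $\rho_j(z)$ is the unique point of $V_j$ whose image under $\phi|_{V_j}$ equals $\phi(z)=\phi(z')$, so $\rho_j(z)=z'$. Conversely $B(\rho_i(z))=B(z)$ for all $i$, giving the reverse inclusion.

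I do not expect a serious obstacle here: the main technical input is Garnett's decomposition lemma for interpolating Blaschke products, which is cited verbatim, and the simpleness of the zeros of $\phi$ (hence the biholomorphic character of $\phi|_{V_i}$) is exactly the role played by the hypothesis $z_0\in E$. If anything, the most delicate point is to make sure the component decomposition near $z_0$ really accounts for \emph{every} $B$-preimage in $V$ and not merely those in a bounded region; this is handled by working inside $\phi^{-1}(r\mathbb{D})$, since $B^{-1}(B(z))\subseteq \phi^{-1}(\phi(z)) \subseteq \phi^{-1}(r\mathbb{D})$.
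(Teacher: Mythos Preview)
Your proof is correct and follows essentially the same approach as the paper: the paragraph immediately preceding the lemma \emph{is} the paper's proof, and you have reproduced it faithfully while filling in the details the paper leaves implicit (the role of $z_0\in E$ in guaranteeing that all zeros of $\phi$ are simple, and the explicit verification that $\{\rho_i(z)\}$ exhausts $B^{-1}(B(z))$). There is nothing to add.
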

Notice that in Lemma \ref{31}, $\rho_i( V)\cap \rho_j( V)$ is empty
if $i\neq j$. In this paper, $\rho_0$ always denote the identity
map, i.e. $\rho_0(z)=z,z\in V$.

 Using Lemma \ref{31} and the proof of \cite[Theorem 2.1]{GH2}, we will  get a local representation of operators $S$ in $\mathcal{V}^*(B).$ Precisely,
there is a
  vector $\{c_k\}  $  in $l^2 $ satisfying
\begin{equation}
 Sh(z)=\sum_{k=0}^\infty c_k  h\circ \rho_k(z) \rho_k'(z)  , \label{3.1}
 \ h\in L^2_a( \mathbb{D}),\, z\in  V.
\end{equation}
The identity (\ref{3.1}) holds locally on $V$. Below, we will try to
get a global version of (\ref{3.1}).

We need some definitions from \cite[Chapter 16]{Ru1}. In this
paragraph, all functions (such as $f_j,\rho $ and $\sigma$) are
defined on  some subsets of $E$, where $ E= \mathbb{D}-B^{-1}
 (\mathcal{E}_B)$. \emph{A function element} is a  pair $(f,D)$, where
$D$ is an open  disk and $f$ is a holomorphic function on $D$. Two
function elements $(f_0,D_0)$  and $(f_1,D_1)$ are \emph{direct
continuations} if $D_0\cap D_1$ is not empty and $f_0 =f_1 $ holds
on $D_0\cap D_1$. By a \emph{curve} or a path, we mean a continuous
map from $[0,1]$ into  $E$.   By a \emph{loop}, we mean
 a path $\sigma$ satisfying $\sigma(0)=\sigma(1)$. Given a function element $(f_0,D_0)$ and  a curve    $\gamma$ in $E$ with $\gamma(0)\in D_0$,
if there is  a partition of  $[0,1]$:
     $$0=s_0<s_1<\cdots <s_n=1$$
 and function elements $(f_j,D_j)(0\leq j \leq n)$ such that
   \begin{itemize}
  \item [1.]  $(f_j,D_j)$  and $(f_{j+1},D_{j+1})$
are direct continuations for all $j$ with\linebreak $ 0\leq j\leq
n-1  $;
  \item[2.] $\gamma [s_j,s_{j+1}]\subseteq D_j(0\leq j\leq n-1)$ and $\gamma(1)\in D_n$,
       \end{itemize}then
  $(f_n, D_n)$ is called \emph{an analytic
continuation of $(f_0, D_0)$ along   $\gamma$}; and $(f_0, D_0)$
 is said to \emph{admit} an analytic continuation along  $\gamma$. In this case, we write $f_0\thicksim  f_n$. Clearly, this is an equivalence relation and we write
$[f]$ for \emph{the equivalent class} of $f$. In particular, if
$\rho$
 is a local inverse  of $B$ and  $\gamma$ is a curve such that $\gamma(0)\in D(\rho)$, then   $\rho$  always admits an analytic continuation  $\widetilde{\rho}$    along $\gamma$.
 By \cite[Theorem 16.11]{Ru1}, such an  analytic continuation is unique, and one can show that this $\widetilde{\rho}$ is necessarily a local inverse of
 $B$.  Then applying the monodromy theorem  \cite[Chapter 16]{Ru1} shows that each local inverse $\rho $ of
 $B$ can be extended analytically to any simply connected
domain in $E.$ 

 Below, we will seek  a curve $L$
which passes all points in $ B^{-1}(\mathcal{E}_B)$, such that $
\mathbb{D}-L$  is simply connected. In this way, we can give a
global version of (\ref{3.1}).
\begin{lem} Suppose $\mathcal{E}$ is a discrete subset in $ \mathbb{D}$. Then there is a curve $L$  \label{32}
 containing $\mathcal{E}$   such that $  \mathbb{D}-L$  is simply connected.
\end{lem}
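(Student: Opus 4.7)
\noindent\textit{Proof plan.} The plan is to build $L$ as a properly embedded simple arc in $\mathbb{D}$ that visits every point of $\mathcal{E}$ and accumulates on $\partial\mathbb{D}$, and then to deduce simple-connectedness of $\mathbb{D}\setminus L$ from the classical criterion that a connected open subset of the Riemann sphere $S^2=\mathbb{C}\cup\{\infty\}$ is simply connected if and only if its complement in $S^2$ is connected.

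Since $\mathcal{E}$ is discrete in $\mathbb{D}$, it is at most countable; the finite case is handled by an obvious polygonal construction, so assume $\mathcal{E}=\{e_n\}_{n\geq 1}$ is infinite. The only possible accumulation points of $\mathcal{E}$ lie on $\partial\mathbb{D}$, so we may enumerate the $e_n$ so that $|e_n|\to 1$. Build a simple arc $\gamma:[0,1)\to\mathbb{D}$ inductively, with $\gamma(0)=e_1$ and $\gamma(t_n)=e_n$ for an increasing sequence $t_n\nearrow 1$. Having defined $\gamma$ on $[0,t_n]$ as a simple polygonal arc, extend it from $e_n$ to $e_{n+1}$ along a simple polygonal path lying in
\[
\mathbb{D}\setminus\bigl(\gamma([0,t_n))\cup(\mathcal{E}\setminus\{e_n,e_{n+1}\})\bigr).
\]
Such a path exists because a simple arc in the plane never disconnects $\mathbb{R}^2$ (Jordan arc theorem) and $\mathcal{E}$ is discrete, so this set is open and path-connected; a general-position perturbation makes the path simple, piecewise linear, and disjoint from the previous arc except at its initial endpoint $e_n$. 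By constraining each extension to lie in a sufficiently thin annular neighborhood of $\partial\mathbb{D}$ (with thicknesses shrinking to $0$), we can guarantee that $\gamma$ is proper, so $L=\gamma([0,1))$ is closed in $\mathbb{D}$ and its closure in $S^2$ meets $\partial\mathbb{D}$.

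To check simple-connectedness, regard $\mathbb{D}\setminus L$ as an open subset of $S^2$ with complement $K=L\cup(S^2\setminus\mathbb{D})$. Both $L$ (a simple arc) and $S^2\setminus\mathbb{D}$ are connected, and since $|\gamma(t)|\to 1$ the closure of $L$ in $S^2$ meets $\partial\mathbb{D}\subset S^2\setminus\mathbb{D}$; hence $K$ is connected. The classical Riemann-sphere criterion then gives that $\mathbb{D}\setminus L$ is simply connected.

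The main obstacle is the inductive construction of $\gamma$: at each step one must choose a simple polygonal extension that simultaneously dodges the already-built arc and the remaining points of $\mathcal{E}$, while being geometrically constrained enough that the overall map is proper. Once these disjointness and location conditions are arranged, the topological conclusion is a one-line application of the Riemann-sphere criterion.
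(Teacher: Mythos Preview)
Your argument is essentially correct and takes a genuinely different route from the paper. The paper builds $L$ explicitly as a ``snail'': it fixes an increasing sequence of radii $r_n\nearrow 1$ with $\mathcal{E}\subset\bigcup_n r_n\mathbb{T}$, then draws an infinite polygonal spiral that winds outward along the circles $r_n\mathbb{T}$, picking up the finitely many points of $\mathcal{E}$ on each. Simple-connectedness of $\mathbb{D}\setminus L$ is then obtained by exhibiting it as an increasing union $U_1\subset U_2\subset\cdots$ of simply connected open sets. Your approach is more abstract: any properly embedded half-open simple arc through $\mathcal{E}$ will do, and simple-connectedness follows from the Riemann-sphere complement criterion rather than an exhaustion. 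The paper's method is more self-contained (no Jordan-arc or Schoenflies input), while yours is cleaner once that input is granted and makes transparent why the particular spiral shape is irrelevant.

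One small gap: the criterion you invoke reads ``a \emph{connected} open subset of $S^2$ is simply connected iff its complement is connected,'' so you must first know that $\mathbb{D}\setminus L$ is connected. Connectedness of $K=L\cup(S^2\setminus\mathbb{D})$ alone does not give this (two disjoint disks in $S^2$ have connected complement). The fix is short: identify $\mathbb{D}$ homeomorphically with $\mathbb{R}^2$; then your proper embedding $\gamma:[0,1)\to\mathbb{R}^2$ extends continuously to an embedding $\bar\gamma:[0,1]\to S^2=\mathbb{R}^2\cup\{\infty\}$ with $\bar\gamma(1)=\infty$, so $L\cup\{\infty\}$ is a genuine arc in $S^2$. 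The Jordan arc theorem then gives that $S^2\setminus(L\cup\{\infty\})=\mathbb{R}^2\setminus L\cong\mathbb{D}\setminus L$ is connected (indeed homeomorphic to a disk, hence simply connected), and your argument goes through. You might also tighten the inductive step slightly: once you enumerate the $e_n$ with $|e_n|$ nondecreasing, the $n$-th extension can be taken inside the annulus $\{|e_n|\le|z|<1\}$, which both ensures properness and keeps the new segment disjoint from all earlier ones except at its initial point.
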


 \begin{proof}  Let $\mathcal{E}$ be a discrete set in $ \mathbb{D}$.
 Without loss of generality, we assume that $\mathcal{E}$
 consists of countably infinite points.  Since  $\mathcal{E}$ is discrete, then there is a strictly increasing sequence $\{r_n\}$
  satisfying  $$\lim_{n\to\infty} r_n=1  \ \mathrm{and }\  \mathcal{E} \subseteq  \bigcup  r_n\mathbb{T}.$$
Notice that each circle $r_n\mathbb{T}$ contains only finite points
in $\mathcal{E}$, and then we can draw an infinite
polygon $L$ passing along these $r_n\mathbb{T}$ and through all points in $\mathcal{E}\cap ( r_n\mathbb{T})$. 
 Since  $L$ tends to $\mathbb{T}$ and has no end point on $\mathbb{T}$, $ \mathbb{D}-L$ looks like
a snail, see Figure 1. 
    Thus, it is easy to construct a sequence of simply connected open sets $U_n$ whose union is the snail $ \mathbb{D}-L$ and
$$U_n\subseteq U_{n+1}, \ n=1,2,\cdots.$$Therefore, $ \mathbb{D}-L$  is simply connected.
\end{proof}
 In the proof of Theorem \ref{32}, one sees that  the curve $L$ consists of at most countably
many arcs and $L$ is relatively closed in $ \mathbb{D}$. Here, by an
arc we mean a subset in $\mathbb{C}$ that is $C^1$-homeomorphic to
some segment.
\vskip 2mm
By Proposition \ref{21} and Lemma \ref{32}, there is a curve $L$
 containing $B^{-1}(\mathcal{E}_B)$   such that $  \mathbb{D}-L$  is simply connected. 
 Notice that all $\rho_k$  are  local inverse of $B$, and the monodromy theorem
  shows that all $\rho_k$ are holomorphic in $\mathbb{D}-L$.
 For each $w$ in $  \mathbb{D}-L$,  define
 $$ \pi_w(h)=\{h\circ\rho_k(w)\rho_k'(w)\},\,  h\in L_a^2(\mathbb{D}).$$
 As done in \cite{GH2}, one gets $ \pi_w(h)\in l^2$. Furthermore,  we have the following.
 \begin{thm}Suppose that $B$ is a thin Blaschke product. If $S$ is a  unitary operator on
the Bergman space which commutes with $M_B$, then there is a unique
operator $W:l^2 \to l^2$ such that
 \begin{equation}
 W\pi_w(h)=\pi_w(Sh),\ h\in L_a^2(\mathbb{D}),\ w\in \mathbb{D}-L. \label{3.2}
  \end{equation}
  This $W$ is necessarily a unitary  operator. Moreover, there is a unique
  vector $\{c_k\}  $  in $l^2 $ satisfying         \label{33}
\begin{equation}
 Sh(z)=\sum_{k=0}^\infty c_k  h\circ \rho_k(z) \rho_k'(z), \label{3.3}
 \ h\in L^2_a( \mathbb{D}),\, z\in \mathbb{D}-L .
 \end{equation}
where all holomorphic functions $\rho_k$ satisfy $B\circ
\rho_k|_{\mathbb{D}-L}=B$, and the right side of (\ref{3.3})
converges uniformly on compact subsets of $\mathbb{D}-L .$
\end{thm}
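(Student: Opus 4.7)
The plan is to start from the local representation furnished by \cite[Theorem 2.1]{GH2} applied via Lemma \ref{31}, and then promote it to a global representation by analytic continuation across the simply connected region $\mathbb{D}-L$. Since $S$ is unitary and commutes with $M_B$, it automatically commutes with $M_B^*$, so $S\in \mathcal{V}^*(B)$. Fixing a base point $z_0\in E$ together with a neighborhood $V$ admitting a complete local-inverse enumeration $\{\rho_i\}_{i=0}^\infty$, the argument of \cite[Section 2]{GH2} produces a sequence $\{c_k\}\in l^2$ for which (3.1) holds on $V$. By Proposition \ref{21} the set $B^{-1}(\mathcal{E}_B)$ is discrete, so Lemma \ref{32} supplies a curve $L$ containing it with $\mathbb{D}-L$ simply connected. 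Each $\rho_k$, being a local inverse of $B$ on $E$, admits analytic continuation along any path in $E$, so by the monodromy theorem each extends to a single-valued holomorphic function on $\mathbb{D}-L$ satisfying $B\circ\rho_k=B$. Then I would propagate the equality (3.1) along paths in $\mathbb{D}-L$: both sides are holomorphic in $z$ for fixed $h$, so agreement on the open set $V$ forces agreement throughout the connected set $\mathbb{D}-L$, with the same coefficient sequence $\{c_k\}$. Uniform convergence on compact subsets would follow from Cauchy--Schwarz applied to $\{c_k\}\in l^2$ together with the pointwise $l^2$-bound on $\{h\circ\rho_k(w)\rho_k'(w)\}$ noted just before the theorem.

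To produce $W$, I would read off from (3.3) a uniform action on the vectors $\pi_w(h)$ via the prescription $W\pi_w(h)=\pi_w(Sh)$, computing the entries of $\pi_w(Sh)=\{Sh\circ\rho_k(w)\rho_k'(w)\}$ by substituting (3.3) and using the groupoid composition rule $\rho_j\circ\rho_k=\rho_{\sigma(j,k)}$ for the local inverses. The key point is that the index permutations $\sigma(j,\cdot)$ are defined by local holomorphic identities and are therefore locally constant on $\mathbb{D}-L$; simple-connectivity of $\mathbb{D}-L$ forces them to be globally constant, which makes $W$ a single $w$-independent operator on the span of $\{\pi_w(h):w\in\mathbb{D}-L,\,h\in L_a^2(\mathbb{D})\}$. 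Density of this span in $l^2$ (which follows from a reproducing-kernel computation relating $\langle\pi_w(h_1),\pi_w(h_2)\rangle_{l^2}$ to the Bergman inner product, as in \cite{GH2}) then lets $W$ extend continuously and makes it unique; unitarity of $W$ drops out from the same identification together with the unitarity of $S$. Uniqueness of $\{c_k\}$ follows from the density argument as well.

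The delicate step I expect is the global self-consistency of $W$: ensuring that the combinatorics of the enumeration $\{\rho_k\}$ and the composition indices $\sigma(j,k)$ transport coherently across all of $\mathbb{D}-L$, rather than only along short arcs where (3.1) was initially derived. The monodromy theorem provides the bridge once $L$ is in place, but one still has to check carefully that the infinite series in (3.3) behaves as an operator identity, not merely a holomorphic coincidence on a small patch. Balancing the $l^2$-structure of $\{c_k\}$ against the pointwise $l^2$-structure of $\pi_w(h)$ to obtain absolute and locally uniform convergence, while simultaneously matching the algebraic action of $W$ at every $w$, is where the heart of the argument lies.
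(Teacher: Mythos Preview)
Your proposal is correct and follows essentially the same route as the paper, which itself defers to \cite[Section 2]{GH2}: local representation on $V$, monodromy extension of the $\rho_k$ to the simply connected region $\mathbb{D}-L$, propagation of (3.1) by the identity principle, and construction of $W$ from the composition combinatorics of the local inverses. The one place where the paper sharpens your sketch is the uniqueness of $W$: the reproducing-kernel identity you invoke yields an isometry relation between $\langle\pi_w(h_1),\pi_w(h_2)\rangle_{l^2}$ and a Bergman inner product, but not by itself the \emph{density} of $\{\pi_w(h)\}$ in $l^2$; the paper secures this directly by observing that for each $w_0\in E$ the sequence $\{\rho_i(w_0)\}$ is interpolating (Proposition \ref{24}), so the values $h(\rho_i(w_0))$ can be prescribed arbitrarily in the appropriate weighted $l^2$, which forces $\pi_{w_0}$ to have dense range and pins down $W$.
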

\noindent The proof is similar as that in  \cite[Section 2]{GH2}.
For the uniqueness of the operator $W$, just notice that for each
$w_0 $ in  $E $,  the zero sequence $\{\rho_i(w_0)\}$ of $ B-B(w_0)
$  is an interpolating sequence, see
 \cite[Lemma 3.2(3)]{GM1} or Proposition \ref{24}. Then following the proof in \cite[Section 2]{GH2} gives the the uniqueness of   $W$.

\vskip1.5mm In fact, for a   finite  Blaschke product $B$, such an
operator $W$ in (\ref{3.2}) was   obtained by R.~Douglas, S.~Sun and
D.~Zheng in \cite{DSZ}. Motivated by a result
 in \cite{DSZ},  we can show that  $W$ has a very   restricted form.
 To see this, let $V $ be chosen as in Lemma \ref{31}  and
 put $\mathcal{E} =B^{-1}(\mathcal{E}_B)$ in Lemma \ref{32}. Since $V $ can be shrunk such that $\overline{\rho_i(V)}\cap \overline{\rho_j(V)} $
 are empty whenever $i\neq j$, we may carefully choose a curve $L$ with all
  $L\cap \rho_i(V )$ being empty.

    Notice that each  $\rho_k$ maps $\rho_j(V )$
   biholomorphically onto some $\rho_i(V )$, and hence $\rho_k\circ \rho_j|_{V }$ makes sense:
  locally $\rho_k\circ \rho_j $ equals some $\rho_i$.
Now   consider the ($n+1)$-th row of both sides of (\ref{3.2}). Let
$\{d_k\}\in l^2$  be  the $(n+1)$-th row of $W$, then by
(\ref{3.2}),
\begin{equation}
Sh(\rho_{n}(w))\rho_{n}'(w)= \sum_{k=0}^\infty  d_k h(\rho_k
(w))\rho_k'(w), \ h\in L^2_a( \mathbb{D})         \label{3.4} \
\mathrm{and}\ w\in  V .
\end{equation}
By the above discussion, there is a permutation $\pi_n$ such that
   \begin{equation}
   \rho_k \circ \rho_{n}|_{ V } = \rho_{\pi_n(k)} |_ {V },k=1,2,\cdots,        \label{3.5}
   \end{equation}
   and hence
    \begin{equation}
   \rho_k' \circ \rho_{n}\, \rho_{n}'|_ {V } = \rho_{\pi_n(k)}' |_ {V }.        \label{3.6}
   \end{equation}

Combining (\ref{3.4}), (\ref{3.5}) with (\ref{3.6}) shows that
 for each $h\in L^2_a( \mathbb{D})$,
$$Sh(\rho_{n}(w))\rho_{n}'(w)=
\sum_{k=0}^\infty  d_{\pi_n (k)} h( \rho_k \circ \rho_{n}(w))\rho_k'
\circ \rho_{n}(w) \rho_{n}'(w),\ w\in   V .$$ Therefore,
$$Sh(\rho_{n}(w))=
\sum_{k=0}^\infty  d_{\pi_n(k)} h( \rho_k \circ \rho_{n}(w))\rho_k'
\circ \rho_{n}(w) ,\ w\in  V ,$$ and hence on $\mathbb{D}-L,$
$$Sh(z)=
    \sum_{k=0}^\infty d_{\pi_n (k)} h\circ \rho_k(z) \rho_k'(z).$$
Therefore  by (\ref{3.3})   \begin{equation*}
    \sum_{k=0}^\infty d_{\pi_n (k)} h\circ \rho_k(z) \rho_k'(z)=
    \sum_{k=0}^\infty c_k h\circ \rho_k(z)\rho_k'(z),\ h\in L^2_a( \mathbb{D}),
    \, z\in \mathbb{D}-L.
    \end{equation*}
 By uniqueness of the coefficients, we get
    $$d_{\pi_n (k)}= c_k,k=1,2,\cdots .$$
    Put $\sigma_n=\pi_n ^{-1}(n\geq 1)$, and then $d_{k}= c_{\sigma_n(k)},k=1,2,\cdots .$
Therefore, we get the form of $W$ as follows.
 \begin{thm}   The infinite unitary  matrix  $W$ in Theorem \ref{33} has a very restricted form. Precisely,
   there is a unit vector $\{c_k\}\in l^2$  and permutations   $\sigma_j(j\geq 1)$  of positive integers
   such that
       \[
   W=\left(\begin{array}{ccccc}
   c_1 & c_2 & \cdots & c_n & \cdots  \\
   c_{\sigma_1(1)} & c_{\sigma_1(2)} & \cdots & c_{\sigma_1(n)} & \cdots \\ 
   \vdots & \vdots &  \ddots & \vdots  & \vdots \\
  c_{\sigma_m(1)} & c_{\sigma_m(2)} & \cdots & c_{\sigma_m(n)} & \cdots   \\
  \vdots & \vdots & \ddots & \vdots & \ddots  \\
   \end{array}\right).
   \]
 \end{thm}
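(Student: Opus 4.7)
The plan is to exploit the group-like composition structure of local inverses of $B$ to convert the representation in Theorem \ref{33} into a constraint on each row of $W$. Fix $z_0\in E$ and let $V$ be a neighborhood of $z_0$ admitting a complete local inverse $\{\rho_i\}_{i=0}^\infty$ of $B$ as in Lemma \ref{31}; shrink $V$ if necessary so that all $\rho_i(V)$ are pairwise disjoint and avoid the exceptional curve $L$ of Lemma \ref{32}. Denote by $\{d_k\}_{k=0}^\infty$ the row of $W$ corresponding to index $n$ (so the first row, $n=0$, is $\{c_k\}$, since $\rho_0=\mathrm{id}$).

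First, I would read off the defining identity (\ref{3.2}) coordinate by coordinate at height $n$: for every $h\in L_a^2(\mathbb{D})$ and $w\in V$,
\[
Sh(\rho_n(w))\,\rho_n'(w)=\sum_{k=0}^\infty d_k\, h(\rho_k(w))\,\rho_k'(w).
\]
On the left, apply the representation (\ref{3.3}) at the point $z=\rho_n(w)\in\mathbb{D}-L$ to obtain
\[
Sh(\rho_n(w))\,\rho_n'(w)=\rho_n'(w)\sum_{k=0}^\infty c_k\,h(\rho_k\circ\rho_n(w))\,\rho_k'(\rho_n(w)).
\]
The central observation is that each composition $\rho_k\circ\rho_n$ is again a local inverse of $B$ on $V$, because $B\circ(\rho_k\circ\rho_n)=(B\circ\rho_k)\circ\rho_n=B\circ\rho_n=B$; and since $\rho_n$ is biholomorphic onto $\rho_n(V)$, the rule $\rho_k\circ\rho_n=\rho_{\pi_n(k)}$ defines a bijection $\pi_n$ of the index set. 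Combined with the chain rule $\rho_k'(\rho_n(w))\,\rho_n'(w)=\rho_{\pi_n(k)}'(w)$, the left-hand side rewrites as
\[
\sum_{k=0}^\infty c_k\,h(\rho_{\pi_n(k)}(w))\,\rho_{\pi_n(k)}'(w)=\sum_{j=0}^\infty c_{\sigma_n(j)}\,h(\rho_j(w))\,\rho_j'(w),
\]
where $\sigma_n=\pi_n^{-1}$ and the last equality is just a reindexing $j=\pi_n(k)$.

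To finish, I would match this expression against the right-hand side of the coordinate identity and invoke uniqueness of coefficients in such series expansions to conclude $d_j=c_{\sigma_n(j)}$ for every $j$. Setting $\sigma_0=\mathrm{id}$ for the first row then yields exactly the asserted matrix form for $W$. I expect the only substantial point to be this uniqueness step: it relies on the fact that for $w_0\in V\subset E$ the fiber $\{\rho_j(w_0)\}$ is precisely the zero sequence of the interpolating Blaschke product $\varphi_{B(w_0)}\circ B$, by Proposition \ref{24} together with the remarks that justified Theorem \ref{33}, so that the dual functionals used there separate the coefficients. Everything else is bookkeeping with the composition rule and the chain rule for local inverses; the groupoid identity $\rho_k\circ\rho_n=\rho_{\pi_n(k)}$ is what forces the rows of $W$ to be permutations of one common sequence, which is the content of the theorem.
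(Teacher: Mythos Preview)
Your proposal is correct and follows essentially the same approach as the paper: read off the $n$-th row of $W$ from (\ref{3.2}), use the composition rule $\rho_k\circ\rho_n=\rho_{\pi_n(k)}$ together with the chain rule to relate this row to the first-row series (\ref{3.3}), and then invoke uniqueness of coefficients via the interpolating property of $\{\rho_j(w_0)\}$. The only cosmetic difference is that the paper reindexes the right-hand side of the row identity while you expand the left-hand side using (\ref{3.3}); both lead to the same comparison $d_j=c_{\sigma_n(j)}$ with $\sigma_n=\pi_n^{-1}$.
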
\
Below,Theorem \ref{33}  will be used to give a geometric characterization for when $\mathcal{V}^*(B)$ is trivial.
 First, we have the following result. When $B$ is a finite Blaschke product, it
is given by \cite[Lemma 5.1]{DSZ}.
\begin{lem} Suppose that $B$ is a thin Blaschke product, and $S $ is an operator in  $\mathcal{V}^*(B)$ having
 the   form as in (\ref{3.3}):\label{35}
 $$ Sh(z)=\sum_{k=0}^\infty c_k  h\circ \rho_k(z) \rho_k'(z),
 \ h\in L^2_a( \mathbb{D}),\, z\in \mathbb{D}-L .$$
 Then $c_i=c_j$ if $ [\rho_i]= [\rho_j]$. In particular, if $c_i\neq 0$, then $$\sharp \{k\in \mathbb{Z}_+;\
 [\rho_k]=[\rho_i]\}<\infty.$$
 \end{lem}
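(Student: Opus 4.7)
The plan is to follow the analytic continuation strategy used in \cite[Lemma 5.1]{DSZ} for finite Blaschke products, adapted to the infinite setting. By definition of the equivalence relation, $[\rho_i]=[\rho_j]$ means there is a path $\gamma$ in $E=\mathbb{D}-B^{-1}(\mathcal{E}_B)$ along which $\rho_i$ continues analytically to $\rho_j$. After shrinking $V$ so that both endpoints lie in $V$, I would view $\gamma$ as a loop based at some $w_0\in V$ and use the monodromy theorem on $E$ (as already invoked just above in the paper) to continue the whole family $\{\rho_k\}$ simultaneously along $\gamma$. Since the $\rho_k(w_0)$ are exactly the points of the fiber $B^{-1}(B(w_0))$, such a continuation simply permutes indices, producing a permutation $\tau$ with $\tau(i)=j$.

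The decisive step is to exploit single-valuedness of $Sh$. Starting from
$$Sh(z)=\sum_{k=0}^\infty c_k\, h\circ\rho_k(z)\,\rho_k'(z) \qquad (z\in V),$$
I would continue both sides analytically along $\gamma$. The left side, being a globally defined holomorphic function on $\mathbb{D}$, returns to itself; on the right, each term transforms into $c_k\, h\circ\rho_{\tau(k)}\,\rho_{\tau(k)}'$. Reindexing with $\ell=\tau(k)$ yields
$$Sh(z)=\sum_{\ell=0}^\infty c_{\tau^{-1}(\ell)}\, h\circ\rho_\ell(z)\,\rho_\ell'(z)$$
on a neighborhood of $w_0$. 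The uniqueness of such an expansion, already established in Theorem \ref{33} via the interpolating nature of the fiber $\{\rho_k(w_0)\}$ (Proposition \ref{24} and \cite[Lemma 3.2(3)]{GM1}), forces $c_\ell=c_{\tau^{-1}(\ell)}$ for every $\ell$. In particular, $c_i=c_j$.

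For the ``in particular'' clause, the $\ell^2$ constraint on $\{c_k\}$ (supplied by Theorem \ref{33}) finishes the argument: if $c_i\neq 0$ and the class $[\rho_i]$ contained infinitely many $\rho_k$, the first part would force all the corresponding $c_k$ to equal the nonzero constant $c_i$, contradicting $\sum_k|c_k|^2<\infty$.

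The main obstacle I expect is rigorously justifying the termwise continuation of the infinite sum along $\gamma$, since the original series is only guaranteed to converge locally uniformly on $\mathbb{D}-L$ and $\gamma$ may cross $L$. Rather than continuing the series as a single entity, I would cover $\gamma$ by finitely many small open sets each admitting a complete local inverse $\{\rho_k\}$ in the sense of Lemma \ref{31}, write $Sh$ on each such patch as a sum of the form \eqref{3.1} with some $\ell^2$ coefficient vector, and use single-valuedness of $Sh$ together with the uniqueness of the coefficients in each patch to patch these representations consistently and read off the permutation action on indices. This reduces the global continuation question to a finite chain of local uniqueness statements, each an instance already treated in the construction leading to Theorem \ref{33}.
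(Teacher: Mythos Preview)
Your approach is correct and is essentially the paper's own argument: continue the local representation of $Sh$ along a loop $\gamma$ in $E$ that carries $\rho_i$ to $\rho_j$, use single-valuedness of $Sh$, and compare the two resulting expansions at the base point. The only cosmetic difference is in the final extraction step: the paper plugs in a specific bounded holomorphic $f$ satisfying $f(\rho_{j'}(z_0))=1$ and $f(\rho_k(z_0))=0$ for $k\neq j'$ (available because $\{\rho_k(z_0)\}$ is interpolating) to read off $c_j=c_{j'}$ directly, whereas you invoke the uniqueness of the $\ell^2$ coefficient vector from Theorem~\ref{33}; these are two sides of the same coin, since that uniqueness is itself proved via the interpolating property. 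Your closing paragraph about covering $\gamma$ by finitely many patches admitting complete local inverses and chaining the local representations (\ref{3.1}) is precisely how the paper justifies that the sum ``makes sense for each $z\in\gamma$ and is locally holomorphic,'' so there is no gap there.
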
\noindent Remind that for each local inverse $\rho$ of $B$,   $[\rho]$ denotes
  the equivalent class of all analytic continuation $\sigma$ of $\rho$ with $D(\sigma)\subseteq E$.
\begin{proof} Suppose that $S $ is an operator in  $\mathcal{V}^*(B)$ given by
$$ Sh(z)=\sum_{k=0}^\infty c_k  h\circ \rho_k(z) \rho_k'(z),
 \ h\in L^2_a( \mathbb{D}),\, z\in \mathbb{D}-L ,$$
 where $\{c_k\} \in  l^2$.
 Since $\{c_k\}$ is in $l^2$, to prove Lemma \ref{35} it is enough to show that \emph{if $\rho_{j}$ and $\rho_{j'}$  are equivalent, i.e. $
 [\rho_{j}]=[\rho_{j'}]$, then $c_j=c_{j'}.$}
 To see this, we will use the technique of analytic continuation. Fix a point $z_0$ in $\mathbb{D}-L $ and  assume that
  $\rho_{j}$ and $\rho_{j'}$  are equivalent. Then there is a loop $\gamma $ in $E$ based at $z_0$ such that
 $\rho_{j'}$ is exactly  the   analytic continuation of $\rho_j$ along  $\gamma$. By (\ref{3.1}),
for each $f\in L_a^2(\mathbb{D})$, the sum
 $$\sum_{k=0}^\infty c_k  f\circ \rho_k(z) \rho_k' (z)$$ make sense for each $z\in \gamma $ and it is locally holomorphic.
 As done in the proof of  \cite[Lemma 5.1]{DSZ}, suppose that along the loop $\gamma$, each $\rho$ in $\{\rho_i\}_{i=0}^\infty$
  is extended analytically to a holomorphic function  denoted by
  $\widehat{\rho}$, that is defined on some neighborhood of $z_0.$
  In particular,   $\rho_{j}$ is extended analytically to   $\rho_{j'} \equiv \widehat{\rho_{j}}$. We have
\begin{equation}
\sum_{k=0}^\infty c_k  f\circ \rho_k(z_0) \rho_k' (z_0)=\sum_{k=0}^\infty \label{a37}
c_k  f\circ \widehat{\rho_k}(z_0) \widehat{\rho_k}' (z_0).
\end{equation}
Since $\{\rho_k(z_0)\}$ is an interpolating sequence, we may choose
a bounded holomorphic function $f$ over $\mathbb{D}$ such that
$$f(\rho_{j'}(z_0))=1 \quad \mathrm{and } \quad f(\rho_k(z_0))=0,\ k\neq j'.$$
Since $\rho_{j'}=\widehat{\rho_j},$ then (\ref{a37}) gives $c_j=c_{j'},$ as desired.
 \end{proof}

 In this paper,  denote by $D(f) $ the domain of definition for   any function $f$. Now define $ G[\rho]$ to
 be the union of all graphs $G(\sigma)$ of members $\sigma$ in the equivalent class $[\rho]$, with $D(\sigma)\subseteq E$; that is,
  $$G[\rho] =\bigcup_{\sigma \in [\rho]} \{(z,\sigma(z)):z\in D(\sigma)\subseteq E\}.$$

  Clearly, $G[\rho]$ is a subset of $E\times E$. Notice that $G[\rho]$ is a Riemann surface since
it contains only points with the  form $(z,\sigma(z))$, where
$\sigma$ are local inverses defined on some neighborhood of some
$z_0$. Also, observe that for any   local inverses $\rho$ and $\sigma $ of
$B$, $ G[\rho]=G[\sigma] $ if and only if $
[\rho]= [\sigma] $; and  either $G[\rho]=G[\sigma]$ or $G[\rho]\cap G[\sigma]$ is
empty. Following \cite{DSZ} and  \cite{GSZZ}, we define
$$S_B\triangleq \{(z,w)\in\mathbb{D}^2; z\in E \,\ \mathrm{and} \,\  B(z)=B(w)\},$$
equipped with the topology inherited from the natural topology of
$\mathbb{D}^2.$
 It is not difficult to see that $S_B$\emph{ is a Riemann surface}  whose components are $G[\rho]$, where $\rho$ are local inverses of $B$.
In this paper, a Riemann surface means a  complex manifold of complex  dimension $1$,
  not necessarily connected.
  The Riemann surface $G[z]=\{(z,z):z\in E\}$ is called \emph{the identity component}, or \emph{the trivial component} of $S_B$.

Define $\pi_1$ and $ \pi_2$ by setting
 $\pi_1(z,w)=z$  and $ \pi_2 (z,w) = w$  for all\linebreak $(z,w)\in \mathbb{C}^2$. Rewrite $\pi_\rho $ for  the restriction  $\pi_1: G[\rho]\to E $.
By the discussion below Lemma \ref{31}, for any local inverse $\rho$
of $B$ and any curve $\gamma \in E$, there must exist a unique
analytic continuation  $\widetilde{\rho}$  of $\rho$ along $\gamma$.
This implies that $\pi_\rho$ is onto.
 From this observation,  for each $z\in E$  put
  $$[\rho](z)=\{w: (z,w)\in G[\rho]\},$$
  which is nonempty.  Furthermore, by Lemma \ref{31} one can show the following.
  \begin{lem}For any  local inverse $\rho$ of $B$,    \label{36} $\pi_\rho :G[\rho]\to E $ is a covering map.
 \end{lem}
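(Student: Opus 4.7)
The plan is to use Lemma \ref{31} directly to construct a local trivialization of $\pi_\rho$ over any point of $E$. Fix $z_0\in E$ and, shrinking if necessary, pick a neighborhood $V\ni z_0$ admitting a complete family of local inverses $\{\rho_i\}_{i=0}^\infty$ of $B$ so that the images $\rho_i(V)$ are pairwise disjoint. Let
\[
I=\{i\in\mathbb{Z}_+ : [\rho_i|_V]=[\rho]\},
\]
the set of those indices whose $V$-branch lies in the given equivalence class. The claim I want to prove is
\[
\pi_\rho^{-1}(V)=\bigsqcup_{i\in I}G_i,\qquad G_i\triangleq\{(z,\rho_i(z)):z\in V\},
\]
after which covering-map local triviality is immediate: the sheets $G_i$ are open in $G[\rho]$ (since the topology is inherited from $E\times E$), they are pairwise disjoint because $\rho_i(V)\cap\rho_j(V)=\emptyset$ for $i\neq j$, and on each sheet $\pi_\rho$ is the obvious homeomorphism $(z,\rho_i(z))\mapsto z$ with continuous inverse $z\mapsto(z,\rho_i(z))$. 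Surjectivity of $\pi_\rho$ is already noted in the paragraph above the lemma (every curve in $E$ starting at a point of $D(\rho)$ carries $\rho$ to an analytic continuation).

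The inclusion $\supseteq$ is immediate from the definition of $I$. For $\subseteq$, take $(z,w)\in G[\rho]$ with $z\in V$; by definition there is some $\sigma\in[\rho]$ with $\sigma(z)=w$. Since $B(w)=B(z)$ and $B^{-1}(B(z))=\{\rho_i(z)\}_{i=0}^\infty$, we must have $w=\rho_k(z)$ for exactly one index $k$. Now both $\sigma$ and $\rho_k|_V$ are local inverses of $B$ taking the value $w$ at $z$; because $B$ is locally biholomorphic on $E$, a local inverse is determined by its value at a single point, so $\sigma$ and $\rho_k|_V$ coincide on a neighborhood of $z$ contained in $E$. This shows $\rho_k|_V$ is a direct analytic continuation of a restriction of $\sigma$, so $[\rho_k|_V]=[\sigma]=[\rho]$ and $k\in I$; moreover $(z,w)=(z,\rho_k(z))\in G_k$.

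The main (small) subtlety is making sure that the relation $[\rho_k|_V]=[\rho]$ can really be read off from the value $\rho_k(z_0)$ alone, i.e.\ that distinct sheets over $V$ are genuinely in different equivalence classes and matching sheets are genuinely in the same class. Both directions use only the fact that on the open set $E$ the map $B$ is locally biholomorphic, so that two local inverses agreeing at one point of $E$ agree on a full neighborhood, combined with the monodromy-style remark (just above Lemma \ref{32}) that any local inverse admits a unique analytic continuation along any curve in $E$. Once this identification is in place, the decomposition displayed above gives the required even covering, and the lemma follows.
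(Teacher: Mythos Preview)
Your proof is correct and follows exactly the approach the paper indicates: the paper does not give a detailed argument but simply says ``by Lemma \ref{31} one can show the following,'' and your construction of the local trivialization $\pi_\rho^{-1}(V)=\bigsqcup_{i\in I}G_i$ from a complete family of local inverses is precisely the routine verification being alluded to. The only point worth noting is that you have spelled out the identification step (two local inverses agreeing at a point of $E$ agree on a neighborhood) more carefully than the paper does, which is helpful but not a different method.
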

 A standard result in topology says that for a covering map $p:Y\to X$ with $X$ arc-connected, the number $\sharp p^{-1}(x)$ does not depend on the choice of $x\in X$,
 called the multiplicity of $p$, or the number of sheets of $p$.
  Therefore, $\sharp (\pi_\rho)^{-1}(z)=\sharp [\rho](z)$,
 is either a constant integer or $\infty$, called \emph{the multiplicity} of   $G[\rho]$, or \emph{the number of sheets}
  of $G[\rho].$ For simplicity, we rewrite $\sharp [\rho]$ for $\sharp [\rho](z)$ to emphasize that it does
  not depend on $z$.

Combining Theorem \ref{33} with Lemmas 3.5 and 3.6 yields the
following.
   \begin{prop} For  a thin Blaschke product $B$,  if there is no nontrivial component $G[\rho]$ of $S_B$ such that  \label{37}
$\sharp [\rho]<\infty,$ then $\mathcal{V}^*(B) $ is trivial.
Equivalently, $M_B$ has no nontrivial reducing subspace.
\end{prop}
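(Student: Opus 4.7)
The plan is to show that every unitary operator in $\mathcal{V}^*(B)$ must be a scalar multiple of the identity. Since $\mathcal{V}^*(B)$ is a von Neumann algebra, it is the norm-closed linear span of its unitaries, so this would force $\mathcal{V}^*(B)=\mathbb{C}I$, and therefore $M_B$ admits no nontrivial reducing subspace.

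Let $S$ be a unitary in $\mathcal{V}^*(B)$; in particular $S$ commutes with $M_B$, so Theorem \ref{33} produces a unique sequence $\{c_k\}\in\ell^2$ with
$$Sh(z)=\sum_{k=0}^\infty c_k\,h\circ\rho_k(z)\,\rho_k'(z),\qquad h\in L_a^2(\mathbb{D}),\ z\in\mathbb{D}-L,$$
where $\rho_0$ is the identity and $\{\rho_k\}_{k\geq 0}$ is a complete list of local inverses of $B$. The goal is to show $c_k=0$ for every $k\geq 1$, because then $Sh=c_0 h$ identifies $S$ with a scalar. By Lemma \ref{35}, $c_k\neq 0$ would force $\sharp\{j\geq 0:[\rho_j]=[\rho_k]\}<\infty$, so the heart of the proof is to identify this counting quantity with the sheet number $\sharp[\rho_k]$ of the component $G[\rho_k]$ of $S_B$, and then invoke the hypothesis.

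To make that identification, I would fix a point $z_0\in\mathbb{D}-L$ and use Lemma \ref{31} to note that $\{\rho_j(z_0)\}_{j\geq 0}$ enumerates the fiber $B^{-1}(B(z_0))$ bijectively. By definition $[\rho_k](z_0)=\{w:(z_0,w)\in G[\rho_k]\}$, and by Lemma \ref{36} this set has cardinality exactly $\sharp[\rho_k]$. Uniqueness of analytic continuation along curves in $E$ shows that each $w\in[\rho_k](z_0)$ equals $\rho_j(z_0)$ for precisely one $j$, and that for this $j$ one has $[\rho_j]=[\rho_k]$. This yields $\sharp\{j:[\rho_j]=[\rho_k]\}=\sharp[\rho_k]$, so a nonzero $c_k$ with $k\geq 1$ would produce a nontrivial component $G[\rho_k]$ with $\sharp[\rho_k]<\infty$, contradicting the hypothesis.

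The main obstacle, as indicated above, is the bookkeeping that identifies the finite-set counting supplied by Lemma \ref{35} with the geometric sheet number of the associated Riemann-surface component; this is the bridge carrying the hypothesis about $S_B$ into an analytic statement about the coefficients $c_k$. Once that identification is justified via the uniqueness of analytic continuation and the covering-space description in Lemma \ref{36}, the proposition follows without further work.
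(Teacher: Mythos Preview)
Your proposal is correct and follows exactly the route the paper indicates: the paper simply states that the proposition follows by ``combining Theorem \ref{33} with Lemmas 3.5 and 3.6,'' and you have correctly unpacked this by reducing to unitaries, applying the representation of Theorem \ref{33}, and then bridging the finiteness conclusion of Lemma \ref{35} to the sheet number $\sharp[\rho_k]$ via the covering-map description of Lemma \ref{36}. The identification $\sharp\{j:[\rho_j]=[\rho_k]\}=\sharp[\rho_k]$ is the one step the paper leaves implicit, and your justification of it (using that the $\rho_j(z_0)$ bijectively enumerate the fiber and that a local inverse near $z_0$ is determined by its value there) is sound.
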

  The existence of   thin Blaschke products as above will be established  in Section 5.
   Also, one will see later that
   the sufficient condition for  $\mathcal{V}^*(B) $ being trivial  in Proposition \ref{37} is necessary under a mild setting, see Section 6.

   \begin{rem} All definitions in this section, such as local inverse, analytic continuation,
   also can be carried  to   finite  Blaschke products $\phi$. However, since
   $\sharp \phi^{-1}(\phi(z))=\deg \phi<\infty$ for each $z\in E$, any component $G[\rho]$ of $S_\phi$ must
   have finite multiplicity where $\rho$ is a local inverse of $\phi$.
\end{rem}

\section{Many $M_B$ have no nontrivial reducing subspace}
~~~~In this section,  it is shown that under a mild condition,
  $M_B$ has no nontrivial reducing subspace for a thin Blaschke product $B$. Also, we show that for a finite Blaschke product  $\phi$, in
many cases $M_\phi$ has exactly two  minimal reducing subspaces, and
thus $\mathcal{V}^*(\phi)$ is abelian. Precisely, our main result is
stated as follows.
\begin{thm} Suppose that $B$ is a thin Blaschke product, and there is a finite subset $F$ of $Z(B')$ such that:  \label{41}
\begin{itemize}
\item[(i)] $B|_{Z(B')-F} $ is injective;
\item[(ii)]  Each point in $ Z(B')-F$ is a simple zero of $B'$.
 \end{itemize}
 Then $M_B$ has no nontrivial reducing subspace, i.e. $M_B$ is irreducible. In this case, $\mathcal{V}^*(B)$ is trivial.
\end{thm}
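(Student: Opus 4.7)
The plan is to apply Proposition \ref{37}: it suffices to show that under conditions (i) and (ii), every nontrivial local inverse $\rho$ of $B$ has infinite multiplicity $\sharp[\rho]$. I argue by contradiction, assuming $\rho$ is a nontrivial local inverse with $n:=\sharp[\rho]<\infty$, and extracting a factorization of $B$ that violates (i).

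By Lemma \ref{36}, $\pi_\rho: G[\rho]\to E$ is an $n$-fold holomorphic covering. Closing $G[\rho]$ in $\mathbb{D}^2$ yields a proper holomorphic branched cover $\pi_1: \overline{G[\rho]}\to\mathbb{D}$ of degree $n$. I first analyze its ramification over $Z(B')-F$. For $z_0\in Z(B')-F$, condition (ii) makes $B$ locally $2$-to-$1$ at $z_0$, and condition (i) makes $z_0$ the unique critical preimage of $w_0:=B(z_0)$, so $B^{-1}(w_0)=\{z_0\}\cup\{z_0^{(j)}\}_j$ with each $z_0^{(j)}$ non-critical. The local components of $S_B$ accumulating over $\{z_0\}\times\mathbb{D}$ comprise the swap component $G[\tau_{z_0}]$ (approaching $(z_0,z_0)$ with ramification index $2$) and unramified one-sheeted graphs approaching $(z_0,z_0^{(j)})$ for each $j$. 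Hence $\pi_1$ ramifies over $z_0$ precisely when $[\tau_{z_0}]=[\rho]$, always with index $2$.

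I next claim $\overline{G[\rho]}\cong\mathbb{D}$ and $\pi_1$ is a finite Blaschke product $\psi$ of degree $n$. The fundamental group of $E=\mathbb{D}-B^{-1}(\mathcal{E}_B)$ is generated by small loops around the removed points; each such loop lifts in $G[\rho]$ to a path that closes up (after iteration by the local monodromy) into a loop traversing the added branch point in $\overline{G[\rho]}$, and that loop contracts in $\overline{G[\rho]}$. Hence $\overline{G[\rho]}$ is simply connected, and as a bounded subset of $\mathbb{D}^2$ it is biholomorphic to $\mathbb{D}$ by uniformization. Since $\pi_1$ is then a proper holomorphic self-map of $\mathbb{D}$ of degree $n$, it is a finite Blaschke product $\psi$. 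From $B\circ\pi_1=B\circ\pi_2$ on $\overline{G[\rho]}$, the function $B\circ\pi_2$ is constant on each fiber of $\psi$ and descends to a bounded holomorphic map $\widetilde B:\mathbb{D}\to\mathbb{D}$ satisfying $B=\widetilde B\circ\psi$.

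Finally, I close the argument. Since $B$ has infinitely many critical points (per the discussion after Proposition \ref{21}) and $Z(B')=Z(\psi')\cup\psi^{-1}(Z(\widetilde B'))$ with $Z(\psi')$ finite, $Z(\widetilde B')$ must be infinite. For each $\zeta\in Z(\widetilde B')$, the fiber $\psi^{-1}(\zeta)$ consists of $n$ critical points of $B$, all mapping under $B$ to the single value $\widetilde B(\zeta)$. Since $F$ is finite while there are infinitely many such fibers, we may choose $\zeta$ with $\psi^{-1}(\zeta)\cap F=\emptyset$, producing $n\geq 2$ distinct critical points in $Z(B')-F$ sharing the same $B$-value, contradicting the injectivity in condition (i). The main technical obstacle is the simple-connectedness of $\overline{G[\rho]}$, which underpins the uniformization step; once it is secured, the descent of $B$ through $\psi$ and the resulting contradiction with (i) follow cleanly.
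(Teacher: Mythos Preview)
Your strategy diverges sharply from the paper's (which uses the ``gluable'' property of thin Blaschke products, Lemma~\ref{48}, and a tree argument on the fiber $B^{-1}(B(z_0))$), and unfortunately the central factorization step does not go through.

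The fatal gap is the claim $B=\widetilde B\circ\psi$. You identify $\overline{G[\rho]}\cong\mathbb{D}$ and set $\psi=\pi_1$; then you observe that $B\circ\pi_2$ is constant on each $\pi_1$-fiber and descends to a map $\widetilde B$ on the \emph{target} of $\psi$, which is the \emph{original} disk. But for $p=(z,w)\in\pi_1^{-1}(z)$ one has $B\circ\pi_2(p)=B(w)=B(z)$, so the descended map is literally $\widetilde B=B$. The identity you obtain is $B\circ\pi_2=B\circ\pi_1$, which is just the defining relation of $G[\rho]$, not a factorization of $B$ through a nontrivial finite Blaschke product. To get a genuine factorization $B=\widetilde B\circ\psi$ with $\deg\psi\ge 2$ one needs a \emph{block system} for the monodromy on $B^{-1}(B(z_0))$, equivalently a union of components of $S_B$ (containing the diagonal) that is closed under composition and inverse; a single finite-multiplicity component $G[\rho]$ does not provide this in general. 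Without the factorization, your final contradiction with condition~(i) collapses.

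Two further issues compound this. First, the argument for simple-connectedness of $\overline{G[\rho]}$ is insufficient: a finite branched cover of $\mathbb{D}$ need not be simply connected (the degree-two cover of $\mathbb{D}$ branched over two points is an annulus), so showing that lifts of small loops contract is not enough---you must control all of $\pi_1(G[\rho])$, not merely the image of $\pi_1(E)$. Second, the case $n=\sharp[\rho]=1$ is not handled: your closing step needs $n\ge 2$ points in a single $\psi$-fiber, but $\sharp[\rho]=1$ does occur for nontrivial $\rho$ (e.g.\ $B=B_1(z^2)$, $\rho(z)=-z$). The paper avoids all of these difficulties by never attempting a factorization; instead it builds a tree on the fiber $B^{-1}(B(z_0))$ using the $*$-gluing of Definition~\ref{43} and Corollary~\ref{46}, and shows via Lemma~\ref{48} that deletion of any vertex leaves only infinite pieces, whence every nontrivial component of $S_B$ has infinite multiplicity and Proposition~\ref{37} applies.
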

It seems that  either (i) or (ii) rarely fails for a thin Blaschke
product $B$. If $B$ has the form $B=B_1\circ B_2$ for two Blaschke
products $B_1, B_2$ and $2\leq \deg B_2<\infty $, then (i) fails. In
this case,    $\mathcal{V}^*(B)  $ is nontrivial since
$\mathcal{V}^*(B) \supseteq \mathcal{V}^*(B_2),$ see Example
\ref{6.7}.

In what follows, a notion called
\emph{gluable}  will be introduced, which proves to play an important role in the
proof of Theorem \ref{41}.
 Some lemmas  will be also established.

Now given a holomorphic function $\psi$ defined on
$\mathbb{D}$, set
 $$H \triangleq \psi^{-1} \circ \psi\big(Z(\psi')\big) ,$$
 i.e. $$H=  \{z\in \mathbb{D};\  \mathrm{there \ is \ a}
\,  w\  \mathrm{such  \  that} \  \psi(w)=\psi(z)  \ \mathrm{ and} \
\psi'(w)=0    \}.$$ We assume that $H$ is relatively   closed in
$\mathbb{D}$, and thus its complement set $E\triangleq \mathbb{D}-H$
is connected. Special interest is focused on the case when
$\psi=B$ is a thin (or  finite) Blaschke product. In this case,
Proposition \ref{21} says that  $H$ is a discrete subset of
$\mathbb{D}$,
  and hence
 the open set $E$ is connected.
\begin{Def}Given a function $\psi$ as above and any two  points $z_1, z_2 \in E $
with $\psi(z_1)=\psi(z_2) (z_1\neq z_2)$, if there are two paths
$\sigma_1$ and $\sigma_2$ satisfying the following    \label{43}
\begin{itemize}
\item[(i)]$ \sigma_i(0)=z_i  $ for $i=1,2$,  $ \sigma_1(1)= \sigma_2(1) $. 
\item[(ii)]$ \psi(\sigma_1(t))=\psi(\sigma_2(t)), 0\leq t \leq 1$;
\item[(iii)]  For $0 \leq  t <1$, $\sigma_i(t)\in E,i=1,2$,
 \end{itemize}
 then  $z_1$ and $z_2$ are called directly $\psi$-glued, denoted by $z_1 \stackrel{\psi}\simeq z_2$.
  In addition, we make the convention that $z_1$ is  directly $\psi$-glued to itself,
 i.e.  $z_1 \stackrel{\psi}\simeq z_1$.\end{Def}
\begin{rem}
In Definition \ref{43}, the condition (i)  implies that  \linebreak
$\psi'(\sigma_1(1))=0 $. To see this, assume conversely that
$\psi'(\sigma_1(1))\neq 0.$ Let $\rho$  be a local inverse of $\psi$
defined on a neighborhood of $\sigma_1(0)$ and
$\rho(\sigma_1(0))=\sigma_2(0)$. Since $\psi'(\sigma_1(1))\neq 0,$  by (ii) and (iii)  the continuous
map $$(t,\sigma_1(t)) \mapsto \sigma_2(t) , \, t\in [0,1]$$
naturally induces an analytic continuation $ \widetilde{\rho}$ of
$\rho$ along $\sigma_1$, satisfying  \linebreak
$\widetilde{\rho}(\sigma_1(1))=\sigma_2(1)$. Since    $\sigma_1(1)
=\sigma_2(1)$,  locally  $\widetilde{\rho}$ equals the identity map.
By the uniqueness of analytic continuation,
 $\rho$ is also the identity map, forcing $\sigma_1(0) =\sigma_2(0)$, i.e.
$z_1=z_2$. This is a contradiction. Therefore, $\psi'(
\sigma_1(1))=0$.

\end{rem}
 We also need the following.
\begin{Def}   For a subdomain $\Omega$ of $\mathbb{D}$, if there is a finite sequence $ z_0, z_1,\cdots,
 z_k $ in $\Omega$, such that $$z_j \stackrel{\psi}\simeq z_{j+1}, \,\ 0\leq j \leq k-1,$$
     and all related paths $\sigma_i$ in Definition \ref{43}
  can be chosen in $\Omega$,                                   \label{44}
 then $z_0$ and $z_k$ are called $\psi$-glued on $\Omega$, denoted by $z_0\stackrel{\psi|_\Omega }\sim z_k$;
 in short, $z_0\stackrel{ \Omega } \sim z_k$ or $z_0\sim z_k$. If $w \stackrel{ \Omega } \sim z$ for all
  $z\in \psi^{-1}\circ \psi(w)\cap \Omega$, then  $\Omega$ is called $\psi$-glued with respect to $w $.
   In particular, if $\mathbb{D}$ is
  $\psi$-glued with respect to $w$, then   $\psi $ is called gluable with respect to $w$.
 \end{Def}
 \noindent If   $\psi $ is gluable with respect to $w$ for all $w\in
E$, then $\psi $ is called \emph{gluable}. \vskip2mm

By Definition \ref{43}, each univalent function $\psi$ is
gluable. For more examples, write  $B(z)=z^n(n\geq 2)$. Then
the only critical point of $B$ is $0,$  and $E=\mathbb{D}-\{0\}$.
For two different points $z_1, z_2 \in E  $ with $B(z_1)=B(z_2)$,
define $\sigma_i(t)=(1-t)z_i$, $i=1,2$. Then it is easy to check
conditions (i)-(iii) in Definition \ref{43} hold, and thus
$z_1 \stackrel{B}\simeq z_2$. Therefore, $B$ is gluable.  %
In general, we have the following.
\begin{prop}  Each finite Blaschke product $B$ is  gluable. \label{45}
\end{prop}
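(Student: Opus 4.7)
The plan is to regard $B|_E : E \to \mathbb{D}\setminus \mathcal{E}_B$ as a finite unbranched covering, where $E = \mathbb{D}\setminus B^{-1}(\mathcal{E}_B)$. Since $B$ is a finite Blaschke product of degree $n=\deg B$, the critical value set $\mathcal{E}_B$ is finite, $B^{-1}(\mathcal{E}_B)$ is a finite subset of $\mathbb{D}$, and $E$ is connected. Fix any $w_0\in E$; then the fiber $F = B^{-1}(w_0)$ consists of $n$ points, all lying in $E$. It suffices to show that any two points of $F$ are related by the transitive closure of direct $B$-gluing on $\mathbb{D}$.

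First I would carry out a local construction at each critical point. Fix $c\in\mathbb{D}$ with $B(c)=v$ and local degree $k\ge 2$. Choosing a local biholomorphic chart $\phi$ near $c$ so that $B = v + \phi^k$, for any $w_1$ sufficiently close to $v$ the $k$ preimages of $w_1$ near $c$ are $z_j=\phi^{-1}((w_1-v)^{1/k}\omega_j)$ for $\omega_j$ running through the $k$-th roots of unity. The paths $\sigma_j(t)=\phi^{-1}((1-t)(w_1-v)^{1/k}\omega_j)$ satisfy $B\circ\sigma_j(t)=v+(1-t)^k(w_1-v)$ independently of $j$, end at $c$, and lie in $E$ for $t<1$ provided $w_1$ is close enough to $v$ that the projected segment $B\circ\sigma_j$ avoids the remaining critical values. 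Hence any two of the $z_j$'s are directly $B$-glued at $c$ in the sense of Definition~\ref{43}.

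Next I would lift this construction back to $F$. Given $v \in \mathcal{E}_B$, pick $w_1$ near $v$ and a path $\alpha$ in $\mathbb{D}\setminus\mathcal{E}_B$ from $w_0$ to $w_1$, and let $\mu$ be a small loop based at $w_1$ winding once around $v$; set $\gamma_v = \alpha\cdot\mu\cdot\alpha^{-1}$. Two elements $z_a,z_b\in F$ lie in a common cycle of the $\gamma_v$-monodromy precisely when the $B|_E$-lifts of $\alpha$ starting at $z_a,z_b$ terminate at two points $z'_a,z'_b$ in the same local branch above $v$, i.e.\ clustering around a common critical preimage $c\in B^{-1}(v)$. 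Concatenating these $\alpha$-lifts with the local paths $\sigma_a,\sigma_b$ from the preceding paragraph produces two paths in $\mathbb{D}$ starting at $z_a,z_b$, staying in $E$ for $t<1$, meeting at $c$ at time $t=1$, and projecting to a common path in $\mathbb{D}$; this establishes $z_a \stackrel{B}{\simeq} z_b$ directly on $\mathbb{D}$.

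Finally, the monodromy action of $\pi_1(\mathbb{D}\setminus \mathcal{E}_B, w_0)$ on $F$ is transitive because $E$ is path-connected, and since $\mathbb{D}$ is simply connected this fundamental group is generated by the loops $\gamma_v$, $v\in\mathcal{E}_B$. A standard permutation-group fact says that for a group generated by elements $g_1,\dots,g_r$, the orbits coincide with the equivalence classes of the transitive closure of the relation ``lying in a common cycle of some $g_i$''. Combined with the previous step this forces the transitive closure of direct $B$-gluing on $F$ to be the total relation, so $B$ is gluable with respect to $w_0$; as $w_0\in E$ was arbitrary, $B$ is gluable. The main obstacle is the third step: carefully concatenating the lift of $\alpha$ with the correct local gluing path and verifying conditions (ii)--(iii) of Definition~\ref{43} globally rather than just near $c$.
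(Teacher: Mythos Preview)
Your argument is correct and takes a genuinely different route from the paper's. The paper proceeds by a direct geometric induction: after normalizing so that $B(z_0)=0$, it studies the components $\mathcal{J}_{i,r}$ of $\{|B|<r\}$, each containing a single preimage $z_i$ for small $r$. As $r$ increases there is a first value $t$ at which two closures $\overline{\mathcal{J}_{i,t}}$ and $\overline{\mathcal{J}_{i',t}}$ meet; the contact point $w$ is a critical point, and lifting a simple arc in the base from $0$ to $B(w)$ yields two paths satisfying Definition~\ref{43}, so $z_i$ and $z_{i'}$ become directly $B$-glued. This strictly decreases the number of $B$-glued components, and a finite iteration finishes the proof.

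Your approach instead regards $B|_E$ as a finite covering of $\mathbb{D}\setminus\mathcal{E}_B$, uses the local normal form $B=v+\phi^k$ to glue the $k$ nearby sheets at each critical point, transports this back to the reference fiber $F$ along a chosen path $\alpha$, and concludes with the permutation-group fact that a $\langle g_1,\dots,g_r\rangle$-orbit equals the transitive closure of the relation ``share a cycle of some $g_i$.'' This is more conceptual and isolates the content of the proposition as ``gluability $=$ transitivity of monodromy, read cycle by cycle.'' The paper's sublevel-set argument, by contrast, is more elementary and avoids covering-space language entirely; it is also the picture that feeds directly into the thin case (Corollary~\ref{46}), where one exhausts $\mathbb{D}$ by relatively compact components $\Omega_t$ on which $B$ restricts to (a conformal copy of) a finite Blaschke product. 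Your proof plugs into that extension just as well, once one restricts to such an $\Omega_t$.
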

\noindent Here, we do not assign a point $z_0\in E$, with respect to
which $B$ is gluable. Below,   one will see that the
 gluable property of $B$ does not depend on the choice of $z_0$
in $E$. Remind that $E=\mathbb{D}-B^{-1}(\mathcal{E}_B).$
\begin{proof}Given a   $z_0\in E$, we must show that $B$ is gluable with respect to  $z_0.$ Since for any M$\ddot{o}$bius map $\varphi$,
the  gluable property of $\varphi\circ B$ is equivalent to that of
$B$ (with respect to  $z_0$), then we may assume that $B(z_0)=0$.
Write $\deg B=n+1$ and
 $$B^{-1}\circ B(z_0) =\{z_0,z_1,\cdots,z_n \}.$$
For each $r\in (0,1)$,  consider the components $\mathcal{J}_{i,r}$
of $\{z:|B(z)|<r\}$ containing some $z_i$ $(0\leq i\leq n)$.
Clearly, for   enough small $r>0$, those $\mathcal{J}_{i,r}$ are
pairwise disjoint. Also, we have
$$\{z:|B(z)|<r\}= \bigsqcup_{i=0}^n \mathcal{J}_{i,r}.$$
Since each $\mathcal{J}_{i,r}$ contains exactly one $z_i$, it must
be $B$-glued. Below, we will enlarge $r$ and make induction on
the number of $B$-glued components of $\{z:|B(z)|<r\}$.

By standard analysis, one can show that there exist a $t(0<t<1)$
satisfying the following:
\begin{itemize}
\item[(i)] for any $r$ with $0<r<t $,   the components $\mathcal{J}_{i,r}$  are pairwise disjoint;
\item[(ii)]  there are at least two distinct  $i$ and $i'$ such that $\overline{\mathcal{J}_{i,t}} \cup \overline{\mathcal{J}_{i',t}}$  is connected.
 \end{itemize}
Now we get $m(m<n+1)$ components of $\cup_{i=0}^n \overline{\mathcal{J}_{i,t}}$, and denote  by $\eta$ the
 minimal distance between any two components among them.
  Write $\varepsilon=\frac{\eta}{2}$ and there is a constant $\delta >0$ such that
  $$\{z\in \mathbb{D}:|B(z)|<t+\delta \}\subseteq \cup_i O(\mathcal{J}_{i,t},\varepsilon),$$
  where   $$O(\mathcal{J}_{i,t},\varepsilon) \triangleq\{w\in \mathbb{D}:\,  \mathrm{there\ \  is\ \  a\ \  point}\ \
   z\in \mathcal{J}_{i,t} \ \ \mathrm{such\ \  that}\ \ |z-w|<\varepsilon\}.$$ 
  Then there are  exactly $m$ components of $\cup_i \mathcal{J}_{i,t+\delta},$  and we deduce that
  \vskip1.5mm \emph{if $\Omega$  is a
  component of $\cup_i \mathcal{J}_{i,t+\delta} $ contains $z_j$, then  $\Omega$ is $B$-glued with respect to $z_j$.}
\vskip2mm
  To see this, notice that for any $s(0<s<1),$ $\mathcal{J}_{i,s}$ is simply connected,  and its boundary
   $\partial \mathcal{J}_{i,s}$ must be a   Jordan curve
 satisfying $$\partial \mathcal{J}_{i,s}\subseteq \{z:|B(z)|=s\}.$$
 Then by (ii) there is at least one point $w\in \overline{\mathcal{J}_{i,t}} \cap \overline{\mathcal{J}_{i',t}},$
  see Figure 2. 
 Let $\Omega$ denote the component  of $\cup_i \mathcal{J}_{i,t+\delta} $  that contains $w$.
 Since $$B(E)=\mathbb{D}-\mathcal{E}_B,$$   then by  Proposition \ref{21} $B(E)$ is connected.
  Thus we can pick a simple curve   $\sigma$ such that $\sigma-\{B(w)\}\subseteq B(E)\cap t\mathbb{D}$,
 and $\sigma$  connects $B(z_0)$ with $B(w)$.
 Since $B^{-1}(\sigma-\{B(w)\})$ consists of disjoint arcs,
 there exists at least one curve $\gamma_i$ in $\overline{\mathcal{J}_{i,t}}$ and another $\gamma_{i'}$ in $\overline{\mathcal{J}_{i',t}}$
 such that $$B(\gamma_i)=B(\gamma_{i'})=\sigma ,$$
 where  $\gamma_i(1)=\gamma_{i'}(1)=w$ and
 $$\gamma_i(0)=z_i \ \ \mathrm{and} \ \  \gamma_{i'}(0)=z_{i'}.$$ 
 The above shows that $\Omega$ is  $B$-glued with respect to $z_i$.
 Similarly, any other component of  $\cup_i \mathcal{J}_{i,t+\delta} $ are also $B$-glued, as desired.

 Noticing that $m<n+1$,   we have reduced    the number of those   $B$-glued components.
 The next step is to find some $t'$ such that (i) and (ii) holds, and etc.
After finite steps, we come to the case of $m=2$; and   repeating
the
   above procedure  shows  that there is  some $r\in (0,1)$  such that  $\{z: |B(z)|<r\}$ is itself $B$-glued with respect to  $z_0$; and thus
  $\mathbb{D} $ is also  $B$-glued. In another word, $B$ is gluable. The proof is complete.
\end{proof}
\begin{cor}  Each thin Blaschke product is gluable. \label{46}
\end{cor}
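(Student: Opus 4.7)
The plan is to reduce the corollary to Proposition \ref{45} by restricting $B$ to a judiciously chosen relatively compact subdomain of $\mathbb{D}$ on which $B$ behaves like a finite Blaschke product. Fix $w\in E$ and any $z\in B^{-1}(B(w))$. By Proposition \ref{24}(2), $\varphi_{B(w)}\circ B$ is again a thin Blaschke product, and its gluable property with respect to $w$ is manifestly equivalent to that of $B$; so I may assume $B(w)=0$, i.e., both $w$ and $z$ are zeros of $B$.

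Next I would pick a simple path $\gamma$ from $w$ to $z$ inside $\mathbb{D}$ and set $R=\max_t|B(\gamma(t))|<1$. For any $\delta\in(R,1)$, the Lawton--Garnett statement quoted in the proof of Proposition \ref{21} supplies an $\varepsilon\in(0,1)$ with $\{|B|<\delta\}\subseteq\bigcup_n\Delta(z_n,\varepsilon)$, where the pseudohyperbolic disks $\Delta(z_n,\varepsilon)$ are pairwise disjoint for all $n\geq N_0$. Let $\Omega$ be the connected component of $\{|B|<\delta\}$ that contains $\gamma$. Since $\Omega$ is connected and meets the compact curve $\gamma$, it can intersect only finitely many of the disks $\Delta(z_n,\varepsilon)$, so $\Omega$ is contained in a finite union of them; in particular $\overline{\Omega}\subseteq\mathbb{D}$. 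By Lemma \ref{22}, $B|_{\Omega}:\Omega\to\delta\mathbb{D}$ is then a proper holomorphic map of some finite degree $d$, and the zeros of $B$ inside $\Omega$ form a finite set $\{w=z_{i_1},\dots,z_{i_m}=z\}$.

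Finally I would transplant the argument of Proposition \ref{45} to $\Omega$: exhaust $\Omega$ by the components of $\{|B|<s\}$ for $s\nearrow\delta$, start with small $s$ where these components are disjoint neighborhoods of the $z_{i_j}$, and at each critical value of $|B|$ where two components first touch, use the shared critical point on the common boundary as the common endpoint that directly $B$-glues a zero from one component to a zero from the other. All gluing paths so obtained lie in $\Omega\subseteq\mathbb{D}$, yielding a chain of directly $B$-glued pairs that connects $w$ to $z$ in $\mathbb{D}$. Since $z\in B^{-1}(B(w))$ was arbitrary and $w\in E$ was arbitrary, $B$ is gluable. The main obstacle is the last step: one must verify that the topological ingredients of Proposition \ref{45}'s proof (Jordan-curve boundaries and simply connectedness of the components appearing during the merging process) really transfer to $\Omega$. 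The cleanest route is to show that $\Omega$ is itself simply connected, via a Riemann--Hurwitz count for $B|_\Omega$ combined with induction on the merging stages, and then conformally map $\Omega$ onto $\mathbb{D}$ so that Proposition \ref{45} applies verbatim to the resulting finite Blaschke product $\delta^{-1}B\circ\psi$.
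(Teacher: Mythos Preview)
Your approach is essentially the paper's: restrict $B$ to a relatively compact component $\Omega$ of a sublevel set $\{|B|<\delta\}$, show $\Omega$ is simply connected, conformally map it to $\mathbb{D}$, and apply Proposition~\ref{45} to the resulting finite Blaschke product $\delta^{-1}B\circ f$. The paper resolves your acknowledged obstacle more directly than Riemann--Hurwitz: simple connectedness of $\Omega$ follows from the maximum modulus principle (any Jordan curve in $\Omega$ has $|B|<\delta$ on it, hence on its interior, so that interior lies in $\Omega$), and choosing $\delta$ to avoid the discrete set $\{|B(z)|:B'(z)=0\}$ makes $\partial\Omega$ an analytic Jordan curve so the Riemann map extends to the closure.
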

\begin{proof} Assume that $B$ is a thin Blaschke product.
Fix a point $z_0\in E$ and  write
$$B^{-1}(B(z_0))=\{z_n:n=0,1,\cdots\}.$$  We will
show that $z_0\sim z_k $ for each given $k.$

To see this, set $r=\max \{|z_i|:0\leq i \leq k\}$ and put $$t'=\max
\{|B(z)|:|z|\leq r\}.$$ By   Proposition \ref{21}, $\{B(z):z\in
Z(B')\}$ is a discrete subset of $\mathbb{D}$, and then one can take
a number $t$ such that $$t\not\in \{|B(z)|:z\in Z(B')\}\quad \mathrm{and}
\quad t'<t<1.$$ Let $\Omega_t$ denote the component of $\{z:
|B(z)|<t\}$ containing $r\mathbb{D}$.
By  the maximum modulus principle,
it is not difficult to see that $\Omega_t$ is a simply-connected
domain; and by the choice of $t$, the boundary of $\Omega_t$ is
analytic, and hence a Jordan curve. Then by Riemann's mapping
theorem, there is a biholomorphic map $f:\mathbb{D}\to  \Omega_t$,
which extends to a homeomorphism from  $\overline{\mathbb{D}}$ onto
$\overline{\Omega_t}$.

By Lemma \ref{22}, $B|_{\Omega_t} :\Omega_t \to t\mathbb{D} $ is a
proper map, and then $B\circ f: \mathbb{D} \to t \mathbb{D}$ is also
a proper map. By \cite[Theorem 7.3.3]{Ru2}, there is a finite
Blaschke product $B_0$ such that $B\circ f= tB_0$. Then by
Proposition \ref{45} $B_0$ is gluable on $\mathbb{D}$, then so
is $B\circ f$. By the    biholomorphicity of $f$, it follows that
$\Omega_t$ is $B$-glued with respect to $z_0.$ Since $
\{z_0,\cdots,z_k\}\subseteq \Omega_t,$ $z_0\sim z_j$ for $j=0,
\cdots,k.$ Therefore, $$z_0\stackrel{\Omega_t }\sim z_j, j=0,
\cdots,k,$$ and hence $$z_0\stackrel{\mathbb{D} }\sim z_k.$$
By  arbitrariness of  $k,$    $\mathbb{D}$ is $B$-glued  with respect to $z_0$.
 By the arbitrariness of $z_0$, $B$ is gluable.   The proof is complete.
\end{proof}
In what follows, Bochner's theorem\cite{Wa1,Wa2} will be useful.
  For a holomorphic function $\psi$,
if $\psi'(w_0)=0$, then $w_0$ is called \emph{a critical point} of $\psi$.
\begin{thm}[Bochner] If $B$ is a finite Blaschke product with degree $n$, then the critical set $Z(B')$ of $B$ is contained \label{47}
in $\mathbb{D}$, and $B$ has exactly $n-1$ critical points, counting
multiplicity.
\end{thm}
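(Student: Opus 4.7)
The plan is to establish Bochner's theorem through a direct application of the argument principle to $B'$ on the unit circle. Since $B$ is a finite Blaschke product of degree $n$ with zeros $a_1,\ldots,a_n\in\mathbb{D}$, it extends as a rational function on $\hat{\mathbb{C}}$ whose poles $1/\bar a_j$ all lie outside $\overline{\mathbb{D}}$; consequently both $B$ and $B'$ are holomorphic on a neighborhood of $\overline{\mathbb{D}}$, so $Z(B')$ is automatically finite. The task is therefore twofold: rule out critical points on $\mathbb{T}$ (giving $Z(B')\subseteq\mathbb{D}$), and count those in $\mathbb{D}$.

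First I would analyze $B$ on $\mathbb{T}$. Since $|B(e^{i\theta})|=1$ I may write $B(e^{i\theta})=e^{i\phi(\theta)}$ for a smooth real function $\phi$, and the topological degree of $B|_{\mathbb{T}}$ forces $\phi(2\pi)-\phi(0)=2\pi n$. The key quantitative input is the pointwise inequality $\phi'(\theta)>0$, which I would extract from the logarithmic derivative identity
\[
\frac{zB'(z)}{B(z)}=\sum_{j=1}^n\frac{(1-|a_j|^2)z}{(z-a_j)(1-\bar a_j z)},
\]
whose restriction to $|z|=1$ simplifies (using $|1-\bar a_j z|^2=(z-a_j)(\bar z-\bar a_j)$ after multiplying and dividing by $\bar z$) to the strictly positive real sum $\sum_j(1-|a_j|^2)/|1-\bar a_j z|^2$. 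Differentiating $B(e^{i\theta})=e^{i\phi(\theta)}$ gives
\[
B'(e^{i\theta})=\phi'(\theta)\,e^{i(\phi(\theta)-\theta)},
\]
so $\phi'>0$ yields both $\phi'(\theta)=\operatorname{Re}\frac{zB'(z)}{B(z)}>0$ and $B'(e^{i\theta})\neq 0$ on $\mathbb{T}$. This proves $Z(B')\subseteq\mathbb{D}$.

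Next I would apply the argument principle: since $B'$ is holomorphic on a neighborhood of $\overline{\mathbb{D}}$ and nowhere zero on $\mathbb{T}$, the number of zeros of $B'$ in $\mathbb{D}$ (counted with multiplicity) equals the winding number of $\theta\mapsto B'(e^{i\theta})$ around the origin. From the formula above, this winding number is the total change of $(\phi(\theta)-\theta)/2\pi$ over $[0,2\pi]$, which is $(2\pi n-2\pi)/2\pi=n-1$. This delivers the count of critical points.

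The principal technical point is establishing $\phi'>0$; a clean route is the logarithmic derivative computation just sketched, which simultaneously (i) proves non-vanishing of $B'$ on $\mathbb{T}$, (ii) certifies monotonicity of $\phi$ and hence uninterrupted tracking of the argument, and (iii) makes the winding number computation transparent. If one prefers a more geometric treatment, an alternative plan would invoke Riemann--Hurwitz for $B:\hat{\mathbb{C}}\to\hat{\mathbb{C}}$, which gives a total ramification of $2n-2$, together with the Schwarz reflection symmetry $B\circ\sigma=\sigma\circ B$ for $\sigma(z)=1/\bar z$ fixing $\mathbb{T}$ pointwise, to split the ramification evenly between $\mathbb{D}$ and $\hat{\mathbb{C}}\setminus\overline{\mathbb{D}}$; but this route costs an extra verification that no critical point sits on $\mathbb{T}$, so the argument principle path above is the most economical.
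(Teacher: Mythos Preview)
Your proof is correct and complete. The logarithmic-derivative computation on $\mathbb{T}$ is clean, the identity $B'(e^{i\theta})=\phi'(\theta)e^{i(\phi(\theta)-\theta)}$ is derived correctly, and the winding-number count via the argument principle is valid because $B'$ is holomorphic on a neighborhood of $\overline{\mathbb{D}}$ and nonvanishing on $\mathbb{T}$.

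As for comparison with the paper: the paper does \emph{not} prove this statement. It is quoted as a classical result (``Bochner's theorem'') with references to Walsh \cite{Wa1,Wa2}, and is used as a black box throughout Sections~4 and~5. So your write-up goes strictly beyond what the paper provides. Your argument-principle route is the standard elementary proof and is well suited here; the Riemann--Hurwitz alternative you mention would also work but, as you note, still needs the $\mathbb{T}$-nonvanishing step, so it offers no economy.
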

Using Bochner's theorem, one can give the following.
\begin{rem}  For a thin Blaschke product $B$, $Z(B')$ is always an infinitely set.  To see this,  the proof of Proposition \ref{21} shows that for
each integer  $n$ there is a component $\Omega_r$ of $\{|B(z)|:|z|< r\}$
satisfying
$$z_j\in \Omega_r,\, j=0,\cdots,n,$$
and $\overline{\Omega_r} \subseteq \mathbb{D}.$ Applying Lemma \ref{22} shows that $B|_{\Omega_r}: \Omega_r \to r \mathbb{D}$ is a
$k$-folds $(k\geq n+1)$ proper map (for the definition ``$k$-folds", see the 2nd paragraph in Section 2). By the proof of
 Corollary \ref{46}, we may regard $B|_{\Omega_r}$ as a finite
Blaschke product with degree $k$. Then by Bochner's theorem, $(B|_{\Omega_r})' $ has exactly $k-1(k-1\geq n)$ zero points,
counting multiplicity. By the arbitrariness of $n$, $B'$ has infinitely many zeros on $\mathbb{D}$. Also, this gives a picture
describing how  a thin Blaschke product $B$ is obtained by ``gluing" finite Blaschke products.\end{rem}
 Below,  $B$ denotes a thin or finite
Blaschke product.  For any component  $G[\rho]$ of $S_B$, if
$(z_0,w_0)\in G[\rho]$, then \emph{we also write $ [(z_0,w_0)]  $
for $[\rho] $
 and $G[(z_0,w_0)]$  for $G[\rho]$.}

  The following indicates the relation between the gluable property
   and the geometry of components in $S_B$. Its proof will be placed at the end of this
section.
\begin{lem}Let $B$ be a finite or thin Blaschke product.
Given $z_0,z_1$ and $z_2$ in $E$ with $B(z_0)=B(z_1)=B(z_2)$, suppose that there are three paths $\sigma_i$ such that \label{48}
\begin{itemize}
\item[(i)]$ \sigma_i(0)=z_i  $ for $i=0,1,2$,  $ \sigma_1(1)= \sigma_2(1) $, and $B'(\sigma_0(1))\neq 0$;
\item[(ii)]  $  B(\sigma_0(t))=B(\sigma_1(t))=B(\sigma_2(t)), 0\leq t \leq 1$;
\item[(iii)]  For $0 \leq  t <1$, $\sigma_i(t)\in E,i=0,1,2$.
 \end{itemize}
 Then $G[(z_0,z_1)]=G[(z_0,z_2)]$.
\end{lem}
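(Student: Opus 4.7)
Write $\rho_i$ for the local inverse of $B$ defined near $z_0$ with $\rho_i(z_0)=z_i$ for $i=1,2$; these exist and are uniquely determined by their value at $z_0$, since $z_0\in E$ forces each $z_i$ to be a regular point of $B$. The plan is to exhibit a loop $\gamma\subset E$ based at $z_0$ along which the analytic continuation of $\rho_1$ ends at the germ of $\rho_2$, which will give $[\rho_1]=[\rho_2]$ and hence $G[(z_0,z_1)]=G[(z_0,z_2)]$.

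First I would fix a small $\delta>0$ and analytically continue $\rho_i$ along $\sigma_0\big|_{[0,1-\delta]}$. Since this path stays in $E$, the continuation $\widetilde{\rho}_i$ exists; by the argument used in the remark after Definition \ref{43}, the continuity of the lift forces $\widetilde{\rho}_i(\sigma_0(t))=\sigma_i(t)$ for all such $t$. In particular $\widetilde{\rho}_i(\sigma_0(1-\delta))=\sigma_i(1-\delta)$, which lies close to $w:=\sigma_1(1)=\sigma_2(1)$ when $\delta$ is small.

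Next I analyze the local picture at $w$. The hypothesis $B'(\sigma_0(1))\neq 0$ together with the reasoning in the same remark shows that $B'(w)=0$; let $k\geq 2$ denote the multiplicity of $w$ as a zero of $B-B(w)$. Choose local uniformizers so that, in coordinates $\zeta$ near $\sigma_0(1)$ and $\xi$ near $w$, the map $B$ takes the forms $\zeta\mapsto B(w)+\zeta$ and $\xi\mapsto B(w)+\xi^{k}$, respectively. At any nearby point $\zeta_0\neq 0$ there are then exactly $k$ germs of local inverses of $B$ whose image lies close to $w$, one for each branch of $\zeta^{1/k}$; both $\widetilde{\rho}_1$ and $\widetilde{\rho}_2$ must be among these $k$ germs at $\sigma_0(1-\delta)$. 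A small loop in $E$ encircling $\sigma_0(1)$ once induces the cyclic permutation $\zeta^{1/k}\mapsto e^{2\pi i/k}\zeta^{1/k}$ on the $k$ branches, so some integer $n\in\{0,\dots,k-1\}$ of such loops carries $\widetilde{\rho}_1$ to $\widetilde{\rho}_2$.

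Finally I would concatenate: traverse $\sigma_0\big|_{[0,1-\delta]}$ from $z_0$ to $\sigma_0(1-\delta)$, execute $n$ small loops around $\sigma_0(1)$ in $E$, and then return to $z_0$ along the reverse of $\sigma_0\big|_{[0,1-\delta]}$. The resulting loop $\gamma$ lies entirely in $E$, and the analytic continuation of $\rho_1$ along $\gamma$ is a local inverse at $z_0$ sending $z_0$ to $z_2$; by uniqueness of local inverses this continuation equals $\rho_2$, giving $\rho_1\sim\rho_2$ as desired. The main technical point is the monodromy claim in the previous paragraph: one must check that the $k$-to-$1$ behavior of $B$ at $w$ matches the branching of the $k$-th-root local inverses centered at $\sigma_0(1)$, which follows from the preimage decomposition of a small disk about $B(w)$ (one component mapping biholomorphically near $\sigma_0(1)$, one component $k$-to-$1$ near $w$).
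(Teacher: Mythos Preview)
Your proposal is correct and follows essentially the same approach as the paper's proof: both invoke the local normal form of $B$ at the critical point $w=\sigma_1(1)=\sigma_2(1)$ (the paper cites B\"ottcher's theorem, you use local uniformizers, which amount to the same thing), observe that the local inverses with values near $w$ are cyclically permuted by monodromy around $\sigma_0(1)$, and then concatenate a run along $\sigma_0$ with the appropriate number of small loops to build a loop in $E$ carrying $\rho_1$ to $\rho_2$. The paper writes out the loop and the accompanying path from $z_1$ to $z_2$ explicitly as $\widehat{\sigma_0}^{-1}\sigma^{m_0}\widehat{\sigma_0}$ and $\widehat{\sigma_2}^{-1}\gamma_{j_0}\cdots\gamma_1\widehat{\sigma_1}$, whereas you phrase the same thing in monodromy language; the content is identical.
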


\noindent \textbf{Proof of Theorem \ref{41}.}  First we assume that
$F$ is empty. In this case, we have  the following claim.

\noindent \textbf{Claim:} With the assumptions in Theorem \ref{41}
and $F$ being empty,
 \linebreak $B(z_0)=B(z_1)=B(z_2)$ and $z_1 \sim z_2$($z_1\neq z_2$), then $G[(z_0,z_1)]=G[(z_0,z_2)]$.

Since $ \sim$ gives an equivalent relation, we may assume that
$z_1\stackrel{*}\sim z_2$. Moreover, by the proof of Proposition
\ref{45} and Corollary \ref{46},   it is enough to deal with the
following case:
 there are two different components $\Omega_1$ and $\Omega_2$  of $\{z:|B(z)|<t\}$ such that
$$z_i\in  \Omega_i, \quad i=1,2;$$
and there is a $w\in Z(B')$ such that $w\in \partial \Omega_1 \cap
\partial \Omega_2$, see Figure \ref{fig3}.
 Clearly,  $|B(w)|=t$. Now we can
find a simple curve $\sigma$   connecting $B(z_1)$ with $B(w)$,
satisfying $\sigma[0,1) \subseteq E\cap t\mathbb{D}$. Since
$B^{-1}(\sigma[0,1))$ consists of disjoint arcs, among which the
endpoints are $z_0,z_1$ and $z_2$, respectively. Thus, there are
three curves $\sigma_i(i=0,1,2)$ such  that
\begin{equation}B\circ \sigma_i(t)=\sigma(t),0 \leq t \leq 1, i=0,1,2.       \label{4.1}
\end{equation}
Notice that $\sigma_1(1)=\sigma_2(1)=w$, and then $\sigma_0(1)\neq
w$. Otherwise, by the theory of complex variable  we would have
$B'(w)=B''(w)=0$, which is a contradiction to the assumption that
$B'$ has only simple zeros. Since $B|_{Z(B')}$ is injective, it
follows from (\ref{4.1}) that $B'(\sigma_0(1))\neq 0. $ Then by
Lemma \ref{48}, $G[(z_0,z_1)]=G[(z_0,z_2)]$, as desired.

Next we will draw a graph, $\{z_n\}$ being all the  vertices. For
some preliminaries of
  the graph theory, one may refer to \cite[pp. 83-86]{Ha}.
  By the
above claim, when $z_j\stackrel{*}\sim z_k$, we have \begin{equation}G[(z_i,z_j)]=G[(z_i,z_k)],
\quad \mathrm{if} \quad z_i\neq z_j\quad  \mathrm{and } \quad
z_i\neq z_k. \label{4.2}\end{equation}
For the pair $(z_j,z_k)$ with $ z_j\stackrel{*}\sim z_k $, if  those
$\sigma_i$ in Definition  \ref{43} can be carefully chosen so that they
pass  no other point $z_l$, then we draw an abstract edge
between $z_j$ and $z_k$, called an \emph{abstract}  $*$-arc. All
different $*$-arcs  are assumed to be \emph{disjoint}, which means
that they have no intersection except for possible endpoints.
 This can be done because all $z_j$ are in $\mathbb{C}$, which can be regarded as a subspace of $\mathbb{R}^3$; and
all $*$-arcs are taken as  disjoint  curves in  $\mathbb{R}^3$.
 By the proof of Proposition \ref{45} and Corollary \ref{46}, such $*$-arcs yields an abstract path
 (consisting of several adjoining $*$-arcs)
  connecting any two given points in $\{z_n\}$, see Figure \ref{fig4}.
Therefore, all   points $z_n$ and $*$-arcs consists of a connected
graph, $S$. We may assume that   $S$ is simply-connected, since by
the graph theory any connected graph contains a contractible (and
thus simply-connected) subgraph having all vertices of $S$, called
 a maximal tree\cite[\mbox{p. 84,} Proposition 1A.1]{Ha}.
  For each connected subset $\Gamma\subseteq \{z_n\}$
(with some  $*$-arcs ) and some $z   \in  \{z_n\}-\Gamma$, from
(\ref{4.2}) it is not difficult to see that $G[(z,z')]=G[(z,z'')]$
whenever $z',z''\in \Gamma.$ Then we may write $G[(z,\Gamma )] $ to
denote $G[(z,z')]$, and take care that  $G[(z,\Gamma )] $ may
contain  $(z,z_n)$ for some $z_n\not\in \Gamma.$

Now pick one $w \in \{z_n\}$. In $S$, delete $w$ and all $*$-arcs
beginning at $w$, and the remaining consists finitely or infinitely
many connected components $\Gamma_i$. If each component  consists of
infinite $z_j$, then except for the identity component
$G[(w,w)]\equiv G[ z]$, all other components $G[(w,\Gamma_i)]$ of
$S_B$ are infinite; that is,  $\sharp G[(w,\Gamma_i)]=\infty.$
 Otherwise, there is one component $\Gamma^*$
consisting of only finite points. Since $S$ is a tree and $\Gamma^* \subseteq S$, then
$\Gamma^*$ is also a tree. There are two cases under consideration.

\noindent \textbf{Case I}  $\sharp \Gamma^*\geq 2$. In this case,
 one can find an endpoint $w^*$ of $\Gamma^*$. Replacing $w$ with $w^*$,   the above
paragraph shows that there are only two components: the identity
component and   $G[(w^*,\Gamma)]$, which must be infinite. In either
case, the only finite component of $S_B$ is the
identity component. Then by Proposition  \ref{37}  $ \mathcal{V}^*(B)
$ is trivial, and  $M_B$ has no nontrivial reducing subspace.

\noindent \textbf{Case II} $\sharp \Gamma^*=1$. In this case, write $\Gamma^*=\{w^*\}$
and replace $w$ with $w^*$. By the reasoning as in Case I, one can deduce that $ \mathcal{V}^*(B)
$ is trivial, and  $M_B$ has no nontrivial reducing subspace. Thus, the case of $F$  being empty is done.
\vskip2mm
In general, $F$ is not empty. In this case,  also we have the
connected graph $S$. From the proof of Proposition \ref{45} and Corollary \ref{46},
one can construct a connected  sub-graph $\Gamma_0$ (related with $F$)  of $S$ such that
 \begin{itemize}
\item[(1)]  $\Gamma_0$  contains only  finite vertices;
\item[(2)]  in the case of $F\neq \emptyset,$   the above claim  also holds provided that $z_0 \not\in\Gamma_0.$
 \end{itemize}
   Now regard $\Gamma_0$ as
one ``point'' or   a whole body.
 Delete  $\Gamma_0$ and consider the remaining.  If there is some component
 containing   only finite points, or there are at least two different $\Gamma_i$ with $\sharp \Gamma_i=\infty$,
  then by similar discussion as above, $ \mathcal{V}^*(B) $ is trivial. The only remaining  case is that,
$S$ looks like a half-line, with $\Gamma_0$ the endpoint.
 Assume conversely that $ \mathcal{V}^*(B) $ is nontrivial. Then by Proposition  \ref{37}, $S_B$ must contain  one nontrivial component
of finite multiplicity. Let $w$ be the  point next to $  \Gamma_0  $
in $S$, the only possible components of $S_B$ are the following: the identity component $G[(w,w)]$, one infinite component
and one
component $G[(w,\Gamma_0)]$ of finite  multiplicity \linebreak $n=\sharp \Gamma_0$, see Figure 5.
  However, if we let $w'$ be  the  point next to $w$, then the
only nontrivial finite component
 $$G[(w',\Gamma_0)]=G[(w', \Gamma_0\cup \{w\})]$$ whose  multiplicity is not less than
$\sharp  \Gamma_0+1$, which is a contradiction. Therefore, $S_B$ has exactly $2$ components:  the identity component $G[(w,w)]$ and
one infinite component. Thus, $ \mathcal{V}^*(B) $ is trivial, and hence $M_B$ has no nontrivial reducing subspace. The proof is
complete.
 $\hfill \square$
\vskip1.5mm
Let us have a look at the above proof.
 In the case of a thin Blaschke product, we   present  a finite subset $\Gamma_0$ of $S$, that is related to a finite set $F$. However,
if $B$ is a finite Blaschke product, it is probable that $\Gamma_0=S,$ which would spoil the proof of Theorem  \ref{41}.

In fact, for a finite Blaschke product $B$,   $S$  is
a graph of finite vertices. In the case of  $F$ being empty,
applying  the proof of Theorem \ref{41} shows
 that there is a point $w$ and a connected subgraph $\Gamma$ such that $S$ consists of $\Gamma$ and $w$.
 Then $S_B$ has exactly two components: $G[z]$ and $G[(w,\Gamma)],$ which are necessarily of finite multiplicities since
 $B$ is of finite order. Thus, we get the following result.

\begin{cor}
For a finite Blaschke product  $B$, if the   conditions  in Theorem
\ref{41} hold with $F$ being  empty, then $M_B$ has exactly two minimal     \label{42}
reducing subspaces,  and thus $\dim
\mathcal{V}^*(B)=2$. Therefore, $\mathcal{V}^*(B)$ is abelian.
\end{cor}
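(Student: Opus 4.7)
The plan is to run, in the finite-Blaschke-product setting, the graph construction built in the proof of Theorem~\ref{41} and then extract the operator-algebra consequences from the representation theory of Section~3. Fix any $z_0\in E$; because $B$ is a finite Blaschke product of degree $n$, the fiber $B^{-1}(B(z_0))=\{z_0,z_1,\dots,z_n\}$ is a \emph{finite} set. By Proposition~\ref{45}, $B$ is gluable, so the abstract graph $S$ of the proof of Theorem~\ref{41} (vertices the $z_j$, edges the disjoint $*$-arcs) is connected; passing to a maximal tree we may assume $S$ is a finite tree.

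I then repeat the Case~I/Case~II dichotomy inside the proof of Theorem~\ref{41} with $F=\emptyset$. Since $S$ is a finite tree, I pick a leaf $w$ and let $\Gamma$ denote the connected subgraph consisting of the remaining vertices. The Claim established inside that proof says $G[(z_0,z_i)]=G[(z_0,z_j)]$ whenever $z_i\sim z_j$ with both distinct from $z_0$; applied with $w$ in place of $z_0$, it collapses all $G[(w,z_j)]$ with $z_j\in\Gamma$ to a single component $G[(w,\Gamma)]$. Hence $S_B$ has exactly two connected components: the trivial component $G[z]$ and $G[(w,\Gamma)]$. Both carry finite multiplicity, bounded by $n=\deg B$, since $B$ itself is $n$-to-$1$.

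The last step is to translate the count of components into a dimension count for $\mathcal{V}^*(B)$. By Theorem~\ref{33} each unitary in $\mathcal{V}^*(B)$ is uniquely determined by a sequence $\{c_k\}$ in the expansion $Sh(z)=\sum_{k} c_k\, h\circ\rho_k(z)\,\rho_k'(z)$, and Lemma~\ref{35} forces $c_i=c_j$ whenever $[\rho_i]=[\rho_j]$. Since $S_B$ has only two components, the admissible coefficient sequences span at most a $2$-dimensional space; promoting this from unitaries to all of $\mathcal{V}^*(B)$ by the spectral theorem gives $\dim\mathcal{V}^*(B)\le 2$. Conversely, every finite Blaschke product of degree $\ge 2$ has the distinguished nontrivial reducing subspace recalled in the introduction (see \cite{HSXY,GSZZ,GH1,Sun}), so $\dim\mathcal{V}^*(B)\ge 2$, and equality holds. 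A $2$-dimensional von Neumann algebra is necessarily of the form $\mathbb{C}I\oplus\mathbb{C}P$ for a single nontrivial projection $P$, which is abelian and corresponds to exactly two minimal reducing subspaces. The main obstacle I anticipate is precisely the upper bound: Theorem~\ref{33} is stated only for unitaries commuting with $M_B$, so one must take care to extend the representation (or at least the coefficient-collapse of Lemma~\ref{35}) to the full commutant before the counting argument applies; everything else is bookkeeping on the graph $S$.
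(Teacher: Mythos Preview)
Your argument is correct and the geometric half---running the tree argument of Theorem~\ref{41} with $F=\emptyset$ to conclude that $S_B$ has exactly two components, $G[z]$ and $G[(w,\Gamma)]$---is exactly what the paper does (see the paragraph immediately preceding the corollary).

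Where you differ is in extracting $\dim\mathcal{V}^*(B)=2$ from the component count. The paper simply invokes the known structure theory for finite Blaschke products: by \cite[Theorem~7.6]{DSZ} (or more recently \cite{DPW}), $\mathcal{V}^*(B)$ is generated by the operators $\mathcal{E}_{[\rho]}$ indexed by components of $S_B$, and with two components one gets $\dim\mathcal{V}^*(B)=2$ and abelianness immediately. You instead give a self-contained argument: Theorem~\ref{33} plus Lemma~\ref{35} force the coefficient sequence of any unitary in $\mathcal{V}^*(B)$ to be constant on each component, hence to live in a $2$-dimensional space; since a von Neumann algebra is the linear span of its unitaries, this gives the upper bound, while the distinguished reducing subspace from \cite{HSXY,GSZZ,GH1,Sun} gives the lower bound. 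This is a legitimate alternative and arguably more transparent than citing \cite{DSZ,DPW} wholesale. Two small caveats: first, Theorem~\ref{33} and Lemma~\ref{35} are stated in this paper for \emph{thin} Blaschke products, so strictly you should cite their finite-Blaschke-product predecessors in \cite{DSZ} (the paper itself flags this---see the sentence before Lemma~\ref{35} and Remark~3.8); second, the obstacle you flag (passing from unitaries to the full commutant) is indeed resolved by the spectral theorem exactly as you say, so no real gap remains.
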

\noindent Very Recently, Douglas, Putinar and Wang \cite{DPW} showed that   $\mathcal{V}^*(B)$
is abelian and is $*$-isomorphic to   $q $-th direct sum
$\mathbb{C}\oplus \cdots \oplus \mathbb{C}$ of  $\mathbb{C}$, where
\linebreak $q$ $(2\leq q\leq n)$ is the number of components of  the Riemann surface $S_B$.
  Corollary  \ref{42}  shows that
in most cases $q=2.$ In this case, write \linebreak
$M_0=\overline{span\{B'B^n:n=0,1,\cdots\}}$, and
then $M_0$ and $M_0^\perp$ are the only minimal reducing subspaces for $M_B$ \cite[Theorem 15]{SZ2}.

To end this section,  the proof of Lemma \ref{48} will be presented.  We
need B$\ddot{o}$ttcher's theorem, which is of independent interest,
see \cite[Theorem 9.1]{Mi} for example.
\begin{thm}[B$\ddot{o}$ttcher]  Suppose that $$f(z) = a_nz^n + a_{n+1}z^{n+1}+ \cdots,$$
where  $a_n \neq  0$ for $n\geq 2$. Then there exists a local
holomorphic change of coordinate $w = \varphi(z)$ which conjugates
$f$ to the $n$-th power map $w \mapsto w^n$ throughout some
\label{49} neighborhood of $\varphi(0) = 0.$ Furthermore,  $\varphi$
is unique up to multiplication by an ($n-1$)-st root of
unity.\end{thm}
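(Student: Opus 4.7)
The plan is to follow the classical fixed-point construction due to B\"ottcher. First I would normalize: by the linear change of coordinate $z \mapsto cz$ with $c^{n-1} = a_n$, one may assume $a_n = 1$, so that $f(z) = z^n\bigl(1 + b_1 z + b_2 z^2 + \cdots\bigr)$ on some disk $\{|z| < r_0\}$. Choose $r \in (0,r_0)$ small enough that $|b_1 z + b_2 z^2 + \cdots| < 1/2$ on $\{|z|\leq r\}$; then $|f(z)| \leq (3/2)|z|^n$, and if $r$ is further shrunk so that $(3/2)r^{n-1} < 1$, the iterates $f^k = f\circ\cdots\circ f$ all stay inside $\{|z|\leq r\}$ and satisfy $f^k(z) = z^{n^k}(1+O(z))$.

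Next, define the candidate conjugacies
$$\varphi_k(z) = \bigl(f^k(z)\bigr)^{1/n^k},$$
choosing on $\{|z|\leq r\}\setminus\{0\}$ the branch of the $n^k$-th root for which $\varphi_k$ extends holomorphically across $0$ with $\varphi_k(0) = 0$ and $\varphi_k'(0) = 1$. This branch exists because $f^k(z)/z^{n^k}$ is a nonvanishing holomorphic function equal to $1$ at the origin, hence admits a unique holomorphic $n^k$-th root close to $1$. Writing $f(\zeta) = \zeta^n g(\zeta)$ with $g(\zeta) = 1 + O(\zeta)$, a short calculation gives
$$\frac{\varphi_{k+1}(z)}{\varphi_k(z)} = g\bigl(f^k(z)\bigr)^{1/n^{k+1}},$$
and since $|f^k(z)|$ decays roughly like $|z|^{n^k}$, one bounds $|\log(\varphi_{k+1}/\varphi_k)|$ by a geometric series in $n^{-k}$. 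Hence the telescoping product $\prod_{k\geq 0}(\varphi_{k+1}/\varphi_k)$ converges uniformly on some smaller disk $\{|z|\leq r'\}$, giving a holomorphic limit $\varphi = \lim_k \varphi_k$ with $\varphi(0)=0$ and $\varphi'(0) = 1$; in particular $\varphi$ is a local biholomorphism at the origin.

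The conjugacy relation then comes from passing to the limit in
$$\varphi_k\bigl(f(z)\bigr) = \bigl(f^{k+1}(z)\bigr)^{1/n^k} = \bigl(\varphi_{k+1}(z)\bigr)^n,$$
once the branch choices are reconciled (both sides are holomorphic, equal to $z^n + O(z^{n+1})$ at the origin). This yields $\varphi(f(z)) = \varphi(z)^n$. For the uniqueness assertion, if $\psi$ is another local biholomorphic conjugacy fixing $0$, set $h = \psi\circ\varphi^{-1}$; the two functional equations combine to give $h(w^n) = h(w)^n$. Expanding $h(w) = c_1 w + c_2 w^2 + \cdots$ and comparing coefficients of $w^n, w^{n+1}, w^{n+2}, \ldots$ on the two sides forces $c_1^{n-1} = 1$ and $c_k = 0$ for $k\geq 2$, so $h$ is multiplication by an $(n-1)$-st root of unity.

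The main obstacle is the convergence step: one has to estimate $|f^k(z)|$ precisely enough to dominate the $n^k$-th root and verify that the factors in $\prod_k(\varphi_{k+1}/\varphi_k)$ approach $1$ geometrically, while simultaneously choosing the root branches coherently so that the telescoping product is meaningful. Once that uniform estimate is in place, the remaining parts (identifying the limit, deriving the functional equation, and the power-series comparison for uniqueness) are straightforward consequences.
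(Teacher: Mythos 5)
The paper does not prove this statement at all: it is quoted as the classical theorem of B\"ottcher with a citation to Milnor's book, so there is no internal proof to compare against. Your argument is precisely the standard B\"ottcher construction found in that reference (normalize $a_n=1$, set $\varphi_k=(f^k)^{1/n^k}$ with coherent root branches, control $\log(\varphi_{k+1}/\varphi_k)$ by a geometric series in $n^{-k}$, pass to the limit in $\varphi_k\circ f=\varphi_{k+1}^n$, and settle uniqueness by comparing power-series coefficients in $h(w^n)=h(w)^n$), and it is correct as outlined.
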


Roughly speaking, Theorem \ref{49} tells us that if $f$ is a
non-constant function that is holomorphic at $z_0$  and $f'(z_0)=0
$, then on an enough small neighborhood of $z_0$, it behaves no more
complicated than the map $h(z)= z^n$ at $z=0$. \vskip2mm \noindent
\textbf{Proof of Lemma \ref{48}.} Suppose that those conditions
(i)-(iii) hold. Without loss of generality, we assume that
$\sigma_1(1)=0=B(\sigma_1(1)).$ From (i) and the comments below
Definition \ref{43}, we have  $B'(\sigma_1(1))=0$.
 Then  by B$\ddot{o}$ttcher's theorem,
there is local holomorphic change of coordinate $w = \varphi(z)$
defined on a neighborhood of $  \sigma_1(1) $, such that
$\varphi\circ B \circ \varphi^{-1}(w)=w^n$. This implies that there
are $n$ disjoint paths $$\gamma_1,\cdots,\gamma_n$$ whose images are
contained in a small neighborhood of $0$ (see Figure 6),
 and a loop $\sigma$ on a
neighborhood of $\sigma_0(1) $ such that
 \begin{itemize}
\item[(a)]   $\gamma_j(1)=\gamma_{j+1}(0),j=1, \cdots, n-1 \quad \mathrm{and} \quad \gamma_n(1)=\gamma_1(0)$;
\item[(b)]  $  B(\sigma_0(t))=B(\gamma_j(t)) , 0\leq t \leq 1, j=1,\cdots, n$;
\item[(c)]  for some  enough large $t^*\in (0,1)$,  $\sigma_0(t^*)=\sigma(0)$,
  $\sigma_1(t^*)=\gamma_1(0)$ and $\sigma_2(t^*)=\gamma_{j_0}(1)$ holds for some
$j_0(1\leq j_0 \leq n)$.
 \end{itemize}
 Now  let  $\widehat{\sigma_i}$ denote the segment of the loop
$\sigma_i$ connecting $\sigma_i(0)$ with $\sigma_i(t^*)$ for
$i=0,1,2$. Precisely, put 
  $$\widehat{\sigma_i}(t)=\sigma_i(t^*t),\, 0\leq t\leq 1,\ \ i=0,1,2.$$ Notice that for any
$j$, there always exist an  $m$ such that $$\sigma^m (t)\mapsto
\big(\gamma_j\cdots  \gamma_1 \big)(t)$$ induces a natural analytic
continuation.
 Let $m_0$ be the integer corresponding to $j=j_0$ and consider the loop
$\widetilde{\sigma} \triangleq \widehat{\sigma_0}^{-1}\sigma^{m_0}
\widehat{\sigma_0}$. Also, there is a path
 $$\widetilde{\gamma} \triangleq \widehat{\sigma_2}^{-1}\gamma_{j_0}\cdots \gamma_2 \gamma_1 \widehat{\sigma_1} $$ which connects
 $z_1$ with $z_2$.
 Then one can show that $\widetilde{\sigma} (t) \to \widetilde{\gamma}(t)$ naturally gives an analytic  continuation.
 Since $G[(\widetilde{\sigma} (0), \widetilde{\gamma}(0) )]=G[(\widetilde{\sigma} (1), \widetilde{\gamma}(1) )]$,
and $$\widetilde{\sigma} (0) =\widetilde{\sigma} (1)=z_0,
\widetilde{\gamma}(0)=z_1,\ \ \mathrm{and}\ \
\widetilde{\gamma}(1)=z_2,$$ we have    $G[(z_0,z_1)]=G[(z_0,z_1)]$.
The proof of Lemma \ref{48} is complete.
 $\hfill \square$

\section{The construction of $B$ such that $M_B$ has no nontrivial reducing subspace}
~~~~In this section, we will construct a thin Blaschke product $B$
such that $M_B$ has no nontrivial reducing subspace.

First, we establish the following lemma.
\begin{lem}  Suppose that $B$ is a finite  Blaschke product with $\deg B\geq 2$.
Then for any $r\in (0,1)$, there is an  $s(0<s<1)$ such that for any
\label{51} $\lambda\in \mathbb{D}$ with $|\lambda|>s$, we have the followings:
\begin{itemize}
\item[(i)] there is a critical point $w^*$ of   $ B\varphi_\lambda $ such that $ |B(w^*)\varphi_\lambda(w^*) |>r$.
   \item[(ii)]  $(B\varphi_\lambda)(w^*)$ is different from the values of $ B\varphi_\lambda $
 on $Z\big((B\varphi_\lambda)' \big)-\{w^*\}$.
 \end{itemize}
\end{lem}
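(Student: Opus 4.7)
The plan is to exploit that $B\varphi_\lambda$ is itself a finite Blaschke product, of degree $n+1$ where $n=\deg B$, and to locate its critical points as $|\lambda|\to 1$. By Bochner's theorem (Theorem \ref{47}), $B\varphi_\lambda$ has exactly $n$ critical points in $\mathbb{D}$. Since $\varphi_\lambda(z)\to\zeta$ uniformly on compact subsets of $\mathbb{D}$ whenever $\lambda\to\zeta\in\partial\mathbb{D}$, we have $(B\varphi_\lambda)'\to\zeta B'$ uniformly on compact subsets of $\mathbb{D}$. Bochner's theorem also guarantees that $B'$ has its $n-1$ zeros inside $\mathbb{D}$, so by Hurwitz's theorem, for $|\lambda|$ sufficiently close to $1$ exactly $n-1$ critical points of $B\varphi_\lambda$ lie in a prescribed small neighborhood of $Z(B')$, and the remaining critical point $w^*_\lambda$ must satisfy $|w^*_\lambda|\to 1$.

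For part (i), I would use the critical equation $B'(w)/B(w)=-\varphi_\lambda'(w)/\varphi_\lambda(w)$ together with the explicit form $\varphi_\lambda'(w)/\varphi_\lambda(w)=(|\lambda|^2-1)/[(1-\bar\lambda w)(\lambda-w)]$. Setting $\rho=|\varphi_\lambda(w)|$ and invoking the standard identity $1-|\varphi_\lambda(w)|^2=(1-|\lambda|^2)(1-|w|^2)/|1-\bar\lambda w|^2$, a short calculation yields the key balance
\[
(1-|w|^2)\left|\frac{B'(w)}{B(w)}\right|=\frac{1-\rho^2}{\rho}
\]
at every critical point $w$ of $B\varphi_\lambda$. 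Since $B'$ has no zeros on $\partial\mathbb{D}$ (Bochner again) and $|B|\to 1$ near the boundary, $|B'/B|$ is bounded above and below by positive constants on some annulus $\{1-\varepsilon<|z|<1\}$. Applied at $w=w^*_\lambda$, this forces $(1-\rho^2)/\rho\to 0$, hence $\rho=|\varphi_\lambda(w^*_\lambda)|\to 1$. Combined with $|B(w^*_\lambda)|\to 1$ (as $|w^*_\lambda|\to 1$ and $B$ is a finite Blaschke product), this gives $|B(w^*_\lambda)\varphi_\lambda(w^*_\lambda)|\to 1$, so any prescribed $r<1$ is eventually beaten.

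For part (ii), each of the $n-1$ remaining critical points $w_{j,\lambda}$ converges to some $w_{0,j}\in Z(B')\subset\mathbb{D}$, so $(B\varphi_\lambda)(w_{j,\lambda})\to \zeta B(w_{0,j})$, and this limit has modulus $|B(w_{0,j})|<1$ strictly, because $w_{0,j}$ lies in the open disk. Together with $|(B\varphi_\lambda)(w^*_\lambda)|\to 1$ from (i), the critical value at $w^*_\lambda$ must differ in modulus from each of the other critical values once $|\lambda|$ is close enough to $1$. A uniform choice of $s\in(0,1)$ valid for every $\lambda$ with $|\lambda|>s$ is obtained by a routine contradiction-and-subsequence argument: if no such $s$ exists, pick $\lambda_k$ with $|\lambda_k|\to 1$ violating (i) or (ii), pass to a subsequence with $\lambda_k\to\zeta\in\partial\mathbb{D}$, and apply the analysis above to derive a contradiction. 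The main delicate point is the balance identity and the uniform two-sided bound on $|B'/B|$ in a boundary annulus; once these are in hand, Hurwitz's theorem and continuity make the rest routine.
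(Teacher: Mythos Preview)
Your proposal is correct and takes a genuinely different route from the paper's proof. Both arguments begin the same way: by Bochner's theorem $B\varphi_\lambda$ has exactly $n$ critical points, and since $(B\varphi_\lambda)'\to \zeta B'$ uniformly on compacta as $\lambda\to\zeta\in\mathbb{T}$, a Hurwitz/Rouch\'e argument pins $n-1$ of them near $Z(B')$ and forces the remaining one $w^*_\lambda$ to leave every compact set, so $|w^*_\lambda|\to 1$. From here the paths diverge. The paper argues \emph{geometrically}: it shows that for suitable $r_1$ the sublevel set $\{|B\psi_\lambda|<r_1\}$ decomposes into a component $\Omega_1$ containing $Z(B)$ on which $B\psi_\lambda$ is $n$-to-one, and a component $\Omega_2$ containing $\lambda$ on which $B\psi_\lambda$ is univalent; counting critical points then forces $w^*_\lambda\notin \Omega_1\sqcup\Omega_2$, whence $|B\psi_\lambda(w^*_\lambda)|\geq r_1>r$. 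You instead argue \emph{analytically}, deriving at any critical point the balance identity $(1-|w|^2)\,|B'(w)/B(w)|=(1-\rho^2)/\rho$ with $\rho=|\varphi_\lambda(w)|$; since $|B'/B|$ is bounded on a boundary annulus and $|w^*_\lambda|\to 1$, the left side tends to $0$, hence $\rho\to 1$, and combined with $|B(w^*_\lambda)|\to 1$ this gives (i). Your argument is shorter and yields the sharper asymptotic $|(B\varphi_\lambda)(w^*_\lambda)|\to 1$; the paper's level-set decomposition, on the other hand, is reused verbatim later (e.g.\ in Proposition~\ref{74}) and fits the ambient geometric framework of components and proper maps. For (ii) both proofs are essentially the same: the other $n-1$ critical values stay bounded in modulus strictly below $1$ because the corresponding critical points converge into $Z(B')\subset\mathbb{D}$, so they separate from the critical value at $w^*_\lambda$.
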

\noindent In particular, with an appropriate choice of $s$, if the
restriction $B|_{Z(B')}$ of $B$ on $Z(B')$ is injective, then the
restriction of $B\varphi_\lambda$ on $Z\big((B\varphi_\lambda)'
\big)$ is also injective.

\vskip2mm Before we give the proof, let us make an observation.
 For each $\lambda \in \mathbb{D}-\{0\}$  and a compact subset $K$ of
$\mathbb{D}$,
\begin{eqnarray*} | \varphi_\lambda(z)-\frac{\lambda}{|\lambda|}| &=&|(\frac{\lambda-z}{1-\overline{\lambda}z} -\lambda)+ (\lambda- \frac{\lambda}{|\lambda|})|\\
&\leq &  | z\frac{1-|\lambda|^2}{1-\overline{\lambda}z} | + 1-|\lambda| \\
&\leq &  2\frac{ 1-|\lambda| }{1-|z|} \leq   \frac{2(1-|\lambda|)}{1-r(K)},
\,z\in K ,\end{eqnarray*} where  $r(K)\triangleq \max \{|z|:z\in
K\}$. Write  $ \psi_\lambda=\frac{|\lambda|}{\lambda} \varphi_\lambda$, and then
\begin{equation}|\psi_\lambda(z)-1| \leq   \frac{2(1-|\lambda|)}{1-r(K)}, \,z\in K .\label{5.1} \end{equation}

\noindent \textbf{Proof of Lemma \ref{51}.} Suppose that $B$ is a
finite  Blaschke product with $\deg B \triangleq n \geq 2$. It
suffices to show that for  any $r\in (0,1)$, there is an $s(0<s<1)$
such that  (i) holds for all $\lambda\in \mathbb{D}$ with $|\lambda|>s$. If this
has been done, we will show that $s$ can be enlarged such that (ii)
also holds. To see this, given   an  $r\in (0,1)$, there is an $s$
such that (i) holds for all $\lambda\in \mathbb{D}$ $(|\lambda|>s)$. Pick an
$r'\in (r,1)$ satisfying $$r'>\max \{|z|; B'(z)=0\},$$ and 
then find an $r''\in (0,1)$ such that $$r''>\max \{|B(z)|:|z|\leq
r'\}\quad \mathrm{and} \quad r''>r'>r.$$
 For this $r''$, there is an  $s''(0<s''<1)$ such that  (i) holds for all $\lambda\in \mathbb{D}$ with $|\lambda|>s''$.
On the other hand, by (\ref{5.1}) there is an $s'\in (0,1)$ such
that the following holds: for any $\lambda \in \mathbb{D}(|\lambda|>s')$,
$$\|B \psi_\lambda-B \|_{r''\overline{\mathbb{D}},\infty}$$
is enough small so that
$$\|(B \psi_\lambda)'-B' \|_{r' \mathbb{T} ,\infty}< \min \{|B'(z)|: |z|=r'\}.$$
Then applying Rouche's theorem  implies that on $r'\mathbb{D}$,  $
\big(B  \psi_\lambda \big)' $ has the same number of zeros
as $B'$, counting multiplicity. Since Bochner's theorem (Theorem \ref{47}) 
says that
     each Blaschke product with $k$ zeros has exactly $k-1$
critical points in the unit disk $\mathbb{D}$, counting
multiplicity, then   $ \big(B  \psi_\lambda \big)' $ has $n-1$ zeros on
$r'\mathbb{D}$. Again by Bochner's theorem,  $ \big(B  \psi_\lambda \big)'
$ has $n$ zeros in $\mathbb{D}$, and hence there is exactly one
remaining zero $w^*$ outside
  $r'\mathbb{D}$. That is,
  $$Z\big((B  \psi_\lambda )'\big)\cap r'\mathbb{D}= Z\big((B  \psi_\lambda )'\big)-\{w^*\},$$
which gives that \begin{eqnarray*} r'' &>&  \max \{|B(z)|:|z|\leq r'\}\\
&\geq &  \max \{|B\psi_\lambda(z)|:|z|\leq r'\} \\
&\geq &  \max \{|B\psi_\lambda(z)|:z\in  Z\big((B  \psi_\lambda )'\big)-\{w^*\}
\}.\end{eqnarray*} By our choice of $s''$, for any $\lambda\in \mathbb{D}$
with $\lambda>\max\{s',s''\}$,
$$|B(w^*)\psi_\lambda(w^*)|>r''\geq   \max \{|B\psi_\lambda(z)|:z\in  Z\big((B  \psi_\lambda )'\big)-\{w^*\} \}.
$$
Also, $|B(w^*)\psi_\lambda(w^*)|>r$ since $r''>r$. Therefore, we can pick
an \linebreak $s=\max\{s',s''\}$ such that both (i) and (ii) hold
for any
 $\lambda\in \mathbb{D}$ with $|\lambda|>s$.

It remains to  prove (i).  Set  $t_0=\max\{|z|: z\in Z(B)\}$ and
$$t_1=\max \{|B(z)|; z\in t_0\overline{\mathbb{D}}\}.$$  For any
$t>t_1$, let $\Omega_{t}$ denote the component of $\{z:|B(z)| <t\}$
containing $t_0\mathbb{D}$. We will see that $$\Omega_{t}=\{z:|B(z)|
<t\}.$$The reasoning is as follows. By Lemma \ref{22},
$B|_{\Omega_{t}} : \Omega_{t}\to t\mathbb{D}$ is a proper map. By
the property of proper map\cite[Appendix E]{Mi}, $B|_{\Omega_{t}}$
is a $k$-folds map for some integer $k$. Since the restriction
$\big(B-0\big)|_{\Omega_{t}}$ has $n(n=\deg B)$ zeros, then $k=n$.
This implies that there is no component of $\{z:|B(z)| <t\}$
 other than $\Omega_{t}$. Therefore, $\Omega_{t}=\{z:|B(z)| <t\}$.

Pick a $t^*\in ( 0,1) $ such that   $$t^*>\max \{|z| :z\in
Z(B')\}\quad  \mathrm{and} \quad t^*>t_1.$$ Without loss of
generality, we assume that $r>t^*.$ Write $r_0=r$, and pick $r_1$
and $r_2$ such that $$t^*<r_0<r_1<r_2<1.$$ As mentioned above,
$\Omega_{r_j}=\{z:|B(z)| <r_j\}(j=0,1,2).$ Since $$\|B\psi_\lambda-B
\|_{\Omega_{r_2},\infty}\leq \| \psi_\lambda-1 \|_{\Omega_{r_2},\infty},$$
then by (\ref{5.1})  for   enough small $\varepsilon>0$, there
exists  an $s\in (r(\overline{\Omega_{r_2}} ),1)$ such that
 \begin{equation}\| \psi_\lambda-1 \|_{\Omega_{r_2},\infty}<\varepsilon , \quad |\lambda|>s. \label{a52}\end{equation}
  Let $\Omega_1$ be the component  of $$\{z: |B(z) \psi_\lambda(z)|<r_1\}$$ containing $Z(B)$,
   and  $\Omega_2 $  denotes the component containing $\lambda$, see Figure 7.
 The prescribed  $\varepsilon$ in (\ref{a52}) can be chosen   enough small so that
 $$\Omega_{r_0} \subseteq \Omega_1   \subseteq  \Omega_{r_2}.$$
 Since $|\lambda|>s> r (\overline{\Omega_{r_2}} )$,
then  $\Omega_1\neq \Omega_2 $, and hence $\Omega_1\cap \Omega_2 $
is empty. By Lemma \ref{22} the restrictions of $ B \psi_\lambda$ on
$\Omega_1$ and $\Omega_2$ are proper maps onto $r_1 \mathbb{D}$.
Notice that $B \psi_\lambda  |_{\Omega_1}$  is of $n$-folds, and then $ {B
\psi_\lambda}|_{\Omega_2}$ is a $1$-fold map. That is, $ {B
\psi_\lambda}|_{\Omega_2}$ is univalent, and hence the derivative of $B(z)
\psi_\lambda$  never varnishes  on $\Omega_2$. By Bochner's theorem,
the derivative of $B(z) \psi_\lambda$ has $n$ zeros, with $n-1$ in
$\Omega_{r_2}$ since $B'$ has $n-1$ zeros  in $\Omega_{r_2}$.  Thus,
there is exactly one critical point $w^*$ outside $\Omega_{r_2}
\sqcup  \Omega_2$, forcing $w^* \not\in \Omega_1 \sqcup  \Omega_2 $.
 Since $$\{z: |B(z) \psi_\lambda(z)|<r_1\}=\Omega_1 \sqcup  \Omega_2,$$  $|B(w^*)
\psi_\lambda(w^*)|\geq r_1>r$, as desired. The proof is complete. $\hfill
\square$

\vskip2mm
Below, we write  $\stackrel{\circ} D$  for the interior of a set $D$. Combining
Rouche's theorem with Cauchy's formula for derivatives, we have the
following.
\begin{lem}   Suppose that $B$ is a finite  Blaschke product and
 $$Z(B')\subseteq   \stackrel{\circ}D_1 \subseteq  D_1\subseteq   \stackrel{\circ} D_2  \subseteq    D_2 \subseteq  \mathbb{D},$$ \label{52}
  where  $D_1$ and $D_2$ are two closed disks.
  Then for any enough small $\varepsilon>0$, there is a $\delta>0$ such that for any holomorphic  function $h$ over $\mathbb{D}$ with
  $\|h-B\|_{D_2,\infty}<\delta$, we have
  $Z(h')\cap D_1 \subseteq O(Z(B'), \varepsilon)\subseteq   \stackrel{\circ}D_1 $.

  If in addition, all zeros of  $ B' $ are simple zeros, then all points in \linebreak $Z(h')\cap D_1$
  are simple zeros of $h'$; and if $B|_{Z(B')} $ is injective, then the restriction of $h $
  on $Z(h')\cap D_1$ is also injective.
\end{lem}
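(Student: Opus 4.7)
\medskip\noindent\textbf{Proof plan for Lemma \ref{52}.} The strategy is to convert a uniform bound on $h-B$ into a pointwise bound on $h'-B'$ via Cauchy's integral formula for the derivative, then localize the zeros of $h'$ near those of $B'$ by Rouch\'e's theorem.

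For the first step, set $d=\mathrm{dist}(D_1,\partial D_2)>0$, which is positive because $D_1\subseteq\stackrel{\circ}{D}_2$ and both sets are compact. For each $z\in D_1$ the closed disk $\overline{\{w:|w-z|=d\}}$ lies in $D_2$, so Cauchy's formula gives
\[
|h'(z)-B'(z)|=\left|\frac{1}{2\pi i}\int_{|w-z|=d}\frac{h(w)-B(w)}{(w-z)^2}\,dw\right|\le \frac{1}{d}\,\|h-B\|_{D_2,\infty}.
\]
Hence $\|h'-B'\|_{D_1,\infty}\le \delta/d$ whenever $\|h-B\|_{D_2,\infty}<\delta$.

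For the second step, Bochner's theorem (Theorem \ref{47}) shows $Z(B')=\{w_1,\ldots,w_k\}$ is finite. Given $\varepsilon>0$, shrink it if necessary so that the closed Euclidean disks $\overline{U_j}=\{z:|z-w_j|\le\varepsilon\}$ are pairwise disjoint and lie in $\stackrel{\circ}{D}_1$. Put
\[
m_1=\min_{1\le j\le k}\min_{z\in\partial U_j}|B'(z)|,\qquad m_2=\min_{z\in D_1\setminus\bigcup_j U_j}|B'(z)|,
\]
both positive, since $B'$ has no zero on the compact sets in question. Choose $\delta<d\cdot\min(m_1,m_2)$. Then on each boundary $\partial U_j$ one has $|h'-B'|<|B'|$, so by Rouch\'e's theorem $h'$ and $B'$ have the same number of zeros, counted with multiplicity, in $U_j$. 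On the complementary region $D_1\setminus\bigcup_j U_j$ one has $|h'|\ge|B'|-|h'-B'|>0$, so $h'$ is zero-free there. Consequently $Z(h')\cap D_1\subseteq\bigcup_j U_j=O(Z(B'),\varepsilon)\subseteq\stackrel{\circ}{D}_1$.

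For the remaining assertions, if every zero of $B'$ is simple, the Rouch\'e count above forces exactly one zero of $h'$ (counted with multiplicity) in each $U_j$, which is therefore simple. For the injectivity claim, assume $B|_{Z(B')}$ is injective and set $\eta=\tfrac{1}{5}\min_{i\ne j}|B(w_i)-B(w_j)|>0$. By uniform continuity of $B$ on $D_1$, shrink $\varepsilon$ so that $|B(z)-B(w_j)|<\eta$ on $U_j$; then shrink $\delta$ further to require $\delta<\eta$. Writing $z_j$ for the unique zero of $h'$ in $U_j$,
\[
|h(z_j)-B(w_j)|\le|h(z_j)-B(z_j)|+|B(z_j)-B(w_j)|<2\eta,
\]
and for $i\ne j$ the triangle inequality yields $|h(z_i)-h(z_j)|\ge|B(w_i)-B(w_j)|-4\eta\ge\eta>0$, so $h|_{Z(h')\cap D_1}$ is injective.

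The proof is essentially a chaining of standard estimates; the only delicate point is arranging the constants $\varepsilon$, $\eta$, $\delta$ in the correct order of dependence, which amounts to routine bookkeeping.
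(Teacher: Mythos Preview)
Your proof is correct and follows exactly the approach the paper indicates: the paper does not spell out a proof of this lemma but simply states it as a consequence of ``Rouch\'e's theorem with Cauchy's formula for derivatives,'' which is precisely the combination you implement. Your treatment of the two supplementary assertions (simple zeros via the Rouch\'e count, injectivity via a separation-of-values estimate) is the natural way to fill in those details.
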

By applying Lemmas \ref{51} and \ref{52}, we are able to give  the
following.
\begin{thm}For a sequence $\{z_n\}$ in $\mathbb{D}$ satisfying $\lim\limits_{n\to \infty} |z_n|=1$, \label{53}
 there is a  Blaschke subsequence $\{z_{n_k}\}$ such that
 $M_B$ has no nontrivial reducing subspace, where $B$ denotes the    Blaschke product for $\{z_{n_k}\}$.
\end{thm}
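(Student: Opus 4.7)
\bigskip

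\noindent\textbf{Proof plan for Theorem \ref{53}.}  The idea is to extract a thin subsequence $\{z_{n_k}\}$ of $\{z_n\}$ by induction, choosing $z_{n_{k+1}}$ so close to $\partial\mathbb{D}$ that the finite Blaschke products $B_k \triangleq \prod_{j=1}^{k} \varphi_{z_{n_j}}$ have good critical behavior persisting in the limit. At each step the new point must simultaneously (a) strengthen thinness, (b) create exactly one new critical point of $B_{k+1}$ sitting outside the region already controlled, with a critical value distinct from all earlier critical values, and (c) perturb the previously existing critical points only negligibly. Since $|z_n|\to 1$, every constraint of this form is satisfiable by taking the next index $n_{k+1}$ large enough in the original sequence.

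The inductive step: suppose $z_{n_1},\ldots,z_{n_k}$ and the closed disk $D_k\subset\mathbb{D}$ have been chosen with $Z(B_k')\subseteq\stackrel{\circ}{D}_k$, each zero of $B_k'$ simple, and $B_k|_{Z(B_k')}$ injective. Fix any closed disk $D_{k+1}$ with $D_k\subseteq\stackrel{\circ}D_{k+1}\subseteq\mathbb{D}$ whose radius grows to $1$. Because $\varphi_\lambda$ tends to a unimodular constant locally uniformly on $\mathbb{D}$ as $|\lambda|\to 1$, the finite Blaschke products $B_k\varphi_\lambda$ approach $B_k$ (up to a constant factor) uniformly on $D_{k+1}$. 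Applying Lemma \ref{52} to $B_k$ on $D_k\subset\stackrel{\circ}D_{k+1}$ gives a threshold $s_1<1$: whenever $|\lambda|>s_1$, the critical points of $B_k\varphi_\lambda$ inside $D_k$ lie in a prescribed small neighborhood of $Z(B_k')$, are all simple, and the restriction of $B_k\varphi_\lambda$ to them is injective and close in value to the original $B_k$ on $Z(B_k')$. Applying Lemma \ref{51} to $B_k$ gives a second threshold $s_2<1$: whenever $|\lambda|>s_2$, there is a further critical point $w^*$ of $B_k\varphi_\lambda$ with $|B_k(w^*)\varphi_\lambda(w^*)|$ arbitrarily close to $1$ and distinct from the values of $B_k\varphi_\lambda$ on all other critical points. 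Bochner's theorem (Theorem \ref{47}) guarantees that these critical points exhaust $Z((B_k\varphi_\lambda)')$. Now choose $n_{k+1}>n_k$ with $|z_{n_{k+1}}|>\max\{s_1,s_2\}$ and additionally $d(z_{n_{k+1}},z_{n_j})>1-1/(k+1)^3$ for all $j\le k$; such $n_{k+1}$ exists because $|z_n|\to 1$. Set $B_{k+1}=B_k\varphi_{z_{n_{k+1}}}$. By construction, $B_{k+1}|_{Z(B_{k+1}')}$ is injective with simple zeros.

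The limit Blaschke product $B=\lim_k B_k$ exists and is thin: the separation estimates yield $\prod_{j\neq i}d(z_{n_i},z_{n_j})\to 1$ as $i\to\infty$ (via the usual $\epsilon_k=k^{-3}$ bookkeeping), which by the characterization given in Section 2 is exactly thinness. Moreover, on each fixed compact set $D_k$, only finitely many further perturbations move the critical points, and by choosing the tolerance in Lemma \ref{52} summable we ensure the critical points of $B_m$ inside $D_k$ converge as $m\to\infty$ to the critical points of $B$ inside $D_k$, each simple, with the restriction $B|_{Z(B')\cap D_k}$ injective and with critical values bounded away from each other by a positive constant depending only on $k$. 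Exhausting $\mathbb{D}$ by the $D_k$ gives $B|_{Z(B')}$ injective with all simple zeros.

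With these properties in hand, Theorem \ref{41} applies directly to $B$ with $F=\emptyset$, so $M_B$ is irreducible. The main obstacle is the last paragraph: one must guarantee that the injectivity of $B_k|_{Z(B_k')}$ on $D_k$ is not ruined, in the limit, by the infinitely many subsequent multiplicative perturbations $\varphi_{z_{n_j}}$ ($j>k$). This requires at step $k+1$ choosing the tolerance in Lemma \ref{52} small compared to the current minimum separation $\min\{|B_k(w)-B_k(w')|:w\neq w'\in Z(B_k')\}$ and to the distance from $|B_k(w)|$ to $1$; a summable schedule of tolerances then keeps both simplicity and injectivity robust under passage to the limit. The remaining bookkeeping---verifying thinness of the extracted subsequence and the existence of the requisite index $n_{k+1}$---is straightforward from $|z_n|\to 1$.
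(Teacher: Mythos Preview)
Your proposal is correct and follows the same strategy as the paper: inductively build finite Blaschke products $B_k$ using Lemmas \ref{51} and \ref{52} so that $B_k|_{Z(B_k')}$ is injective with simple critical points, arrange that these properties persist in the limit $B$, and invoke Theorem \ref{41} with $F=\emptyset$. The only substantive difference is in how the tail $\prod_{j>k}\varphi_{z_{n_j}}$ is controlled on $D_k$: you impose at each step a growing finite list of smallness conditions (one for every earlier disk) and make them summable, whereas the paper, at stage $m$, passes to an entire subsequence $\{z_{n,m}\}$ satisfying the uniform decay condition (\ref{5.3}), so that the product estimate (5.4) bounds the whole infinite tail on $K_m$ in a single stroke; the paper's nested-subsequence device is tidier because it avoids tracking all past tolerances simultaneously. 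The paper also first extracts a thin subsequence and then refines further, while you fold the thinness separation $d(z_{n_{k+1}},z_{n_j})>1-(k+1)^{-3}$ directly into the same induction---a harmless reorganization.
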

Let us make an observation.  For   $x\in (0,\frac{1}{2})$,
we have $e^{2x}<1+4 x$. Then $$\prod_{n=0}^\infty
(1+\frac{x}{2^n})= \exp(\sum_{n=0}^\infty  \ln (1+\frac{x}{2^n}))<\exp(\sum_{n=0}^\infty   \frac{x}{2^n})
< 1+4x.$$ Take $x= \dfrac{\varepsilon}{2}$, and then
\begin{equation}\prod_{n=1}^\infty (1+\frac{\varepsilon}{2^n})<1+  2\varepsilon, \, 0< \varepsilon <1. \label{5.2}
\end{equation}
Now let $K$ be a compact subset  of $\mathbb{D}$, and $0<\delta<1.$ If $\{\lambda_n\}$ is a Blaschke
sequence  satisfying
\begin{equation} 1-|\lambda_n|\leq \frac{\delta (1-r(K))}{2^{n+2}} ,        \label{5.3}
 \end{equation}
then by (\ref{5.1}) we get $$ |\psi_{\lambda_n}(z)-1| \leq \frac{\delta}{2^{n+1}},\, z\in K.$$
Since   $\prod_n \psi_{\lambda_n}$ defines a Blaschke product, then
by (\ref{5.1})-(\ref{5.3}) one gets
$$\|\prod_n \psi_{\lambda_n}(z)-1\|_{K,\infty} \leq \prod_{n=1}^\infty (1+\frac{\delta}{2^{n+1}} )-1 <\delta.$$
We immediately get that for any Blaschke product $ \phi$, whenever
(\ref{5.3}) holds,
\begin{equation}\| \phi \prod_n \psi_{\lambda_n} - \phi\|_{K,\infty} \leq \delta .\end{equation}
Notice that (5.4) also holds if   $ \{ \lambda_n \}$ is replace with
any subsequence.

Now we are ready to give the construction of $B$.

\noindent \textbf{Proof of Theorem 5.3.} Assume that $\lim\limits_{n\to \infty} |z_n|=1$. First we show that
  $\{z_n\}$ contains  a thin Blaschke subsequence. To see this,
let $d$ denote the hyperbolic metric over the unit disk, and we have $\lim\limits_{n\to \infty} d(z_k,z_n)=1$ for each $k$. 
  Pick two natural numbers $m_1$ and $m_2$ such that  $d(z_{m_1},z_{m_2})>1-\frac{1}{2^2}, $  and put $ w_j=z_{m_j}(j=1,2)$. In general, assume that  we have
  $n$ points:
$$  w_1=z_{m_1},\cdots, w_n=z_{m_n},$$
and $$\prod_{1\leq j<k} d(w_j, w_k)>1-\frac{1}{(k+1)^2},
k=1,2,\cdots, n.$$ Since $\lim\limits_{k\to\infty}\prod_{1\leq j
\leq n} d(w_j, z_k)=1, $ then there is an integer $m_{n+1}$ such that
$m_{n+1}>m_n$
$$\prod_{1\leq j \leq n} d(w_j, z_{m_{n+1} })>1- \frac{1}{(n+2)^2}.$$ Put $w_{n+1}=z_{m_{n+1}}$, and the induction is finished.
That is, we have a subsequence $\{w_n\}$ satisfying
$$\prod_{1\leq j<k} d(w_j, w_k)>1-\frac{1}{(k+1)^2}, k=1,2,\cdots. $$
Now we have
\begin{eqnarray*} \prod_{j;j\neq k} d(w_j, w_k)&=& \prod_{j< k} d(w_j, w_k)\prod_{j> k} d(w_j, w_k) \\
 & \geq &\big(1-\frac{1}{(k+1)^2}\big)\prod_{j> k} \big(1-\frac{1}{(j+1)^2}\big)\to 1, k\to \infty.\end{eqnarray*}
This immediately shows that $\{w_n\}$ is a thin Blaschke sequence,
as desired.

Now we may assume that $\{z_n\}$ is  itself  a thin Blaschke
sequence.
We will find a   subsequence  of $\{z_n\}$ whose Blaschke product
$B$ satisfies the conditions (i) and (ii) in Theorem \ref{41}. For
this, we will construct a sequence $B_m$ of finite Blaschke products
with $\deg B_m=m+1$ and $
 B $ equals the limit of $B_m$.

 Write $$  \varphi = \varphi_{z_1} \varphi_{z_2}, $$
 where $$\varphi_{\lambda}(z)=\frac{ \lambda-z}{1-\overline{\lambda}z}.$$
 By Theorem \ref{47}, $\varphi$ has a  unique critical point,  denoted by $w_1$. Namely, $\varphi'(w_1)=0$.
Rewrite $B_1=\varphi$, and one can give  two closed  disks
$\widetilde{K_1}$,
  $K_1$ in $\mathbb{D}$, and a constant $\delta_1>0$ satisfying the following:
   \begin{itemize}
\item[(1)] $\widetilde{K_1}$ is a closed disk containing $\{z:|z|< \frac{1}{2}\}$ and $w_1$.
\item[(2)]  $\widetilde{K_1} \subseteq \stackrel{\circ} {K_1}$,  $\delta_1 $ is enough small    such that for any holomorphic function $h$ with
  $\|h-B_1\|_{K_1,\infty}<\delta_1$,   $Z(h')\cap \widetilde{K_1}$ contains exactly one point.
  \item[(3)] Taking  $\delta=\delta_1$ and $K=K_1$ in (\ref{5.3}), there is a subsequence $\{z_{n,1}\}_{n=1}^\infty$ of $\{z_n\}_{n=1}^\infty$ satisfying (\ref{5.3}).
 \end{itemize}
 By induction, suppose that we have
already constructed a finite Blaschke product $B_m$ with $\deg
B_m=m+1$ and satisfying the following:
\begin{itemize}
\item[(i)]  $B_m|_{Z (B_m ')}$ is injective;
\item[(ii)] $\widetilde{K_m}$ is a closed disk containing $\{z:|z|<1-\frac{1}{m+1}\}$ and $Z(B_m')$.
\item[(iii)] there is a $\delta_m>0$ and two closed disks $\widetilde{K_m}$ and $K_m$ ($\widetilde{K_m} \subseteq \stackrel{\circ} {K_m}$) in $\mathbb{D}$ such that for any holomorphic function $h$ with
  $\|h-B_m\|_{K_m,\infty}<\delta_m$, the restriction of $h $ on $Z(h')\cap \widetilde{K_m}$ is injective.
  \item[(iv)] Taking  $\delta=\delta_m$ and $K=K_m$ in (\ref{5.3}), there is a subsequence $\{z_{n,m}\}_{n=1}^\infty$ of $\{z_{n,m-1}\}_{n=1}^\infty$ satisfying (\ref{5.3}).
 \end{itemize}
By Lemma \ref{51} there is a  $ \lambda\in  \{z_{n,m}\}_{n=1}^\infty$ with $1-|\lambda|$
enough small such that the restriction of  $B_m \varphi_\lambda$  on
$Z\big((B_m\varphi_\lambda)' \big)$ is injective. Put \linebreak $B_{m+1}=B_m
\psi_\lambda$; that is, $B_{m+1}$ is a constant multiple of $B_m
\varphi_\lambda$. Now set $$r=\max \{\frac{1+r(K_m)}{2},1-\frac{1}{m+2}\}$$
and
 $r'=\frac{1+r}{2}$. Set
 $$\widetilde{K_{m+1}}=\{z:|z|\leq r\} \quad \mathrm{and} \quad  K_{m+1} =\{z:|z|\leq r'\}.$$
 Clearly, $\widetilde{K_{m+1}}\subseteq  K_{m+1}$.
By Lemma \ref{52} we can find a $\delta_{m+1}>0$ such that
 the similar version of (iii) holds for $m+1$.
Taking $$\delta=\delta_{m+1} \ \ \mathrm{and}\ \  K=K_{m+1}$$ in  (\ref{5.3}), there
is a subsequence $\{z_{n,m+1}\}_{n=1}^\infty$ of
$\{z_{n,m}\}_{n=1}^\infty$ satisfying (\ref{5.3}). Considering that
  both   $z_{1,m+1}$ and $\lambda$ lie in $ \{z_{n,m}\}_{n=1}^\infty$, it is required  that $z_{1,m+1}$ is behind $\lambda$.
 Now the induction on $m$ is complete. 

 By our induction, it is clear that  $B_m$   converges to a thin Blaschke product $B$. For each fixed $n$, there is
 a  Blaschke product $\widetilde{B_m}$ such that
 $B=B_m \widetilde{B_m}.$ Let $\{\lambda_k\}$ be the zero   sequence of  $\widetilde{B_m}$
  (of course, $\{\lambda_k\}$ depends on $m$).
By our construction, $\{\lambda_k\}$ is a subsequence of $
\{z_{n,m}\}_{n=1}^\infty $, and then by (iv)  (5.4) holds:
$$ 1-|\lambda_k|\leq \frac{\delta_m (1-r(K_m))}{2^{k+2}} .$$
Since $\widetilde{B_m}=\prod_k \psi_{\lambda_k}$,
then  by (5.4)
$$\|B_m  \widetilde{B_m} -B_m\|_{K_m,\infty} \leq \delta_m .$$
That is, $$\|B -B_m\|_{ K_m ,\infty} \leq \delta_m.$$ Thus by (iii),
$B|_{Z(B')\cap \widetilde{K_m}}$ is injective. Since
$\widetilde{K_m}$ is an increasing sequence of closed domains whose
union is $\mathbb{D}$,
 $B|_{Z(B')}$ is injective.
  Applying Lemma \ref{52} shows that $B'|_{\widetilde{K_m}}$ has only simple zeros for each $m$, and hence $B'$ has only simple zeros.
  Thus, $B$ satisfies the assumptions of Theorem \ref{41}, with $F$ being empty.
  Therefore, the construction of $B$ is complete and the proof is finished.
  $\hfill \square$

 \section{Geometric characterization  for when $\mathcal{V}^*(B)$ is nontrivial}
~~~~ In this section, we will give a geometric characterization for
when $\mathcal{V}^*(B)$ is nontrivial, as follows. Its proof is delayed.
\begin{thm} Suppose that $B $ is  a thin Blaschke product satisfying \linebreak $\overline{Z(B)}\nsupseteq \mathbb{T}$.
Then $\mathcal{V}^*(B)$ is nontrivial if and only if    the
Riemann surface $S_B$ has a nontrivial component with  \label{62}
finite multiplicity.
 \end{thm}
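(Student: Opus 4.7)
The necessity is immediate from the contrapositive of Proposition \ref{37}: if $\mathcal{V}^*(B)$ is nontrivial then $S_B$ admits a nontrivial component $G[\rho]$ with $\sharp[\rho]<\infty$, and this direction does not use the hypothesis $\overline{Z(B)}\nsupseteq\mathbb{T}$. For the sufficiency, suppose $G[\rho]$ is a nontrivial component with $\sharp[\rho]=n<\infty$. My plan is to construct a nonscalar element $S\in\mathcal{V}^*(B)$ directly from the $n$ local inverses in the class $[\rho]$.

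Labelling those inverses $\rho_1,\dots,\rho_n$ (as multi-valued branches on $E$), I would set
$$(Sh)(z)=\sum_{j=1}^n h(\rho_j(z))\,\rho_j'(z),\qquad z\in E,\ h\in L^2_a(\mathbb{D}).$$
Because $\pi_\rho:G[\rho]\to E$ is an $n$-sheeted covering (Lemma~\ref{36}), monodromy along loops in $E$ merely permutes the $\rho_j$'s, so the sum is single-valued and holomorphic on $E$. Near each point of $B^{-1}(\mathcal{E}_B)$ the individual $\rho_j$'s have at worst algebraic singularities, but their symmetric combination stays locally bounded, so $Sh$ extends holomorphically to all of $\mathbb{D}$ by Riemann's removable-singularity theorem. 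Since $\pi_2:G[\rho]\to E$ is also an $n$-sheeted cover (by the symmetry of the defining relation $B(z)=B(w)$), each $w\in E$ is attained exactly $n$ times by $\{\rho_j(z)\}_{z\in E,\,j=1,\dots,n}$, so Cauchy--Schwarz combined with a sheet-by-sheet change of variables gives
$$\|Sh\|^2\le n\int_E\sum_{j=1}^n|h(\rho_j(z))|^2|\rho_j'(z)|^2\,dA(z)=n^2\|h\|^2,$$
showing $S$ is bounded. Commutation $SM_B=M_BS$ is immediate from $B\circ\rho_j=B$, and $S\notin\mathbb{C}I$ because some $\rho_j$ is not the identity: testing against an appropriate reproducing kernel or a low-order polynomial then distinguishes $Sh$ from any scalar multiple of $h$.

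The step I expect to be the main obstacle, and the one for which the hypothesis $\overline{Z(B)}\nsupseteq\mathbb{T}$ is really needed, is the verification that $S$ also commutes with $M_B^*$. The hypothesis supplies an open arc $I\subseteq\mathbb{T}\setminus\overline{Z(B)}$ across which $B$ extends analytically via Schwarz reflection with $|B|=1$ on $I$, and across which each branch $\rho_j$ in turn extends continuously with $|\rho_j|=1$ on $I$. This boundary regularity should allow a transplantation of the classical finite-Blaschke-product calculation to the thin setting: testing against reproducing kernels and using $M_B^*k_w=\overline{B(w)}k_w$ together with $S^*k_w=\sum_j\overline{\rho_j'(w)}\,k_{\rho_j(w)}$, the boundary extension is exactly what justifies the required limiting operations and rules out defect contributions arising from the portions of $\mathbb{T}$ on which the zeros of $B$ do accumulate. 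Without this boundary hypothesis one would lose that control, which explains why the characterization is stated only under this mild side condition.
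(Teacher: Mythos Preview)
Your construction of $S$ is exactly the operator the paper calls $\mathcal{E}_{[\rho]}$, so the overall shape of the sufficiency argument is right. The genuine gap is the sentence ``$\pi_2:G[\rho]\to E$ is also an $n$-sheeted cover (by the symmetry of the defining relation $B(z)=B(w)$)''. The involution $(z,w)\mapsto(w,z)$ is a symmetry of the whole surface $S_B$, but it sends the component $G[\rho]$ to $G[\rho^-]$ (Proposition~\ref{61}), which need not be $G[\rho]$ itself. Consequently the sheet number of $\pi_2|_{G[\rho]}$ is $\sharp[\rho^-]$, not $\sharp[\rho]$, and nothing you have written forces $\sharp[\rho^-]<\infty$. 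Your change-of-variables estimate for $\|Sh\|^2$ therefore breaks down: a fixed $w\in E$ may be hit by infinitely many pairs $(z,\rho_j(z))$. The paper confronts this directly by working under the \emph{bi-finite} hypothesis $\sharp[\rho]<\infty$ and $\sharp[\rho^-]<\infty$ (Lemma~\ref{63}); once that is in place, the same change of variables gives boundedness and, more importantly, the explicit identity $\mathcal{E}_{[\rho]}^*=\mathcal{E}_{[\rho^-]}$, from which $S^*M_B=M_BS^*$ is immediate because $\mathcal{E}_{[\rho^-]}$ visibly commutes with $M_B$. No boundary-arc argument is needed for this step.

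You have also inverted the role of the hypothesis $\overline{Z(B)}\nsupseteq\mathbb{T}$. In the paper it is \emph{not} used to produce a nontrivial operator from a finite component; Lemma~\ref{63} does that unconditionally (given bi-finiteness). The hypothesis enters on the other side, through Lemma~\ref{64}: one shows that $\widetilde{M_B}$ on $L^2(\mathbb{D})$ is the minimal normal extension of $M_B$, using Runge's theorem on the complement of $\overline{B^{-1}\circ B(U)}$, which is where the condition $\overline{Z(B)}\nsupseteq\mathbb{T}$ is actually consumed. This yields, for any $S\in\mathcal{V}^*(B)$ with expansion $\sum c_\rho\mathcal{E}_{[\rho]}$, that every nonzero coefficient forces \emph{both} $\sharp[\rho]$ and $\sharp[\rho^-]$ to be finite. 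Your Schwarz-reflection sketch aims at the wrong target and would in any case run into the difficulty that the images $\rho_j(z)$ need not approach the good arc even when $z$ does.
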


We follow the notations and terms in Section 3. It has been  shown
that for any local inverse $\rho$ of $B$,
 $ G[\rho] $ is a Riemann surface, and $\pi_\rho: G[\rho]\to E$ is a covering map. The number   $\sharp (\pi_\rho )^{-1}(z)=\sharp [\rho](z)$,
 is called  \emph{the number of sheets for} $G[\rho]$, also denoted by  $\sharp [\rho]$. Clearly,  the
inverse map $\rho^{-1}$  of $\rho$ is also a local inverse   of $B$,
and we rewrite $\rho^-$ for $\rho^{-1}$. If both $\sharp [\rho]$ and
$\sharp [\rho^-]$ are finite, then we say that the component
$G[\rho]$ of $S_B$ has \emph{ bi-finite multiplicities}.

For any subset $F$ of $E\times E$, we write $F^-=\{(w,z) : (z,w)\in
F \}.$ Thus,
  $$G^-[\rho]=\{(w,z) : (z,w)\in G[\rho] \}.$$
We have a geometric characterization for $G[\rho^-]$  as follows.
\begin{prop}For each local inverse $\rho$, we have $ G[\rho^-]=G^-[\rho]$. \label{61}
\end{prop}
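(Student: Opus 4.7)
The plan is to verify both inclusions $G^{-}[\rho]\subseteq G[\rho^{-}]$ and $G[\rho^{-}]\subseteq G^{-}[\rho]$ by exploiting the covering-map structure of $\pi_{\rho}\colon G[\rho]\to E$ established in Lemma~\ref{36} together with the fact that the operation $\sigma\mapsto \sigma^{-1}$ commutes with analytic continuation of local inverses.

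First I would record a preliminary observation: for every $\sigma\in[\rho]$, the relation $B\circ\sigma=B$ and the fact that $B'$ never vanishes on $E$ force $\sigma'\neq 0$ on $D(\sigma)$ and $\sigma(D(\sigma))\subseteq E$. Hence each such $\sigma$ is biholomorphic onto its image in $E$, so $\sigma^{-1}$ is itself a local inverse of $B$ with domain in $E$. In particular, $\rho^{-}$ really is a legitimate element giving rise to a component $G[\rho^{-}]$ of $S_{B}$.

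For the inclusion $G^{-}[\rho]\subseteq G[\rho^{-}]$, I would fix $(w,z)\in G^{-}[\rho]$, so $(z,w)\in G[\rho]$ with $w=\sigma(z)$ for some $\sigma\in[\rho]$. Choose a base point $z_{0}\in D(\rho)$ and a path $\gamma\colon[0,1]\to E$ from $z_{0}$ to $z$ along which analytic continuation of $\rho$ produces $\sigma$. By Lemma~\ref{36} the path $\gamma$ lifts uniquely to $\widetilde{\gamma}\colon[0,1]\to G[\rho]$ with $\widetilde{\gamma}(0)=(z_{0},\rho(z_{0}))$ and $\widetilde{\gamma}(1)=(z,w)$. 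Set $\gamma'=\pi_{2}\circ\widetilde{\gamma}$; this is a path in $E$ (since $G[\rho]\subseteq E\times E$) from $\rho(z_{0})$ to $w$. Now $\rho^{-}$ is holomorphic near $\rho(z_{0})$, and the monodromy theorem provides a unique analytic continuation $\tau$ of $\rho^{-}$ along $\gamma'$ to a local inverse of $B$ defined near $w$. Differentiating $B\circ\sigma=B$ along the lift $\widetilde{\gamma}$ shows that this continuation must agree with $\sigma^{-1}$ at each stage (by the uniqueness clause of analytic continuation), so $\tau=\sigma^{-1}\in[\rho^{-}]$ and therefore $(w,z)=(w,\tau(w))\in G[\rho^{-}]$.

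The reverse inclusion $G[\rho^{-}]\subseteq G^{-}[\rho]$ follows formally by applying what we have just proved with $\rho$ replaced by $\rho^{-}$: we obtain $G^{-}[\rho^{-}]\subseteq G[(\rho^{-})^{-}]=G[\rho]$, and taking the $(\cdot)^{-}$ operation on both sides yields $G[\rho^{-}]\subseteq G^{-}[\rho]$. The only delicate point in the whole argument is the identification $\tau=\sigma^{-1}$, but once one recognises that both are analytic continuations, along $\gamma'$, of the same germ $\rho^{-}$ at $\rho(z_{0})$, uniqueness of analytic continuation \cite[Theorem~16.11]{Ru1} closes the matter.
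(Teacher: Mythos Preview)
Your argument is correct and follows essentially the same route as the paper's proof: both establish $G^{-}[\rho]\subseteq G[\rho^{-}]$ by analytically continuing $\rho$ along a path $\gamma$ in $E$, pushing forward to the image path (your $\gamma'=\pi_{2}\circ\widetilde{\gamma}$, the paper's $\widetilde{\rho}(\gamma)$), continuing $\rho^{-}$ along that image path, and invoking uniqueness of analytic continuation to identify the result with the inverse of the continued $\rho$; the reverse inclusion is then obtained in both proofs by the same formal trick of replacing $\rho$ by $\rho^{-}$ and applying $(\cdot)^{-}$. The only cosmetic difference is that you phrase the lift via the covering map of Lemma~\ref{36}, while the paper works directly with analytic continuation; the content is the same.
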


\begin{proof}
Suppose that $z_0\in D(\rho)$. Clearly, $\rho^-\big(
\rho(z_0)\big)=z_0.$ For any curve $\gamma$ that connects $z_0$ with
$z(z\in E)$, suppose that $\widetilde{\rho}$ is the analytic continuation of $\rho$ along $\gamma$, and then
$\widetilde{\rho}(\gamma)$ is a curve connecting $\rho(z_0)$ and $\widetilde{\rho}(z)$. Let $\widetilde{\rho^-}$ be the analytic
continuation of $\rho^-$
 along $\widetilde{\rho}(\gamma)$.
Since $\rho^-\big( \rho(z_0)\big)=z_0,$   then  $$\widetilde{\rho^-} (\widetilde{\rho}  )=id$$
 holds on the curve $\gamma$. In particular,
 $\widetilde{\rho^-}\big( \widetilde{\rho}(z)\big)=z$, forcing $$( \widetilde{\rho}(z),z )\in G[\widetilde{\rho^-}]=G[\rho^-]  .$$
 Therefore,
$$G^-[\rho]\subseteq G[\rho^-].$$
Therefore, $$\{G^-[\rho]\}^- \subseteq G^-[\rho^-]  \subseteq
G[\{\rho^-\}^-],$$ Clearly,
$$ G[\rho]  \subseteq G^-[\rho^-]  \subseteq G[ \rho ].$$
which immediately gives that $G^-[\rho^-]=G[\rho]$, and hence
$G[\rho^-]=G^-[\rho]$.
\end{proof}
 Since  $\pi_{\rho^-}:G[\rho^-]\to E $ is a covering map, then by Proposition \ref{61}    $\pi_2:G[\rho]\to E, (z,w) \to w$ is also a covering map, and
 the multiplicity of $\pi_{\rho^-} $ (=$\sharp [\rho^-]$) equals that of $\pi_2|_{G[\rho ]} $.
Notice that   $S_B$ has a nontrivial component with bi-finite
multiplicities if and only if there exist a nontrivial local inverse
$\rho$ of $B$,
     such that both  covering maps $\pi_\rho:G[\rho]\to E$ and $\pi_2:G[\rho]\to E$ have finite multiplicities.

Before continuing, we need some preparations. If   $ \sharp [\rho]
=n<\infty$, then $\pi_\rho :G[\rho ]\to E $ is a covering map of finite
multiplicity. Then for each $z\in E$, there is a small disk
$\Delta$ containing $z$  and $n$ local inverse $\rho_i(1\leq i \leq
n)$ defined on $\Delta$
 such that   $$\pi_\rho^{-1}(\Delta)=\bigsqcup_{i=1}^n \{(w,\rho_i(w)):w\in \Delta\}.$$
In this case, we set $$  \sum_{ \sigma\in [\rho]} h\circ \sigma (w)
\,  \sigma' (w)\triangleq
 \sum_{i=1}^n h\circ \rho_i (w) \,  \rho_i ' (w), w\in \Delta,$$ where $h$ is any function over $E$ or $\mathbb{D}$.
Then   define a linear map $\mathcal{E}_{[\rho]} $ by
 $$\mathcal{E}_{[\rho]}h(z) \equiv \sum_{ \sigma\in [\rho]} h\circ \sigma (z) \,  \sigma' (z) ,\, z\in E.$$
One can check that the map $\mathcal{E}_{[\rho]} $ is well defined,
i.e. the value $\mathcal{E}_{[\rho]}h(z)$
 does not depend  on the choice of disks $\Delta$ containing $z$.
The functions $h$ over $E$ or $\mathbb{D}$ are not necessary
holomorphic. Sometimes, we write
$$ \mathcal{E}_{[\rho]}h =\sum_{\rho_i\in [\rho]}h\circ \rho_{i } \rho_{i }'  $$
to emphasize that this $\mathcal{E}_{[\rho]} $ maps   holomorphic
functions to holomorphic functions.
 As a special case, the trivial component $[z]$ gives the identity operator $I$, i.e. $ \mathcal{E}_{[z]} =I$.
 We mention that those operators $\mathcal{E}_{[\rho]} $  first appeared in \cite{DSZ}.
Similarly, we  define $$\mathcal{E}_{[| \rho|]}h(z)=\sum_{\rho_i\in
[\rho]}h\circ \rho_{i }(z)| \rho_{i }'(z) |. $$

It is well know that  a square-summable holomorphic function on a
punctured disk (a disk minus one internal point) can always be
extended analytically to the whole disk. Also notice   that $
\mathbb{D}-E$ is discrete in $ \mathbb{D},$  and thus each function
$f $ in $ L^2_a( E)$ can be   extended analytically to the unit disk
$\mathbb{D}$; that is, $f$ can be regarded as a member in $L^2_a(
\mathbb{D})$. In this sense, we write
$$ L^2_a( E)=L^2_a( \mathbb{D}).$$

 As above mentioned,  $\mathcal{E}_{[ \rho]}$ maps   holomorphic functions  to
holomorphic functions. Soon, one will see that
 $\mathcal{E}_{[ \rho]}$ defines a linear map from $ L^2_a( \mathbb{D})$ to $ L^2_a( E)$. Moreover, the map
  $\mathcal{E}_{[ \rho]}:  L^2_a( \mathbb{D})\to  L^2_a( \mathbb{D}) $ is a bounded operator, which is given by the following.
\begin{lem}If $G[\rho]$ is a component of $S_B$  with  bi-finite multiplicities, then both \label{63}
$\mathcal{E}_{[\rho]} $  and $\mathcal{E}_{[\rho^-]} $  are in
$\mathcal{V}^*(B)$, and $\mathcal{E}_{[\rho]}^*=\mathcal{E}_{[\rho^-]}$.
\end{lem}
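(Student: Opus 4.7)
The plan is to verify three properties of $\mathcal{E}_{[\rho]}$: boundedness on $L^2_a(\mathbb{D})$, commutation with both $M_B$ and $M_B^*$, and the adjoint identity $\mathcal{E}_{[\rho]}^*=\mathcal{E}_{[\rho^-]}$. All three will reduce, via local trivializations of the two finite-sheeted covering projections $\pi_1,\pi_2\colon G[\rho]\to E$, to the classical holomorphic change-of-variables formula $dA(w)=|\sigma'(z)|^2\,dA(z)$ for $w=\sigma(z)$. By hypothesis $n\triangleq\sharp[\rho]$ and $m\triangleq\sharp[\rho^-]$ are both finite, and Proposition \ref{61} identifies $G[\rho^-]$ with $G^-[\rho]$, so the members of $[\rho]$ and $[\rho^-]$ are locally inverse to one another.

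For boundedness, Cauchy--Schwarz gives $|\mathcal{E}_{[\rho]}f(z)|^2\le n\sum_{\sigma\in[\rho]}|f(\sigma(z))|^2|\sigma'(z)|^2$. On any disk $\Delta\subset E$ trivializing $\pi_1$ with branches $\sigma_1,\dots,\sigma_n$, the substitution $w=\sigma_i(z)$ turns $\int_\Delta |f\circ\sigma_i|^2|\sigma_i'|^2\,dA$ into $\int_{\sigma_i(\Delta)}|f|^2\,dA$. Covering $E$ by such disks and using that $\pi_2$ is $m$-to-one onto its image in $E$, the resulting sum yields $\|\mathcal{E}_{[\rho]}f\|_{L^2_a(E)}^2\le nm\|f\|^2$. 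Since $\mathbb{D}\setminus E$ is discrete, $\mathcal{E}_{[\rho]}f$ extends holomorphically across it, so $\mathcal{E}_{[\rho]}\in B(L^2_a(\mathbb{D}))$. The adjoint identity rests on the same local picture: with $\tau_i=\sigma_i^{-1}\in[\rho^-]$ and $\sigma_i'(z)=1/\tau_i'(w)$, one has $\sigma_i'(z)\,dA(z)=\overline{\tau_i'(w)}\,dA(w)$ under $w=\sigma_i(z)$, hence
\[
\int_\Delta f(\sigma_i(z))\,\sigma_i'(z)\,\overline{g(z)}\,dA(z)
=\int_{\sigma_i(\Delta)}f(w)\,\overline{\tau_i'(w)\,g(\tau_i(w))}\,dA(w).
\]
Summing over $i$ and patching the trivializations, and noting by Proposition \ref{61} that the inverse branches $\tau_i$ exhaust $[\rho^-]$, gives $\langle\mathcal{E}_{[\rho]}f,g\rangle=\langle f,\mathcal{E}_{[\rho^-]}g\rangle$; in particular $\mathcal{E}_{[\rho^-]}$ is bounded as well.

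Commutation with $M_B$ follows immediately: since $B\circ\sigma=B$ for every $\sigma\in[\rho]$, direct substitution gives $\mathcal{E}_{[\rho]}(Bf)=B\cdot\mathcal{E}_{[\rho]}f$, so $\mathcal{E}_{[\rho]}M_B=M_B\mathcal{E}_{[\rho]}$, and the same holds for $[\rho^-]$. Taking adjoints of $\mathcal{E}_{[\rho^-]}M_B=M_B\mathcal{E}_{[\rho^-]}$ and invoking the adjoint identity yields $M_B^*\mathcal{E}_{[\rho]}=\mathcal{E}_{[\rho]}M_B^*$, so $\mathcal{E}_{[\rho]}\in\{M_B,M_B^*\}'=\mathcal{V}^*(B)$, and symmetrically for $\mathcal{E}_{[\rho^-]}$. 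The principal obstacle is the bookkeeping in the adjoint computation: one must verify that as the $n$ branches $\sigma_i$ vary over a $\pi_1$-trivializing cover of $E$, the inverse branches $\tau_i=\sigma_i^{-1}$ really do reassemble, without overcounting, into a $\pi_2$-trivializing cover whose local sheets together exhaust $[\rho^-]$. Proposition \ref{61} is precisely what legitimizes this reassembly.
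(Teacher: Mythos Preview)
Your proposal is correct and follows essentially the same strategy as the paper: Cauchy--Schwarz plus the holomorphic change of variables gives boundedness with constant depending on $n\cdot m$, and the identity $\sigma'(z)\,dA(z)=\overline{\tau'(w)}\,dA(w)$ under $w=\sigma(z)$ yields the adjoint relation, with Proposition~\ref{61} guaranteeing that the inverse branches reassemble correctly into $[\rho^-]$. The paper's proof differs only in execution: it works first with compactly supported continuous functions on $E$ (not just holomorphic ones), uses an explicit disjoint decomposition $E=\bigsqcup_j E_j$ into pieces admitting complete local inverses, and for the adjoint computation localizes $h$ to a single image patch $\rho_{i_0}(U)$ so that the $m$ sheets of $[\rho^-]$ over that patch can be enumerated directly; this makes the ``bookkeeping'' you flag entirely explicit rather than left to the phrase ``patching the trivializations.''
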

\begin{proof}
Assume that  $G[\rho]$ is a component of $S_B$  with  bi-finite
multiplicities. First we will show that $\mathcal{E}_{[\rho]} $  is
bounded. To see this, let $h\in C_c(E)$; that is, $h$ is a compactly
supported continuous function  over $E$.
 For each open set $U$ that admits a complete local inverse $\{\rho_i\}_{i=0}^\infty$,
    we have \begin{eqnarray}
    \int_U |\mathcal{E}_{[\rho ]}h(z)|^2dA(z) &\leq &   C \int_U  \sum_{\rho_i\in [\rho]}
  | h\circ \rho_{i}(z) |^2 |\rho_{i}'(z)|^2 dA(z) \nonumber \\
   &=& C\sum_{\rho_i\in [\rho]} \int_{\rho_i(U)}  | h(z)|^2   dA(z) ,            \label{6.1}
\end{eqnarray}
where $C$ is a numerical constant that depends only on $n $.

 Since $E$ is an open set, it can be represented as the union of countable open disks, each admitting a complete local inverse.
The reasoning is as follows. For each $z\in E$, there is an
$r=r(z)>0$ such that the disk $O(z,r(z))$  admits a complete local
inverse. We  can choose a   rational number $r'=r'(z)$  with $\frac{r}{3}<r'<
\frac{r}{2}$ and a point $z'$ in $ E$ with rational coordinates such that
$|z-z'|<\frac{r}{3}$. Then it is easy to verify that $$z\in O(z', r')
\subseteq O(z,r(z)).$$ All such disks $O(z',r') $ consist of a countable
covering of $E$. Also, each member $O(z',r') $ admits a complete
local inverse.

Now assume that $\{U_j\}$ is   a covering of  $E$ and each $U_j$
admits a complete local inverse   $\{\rho_i\}_{i=0}^\infty$. We
emphasize that the family $\{\rho_i\}_{i=0}^\infty$ is fixed once
their common domain $U_j$ of definition is fixed. Set $$E_1=U_1
\quad \mathrm{and } \quad E_j=U_j-\bigcup_{1\leq i<j }U_i \, (j\geq
2).$$ By the same reasoning    as (\ref{6.1}), we also have
$$ \int_{E_j} \Big|\mathcal{E}_{[|\rho | ]}|h|(z)\Big|^2dA(z) \leq  C\sum_{\rho_i\in [\rho]} \int_{\rho_i(E_j)}  | h(z)|^2   dA(z)
 \equiv C\sum_{\rho_i\in [\rho]}\mu (\rho_i(E_j)), $$
where  $\mu$ is a measure defined by $\mu (F)=\int_F |h(z)|^2dA(z)$
for each  Lebesgue measurable subset $F$ of $\mathbb{D}$. Since
$E=\cup U_j=\sqcup E_j$, then
$$\int_{E } \Big|\mathcal{E}_{[|\rho| ]}|h|(z)\Big|^2dA(z) \leq C\sum_{j=1}^\infty \sum_{\rho_i\in [\rho]} \mu (\rho_i(E_j)).$$
Write $m=\sharp [\rho^-]<\infty.$ We need a  fact in measure theory: for measurable sets $F$ and $F_j$, if $\cup_{j=1}^k F_j
\subseteq F$, and for each point $z\in F$, there are at most $m$
sets  $F_j$ such that $z\in F_j$, then  $$\sum_{j=1}^k \chi_{F_j}\leq m
\chi_F,$$ and hence $\sum_{j=1}^k \mu (F_j)\leq m
\mu (F).$ Below, we will apply this result. By our choice of $E_j $
($E_j\subseteq U_j$), it follows from Proposition \ref{61} that   for any
$w\in E$, there are at most $m$ sets $E_j$  such that
 $w\in \rho_i(E_j)$ for some $\rho_i\in [\rho].$  Therefore,
 $ \sum_{\rho_i\in [\rho]}\sum_{j=1}^k \mu(\rho_i(E_j))    \leq m \mu(E).$ Letting $k$ tends to infinity shows that
 $$  \sum_{\rho_i\in [\rho]}\sum_{j=1}^\infty \mu(\rho_i(E_j))  \leq m \mu(E);$$
and thus
 $$\int_{\mathbb{D}} \Big|\mathcal{E}_{[|\rho |]}|h|(z)\Big|^2dA(z)\leq mC \|h\|^2.$$
 By a limit argument, it is easy to see that the above identity also holds for any $h\in C(\mathbb{D})$,
  which immediately gives that
 $$\int_{\mathbb{D}} |\mathcal{E}_{[ \rho  ]}f(z)|^2dA(z)\leq \int_{\mathbb{D}} \Big|\mathcal{E}_{[|\rho |]}|f|(z)\Big|^2dA(z)\leq mC \|f\|^2, f\in L^2_a(\mathbb{D}).$$
 That is, $\mathcal{E}_{[ \rho  ]}:  L^2_a(\mathbb{D})\to  L^2_a(E)$ is a bounded operator.
 By the comments above  Lemma \ref{63}, $\mathcal{E}_{[  \rho ]}$ can be regarded as a bounded operator from
 $ L^2_a( \mathbb{D})$ to $ L^2_a( \mathbb{D}) $.
Similarly, $\mathcal{E}_{[  \rho ^-]}$ is a bounded operator from
 $ L^2_a( \mathbb{D})$ to $ L^2_a( \mathbb{D}) $.

Below we will show that  $
\mathcal{E}_{[\rho]}^*=\mathcal{E}_{[\rho^-]}.$
Now let $U$ be a fixed open subset of $E$ admitting a complete local
inverse,
    $h$ be a bounded (measurable) function whose support is contained in $\rho_{i_0}(U)$ for some $i_0$, and $g\in C_c(E).$
 Since  $ m=\sharp[\rho^-] $, then we may write
  \begin{equation}[\rho^-]\rho_{i_0}(U) = \bigsqcup_{j=1}^m \check{\rho_j}(U). \label{6.2}
  \end{equation}
  That is, on $\rho_{i_0}(U)$ there are $m$ members $\rho_{k_j}(1\leq j \leq m)$ in $[\rho^-]$,
  and there are $m$ integers $k_j'(1\leq j \leq m)$ such that
   $$\rho_{k_j}\circ \rho_{i_0}(z)= \rho_{k_j'}(z),\, z\in U.$$
   Rewrite $\check{\rho_j}$ for $\rho_{k_j'}(1\leq j \leq m)$, and then we have (\ref{6.2}).

 Then for each $j(1\leq j\leq m), $ there is a   local inverse $\sigma_j  : \check{\rho_j}(U) \to \rho_{i_0}(U) $ in $   [\rho]$ and $\sigma_j$
 is onto.  Since $ \sigma_j\in [\rho] $ and $h$ is supported on $\rho_{i_0}(U),$ then
  \begin{eqnarray*}\int_{\check{\rho_{j}}(U)} &\sum_{\rho_k \in [\rho]} & h\circ \rho_{k}(z) \rho_{k}'(z)\overline{ g(z)}dA(z)
    =      \int_{\check{\rho_{j}}(U)} h\circ \sigma_j(z) \sigma_j'(z)\overline{ g(z)}dA(z)\\
   &=&   \int_{\rho_{i_0}(U)} h\circ \sigma_j(\sigma_j^-(w)) \sigma_j'(\sigma_j^-(w)) | \sigma_j^{-'}(w)|^2  \overline{g(\sigma_j^-(w))}dA(w)  \\
&=&   \int_{\rho_{i_0}(U)}  h(w)   \overline{  g(\sigma_j^-(w))
\sigma_j^{-'}(w)}  dA(w).
\end{eqnarray*}
  That is,
$$\int_{\check{\rho_{j}}(U)} \sum_{\rho_k \in [\rho]} h\circ \rho_{k}(z) \rho_{k}'(z)\overline{ g(z)}dA(z)
    =  \int_{\rho_{i_0}(U)}  h(w)   \overline{  g(\sigma_j^-(w)) \sigma_j^{-'}(w)}  dA(w).$$
Since $h$ is  supported   in $\rho_{i_0}(U)$, then by (\ref{6.2})
one can verify that
  $$ \mathcal{E}_{[\rho]}h=
\sum_{\rho_i\in [\rho]}\sum_{j=1}^m \chi_{\check{\rho_j}(U)}h\circ
\rho_i \rho_i',$$ and hence
\begin{eqnarray*}
 \langle    \mathcal{E}_{[\rho]} h, g \rangle &=&
 \sum_{j=1}^m  \int_{\check{\rho_{j}}(U)}\sum_{\rho_i\in [\rho]}  h\circ \rho_{i}(z) \rho_{i}'(z)\overline{ g(z)}dA(z)  \\
&=&    \sum_{j=1}^m \int_{\rho_{i_0}(U)}  h(w)   \overline{  g(\sigma_j^-(w)) \sigma_j^{-'}(w)}  dA(w). \\
&=& \int_{\mathbb{D}}  h(w)  \chi_{\rho_{i_0}(U)} \overline{\sum_{\sigma \in [\rho^-]} g(\sigma(w))\sigma'(w) }  dA(w) \\
&=&   \langle   h,  \mathcal{E}_{[\rho^-]}g \rangle   .
\end{eqnarray*}
That is, $$ \langle    \mathcal{E}_{[\rho]} h, g \rangle=\langle   h,  \mathcal{E}_{[\rho^-]}g \rangle   . $$
Since each compact subset of $E$ is always contained in the
union of finite open sets like $\rho_{i_0}(U)$, the above identity
also holds for any $h\in C_c(E).$ Then by a limit argument, the
above holds for any $g,h\in L^2_a(\mathbb{D})$. Therefore, $
\mathcal{E}_{[\rho]}^*=\mathcal{E}_{[\rho^-]}.$ Moreover, since both
$\mathcal{E}_{[\rho^-]}$ and $\mathcal{E}_{[\rho]}$  commute with
$M_B$, then $\mathcal{E}_{[\rho]}\in \mathcal{V}^*(B)$. The proof is complete.
\end{proof}

In Theorem \ref{32}, we get a global  representation for unitary
operators in $\mathcal{V}^*(B)$, as follows:
$$ Sh(z)=\sum_{k=0}^\infty c_k  h\circ \rho_k(z) \rho_k'(z),
 \ h\in L^2_a( \mathbb{D}),\, z\in \mathbb{D}-L . $$
 By Lemma \ref{35}, we may rewrite $S$ as
 \begin{equation}
 Sh(z)=\sum_{ [\rho]}\sum_{\sigma \in [\rho]} c_\rho  h\circ \sigma(z) \sigma'(z) \equiv \sum_{ [\rho]}  c_\rho \mathcal{E}_{[\rho]} h(z)
 ,\ h\in L^2_a( \mathbb{D}) ,\, z\in E  . \label{6.3}
 \end{equation}

To give the proof of Theorem \ref{62}, we also need to establish the following, whose proof is delayed.
\begin{lem} Given a thin Blaschke product $B$, suppose that $S $ is an operator in  $\mathcal{V}^*(B)$ with the   form (\ref{6.3}).
If    $\overline{Z(B)}\nsupseteq \mathbb{T}$, then  $$S^*h(z)=\sum_{ [\rho]} \overline{c_\rho}  \mathcal{E}_{[\rho^-]}
h(z),  \ h\in L^2_a( \mathbb{D}),\, z\in E  .$$  \label{64}
Furthermore, for each $\rho$ such that $c_{\rho}\neq 0$, we have $\sharp
[\rho]<\infty $ and $\sharp [\rho^-]<\infty$.
 \end{lem}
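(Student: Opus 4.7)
The plan is to treat the adjoint formula and the bi-finiteness claim as two interlocking steps: reduce the adjoint identity to Lemma \ref{63} once bi-finiteness is known, and establish bi-finiteness by exploiting the free boundary arc supplied by the hypothesis $\overline{Z(B)}\nsupseteq\mathbb{T}$.

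Assume temporarily that $\sharp[\rho]<\infty$ and $\sharp[\rho^-]<\infty$ for every class with $c_\rho\neq 0$. Lemma \ref{63} then gives that each such $\mathcal{E}_{[\rho]}$ is a bounded operator in $\mathcal{V}^*(B)$ satisfying $\mathcal{E}_{[\rho]}^{\,*}=\mathcal{E}_{[\rho^-]}$. Combined with $\{c_\rho\}\in\ell^2$ from Theorems \ref{33}--\ref{34}, truncating (\ref{6.3}) to finite sums, taking adjoints termwise, and passing to the weak-operator limit yields the asserted identity $S^*h(z)=\sum_{[\rho]}\overline{c_\rho}\,\mathcal{E}_{[\rho^-]}h(z)$ on $E$. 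So the whole problem reduces to proving bi-finiteness.

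For bi-finiteness, I would fix an open arc $I\subset\mathbb{T}\setminus\overline{Z(B)}$; since $|B|=1$ on $\mathbb{T}$ and the zeros of $B$ stay away from $I$, Schwarz reflection extends $B$ holomorphically across $I$, and Proposition \ref{21} shows that the extension has no critical points in a thin one-sided tube $U\subset\mathbb{D}$ adjacent to $I$. Every local inverse $\sigma\in[\rho]$ whose graph meets $U$ therefore extends biholomorphically past $I$, and a Koebe-type distortion estimate supplies a uniform lower bound $|\sigma'(z_0)|\geq\eta>0$ once $z_0\in U$ is chosen sufficiently close to $I$. Meanwhile, Proposition \ref{24} gives that the fibre $\{\rho_k(z_0)\}$ is interpolating with constants uniform in $z_0$ near $I$, so one can build $h_{i_0}\in H^\infty$ with $h_{i_0}(\rho_{i_0}(z_0))=1$, $h_{i_0}(\rho_k(z_0))=0$ for $k\neq i_0$, and $\|h_{i_0}\|_\infty$ uniformly bounded. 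Evaluating $Sh_{i_0}(z_0)=c_\rho\,\rho_{i_0}'(z_0)$ against the Bergman reproducing kernel, and then combining many indices by the linear combination $h=\sum_{i_0}\alpha_{i_0}h_{i_0}$ with $\{\alpha_{i_0}\}\in\ell^2$, I would extract a Bessel-type inequality
\[
|c_\rho|^2\sum_{\sigma\in[\rho]}|\sigma'(z_0)|^2\leq C(z_0)\,\|S\|^2,
\]
which, combined with the Koebe lower bound, forces $\sharp[\rho]<\infty$. The symmetric claim $\sharp[\rho^-]<\infty$ follows by running the same argument on $S^*\in\mathcal{V}^*(B)$, whose representation is obtained from $S$ by transposing and conjugating the matrix $W$ of Theorem \ref{34}.

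The main obstacle will be the Bessel-type bound displayed above: a single-index application of the reproducing kernel gives only $|c_\rho|\,|\sigma'(z_0)|\leq C\,\|S\|$, which does not bound $\sharp[\rho]$, so one must sum many interpolating test functions coherently, using that the $h_{i_0}$ behave like a Riesz sequence in $L^2_a$ by virtue of the interpolating property of $\{\rho_k(z_0)\}$. The hypothesis $\overline{Z(B)}\nsupseteq\mathbb{T}$ is essential in two places: it provides the free arc across which the extended $B$ is locally biholomorphic, supplying the Koebe lower bound on $|\sigma'(z_0)|$, and it keeps the interpolation constants of the fibre uniform as $z_0$ approaches the arc, so that $\|h_{i_0}\|_\infty$ can be controlled independently of the sequence index.
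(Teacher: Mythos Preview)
Your strategy is inverted relative to the paper's, and the inversion creates a genuine gap. The paper proves the adjoint formula \emph{first}, by showing that $\widetilde{M_B}$ on $L^2(\mathbb{D},dA)$ is the minimal normal extension of $M_B$; this is where $\overline{Z(B)}\nsupseteq\mathbb{T}$ enters, via Runge's theorem (one needs $\mathbb{C}\setminus\overline{B^{-1}\circ B(U)}$ connected to approximate $\chi_{\rho_i(U)}$ by polynomials). The unitary $S$ then lifts to a unitary $\widetilde{S}$ on $L^2(\mathbb{D})$ with the same local representation, and a direct change-of-variables on $C_c(E)$ gives $(\widetilde{S})^*f=\sum_{[\rho]}\overline{c_\rho}\,\mathcal{E}_{[\rho^-]}f$. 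Comparing with the Theorem~\ref{33} representation of $S^*$ yields the adjoint formula; bi-finiteness is then an immediate corollary of Lemma~\ref{35} applied to both $S$ and $S^*$.

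Your direct attack on bi-finiteness has two concrete problems. First, the Koebe lower bound is false: for fixed $z_0\in E$ the identity $B'(\sigma(z_0))\sigma'(z_0)=B'(z_0)$ together with the thin-Blaschke estimate in the proof of Corollary~\ref{65} gives $|\sigma'(z_0)|\asymp 1-|\sigma(z_0)|^2$, and since the fibre $\{\rho_k(z_0)\}$ is a Blaschke sequence, $|\sigma'(z_0)|\to 0$ along any infinite family of local inverses. The free arc lets $B$ reflect across $I$, but a local inverse $\sigma$ is $B|_{V_i}^{-1}\circ B|_{V_0}$, and $V_i$ may sit near the zeros of $B$ far from $I$; there is no reason for $\sigma$ itself to extend past $I$, and no uniform $\eta>0$ exists. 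Second, even granting your Bessel inequality, it would only yield $\sum_{\sigma\in[\rho]}(1-|\sigma(z_0)|^2)^2<\infty$, which does not bound $\sharp[\rho]$. Note also that $\sharp[\rho]<\infty$ is already contained in Lemma~\ref{35} (since $\{c_k\}\in\ell^2$ and the $c_k$ are constant on each class), so your argument is aimed at the wrong half; the nontrivial half is $\sharp[\rho^-]<\infty$, and for that you invoke the representation of $S^*$---which is precisely the adjoint formula you have not yet proved.
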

\noindent  In Section 7,    we will show that there does exist a thin Blaschke product $B$
  for which  the condition $\overline{Z(B)}\nsupseteq \mathbb{T}$ fails, see Example 7.1.

Applying Lemma \ref{64} and the methods in \cite[Section 2]{GH2}, we have
the following. For the case of finite Blaschke products, see
\cite[Theorem 7.6]{DSZ}.
  \begin{cor} If  $B$ is a thin Blaschke product with   $\overline{Z(B)}\nsupseteq \mathbb{T}$, \label{65}
  then the von Neumann algebra $\mathcal{V^*}(B)$ is generated by all $\mathcal{E}_{[\rho]}$, where $G[\rho]$ have finite multiplicity.
 \end{cor}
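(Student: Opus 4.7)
The plan is to reduce the statement to a claim about unitary operators in $\mathcal{V}^*(B)$, then to invoke the representation theory built up in Sections~3 and~6 to expand each such unitary in the $\mathcal{E}_{[\rho]}$. Concretely, let $\mathcal{A}$ denote the von Neumann algebra generated inside $B(L^2_a(\mathbb{D}))$ by $\{\mathcal{E}_{[\rho]} : \sharp[\rho]<\infty\}$. By Lemma~\ref{63}, once we observe that for the $[\rho]$ actually arising in the expansion Lemma~\ref{64} forces $\sharp[\rho^-]<\infty$ as well, each $\mathcal{E}_{[\rho]}$ is a bounded operator lying in $\mathcal{V}^*(B)$, so $\mathcal{A}\subseteq\mathcal{V}^*(B)$ is immediate. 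The content is the reverse inclusion.

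Because a von Neumann algebra is the strong closure of the linear span of its unitary group (spectral theorem plus Cayley transform), it suffices to show that every unitary $S\in\mathcal{V}^*(B)$ belongs to $\mathcal{A}$. For such $S$, Theorem~\ref{33} supplies the coefficient sequence $\{c_k\}\in\ell^2$ and Lemma~\ref{35} then lets us regroup the series by equivalence classes of local inverses, yielding
$$
Sh(z)=\sum_{[\rho]}c_\rho\,\mathcal{E}_{[\rho]}h(z),\qquad h\in L^2_a(\mathbb{D}),\ z\in E,
$$
with only countably many nonzero $c_\rho$. The hypothesis $\overline{Z(B)}\nsupseteq\mathbb{T}$ now triggers Lemma~\ref{64}, which guarantees that every $[\rho]$ with $c_\rho\neq 0$ satisfies $\sharp[\rho]<\infty$ \emph{and} $\sharp[\rho^-]<\infty$. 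Hence each term $c_\rho\mathcal{E}_{[\rho]}$ is a bona fide bounded operator in $\mathcal{A}$, and the partial sums $S_N=\sum_{k\le N}c_{\rho_k}\mathcal{E}_{[\rho_k]}$ (under any enumeration of the surviving classes) lie in $\mathcal{A}$.

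The main obstacle is promoting the pointwise series representation to convergence in an operator topology in which $\mathcal{A}$ is closed. I would argue that $S_N\to S$ in the strong operator topology. For a fixed $h\in L^2_a(\mathbb{D})$, the sequence $S_Nh$ converges to $Sh$ uniformly on every compact subset of $E$ by Theorem~\ref{33}, so the pointwise limit is correct; to upgrade to $L^2$-convergence one needs a uniform bound on $\|S_Nh\|$. Such a bound should come from an orthogonality-type estimate modeled on Lemma~\ref{63}: the decomposition of $E$ along the sheets of the distinct covering maps $\pi_\rho$ makes the images $\{\mathcal{E}_{[\rho]}h\}_{[\rho]}$ behave like an approximately orthogonal family, and combined with the unitarity of the infinite matrix $W$ in Theorem~\ref{33} (whose rows have $\ell^2$-norm $1$) this should yield $\|S_Nh\|\le \|h\|$ uniformly in $N$. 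Dominated convergence on a shrinking complement $\mathbb{D}\setminus K_N$ then upgrades the local uniform convergence to convergence in $L^2_a(\mathbb{D})$.

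Once SOT-convergence is in hand, $S=\lim_N S_N\in\mathcal{A}$. Taking the von Neumann algebra generated by all such unitary $S$ recovers $\mathcal{V}^*(B)\subseteq\mathcal{A}$, establishing equality. The only delicate point is the uniform norm control on $S_N$ alluded to above; everything else is routine organization of Theorem~\ref{33}, Lemmas~\ref{35}, \ref{63}, \ref{64}.
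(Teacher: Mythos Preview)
Your overall architecture is reasonable, but the proof has a genuine gap at exactly the point you flag: the uniform control $\|S_N h\|\le\|h\|$. The heuristic you offer does not hold up. The operators $\mathcal{E}_{[\rho]}$ are not contractions (Lemma~\ref{63} only gives $\|\mathcal{E}_{[\rho]}\|^2\le C\cdot\sharp[\rho^-]$), and they are not ``approximately orthogonal'' in any sense that would make $\|\sum c_\rho\mathcal{E}_{[\rho]}h\|^2$ comparable to $\sum|c_\rho|^2\|\mathcal{E}_{[\rho]}h\|^2$. The fact that the rows of $W$ have $\ell^2$-norm $1$ only says $\sum_k|c_k|^2=1$; it gives no control on the operator norms of partial sums. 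Without this bound, your dominated-convergence step for upgrading local uniform convergence to $L^2$-convergence collapses, and SOT-convergence $S_N\to S$ is not established.

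The paper avoids this difficulty by a different route. Rather than proving $S\in\mathcal{A}$ directly, it invokes the bicommutant theorem: to show $\mathcal{A}=\mathcal{V}^*(B)$ it suffices to show $\mathcal{A}'=\mathcal{V}^*(B)'$, i.e., any $A$ commuting with every $\mathcal{E}_{[\rho]}$ commutes with every $S\in\mathcal{V}^*(B)$. For this one only needs the much weaker convergence $S_mK_\lambda\to SK_\lambda$ and $S_m^*K_\lambda\to S^*K_\lambda$ in norm for each fixed $\lambda\in E$. This is proved by computing $\mathcal{E}_{[\rho]}K_\lambda$ explicitly as a finite combination of kernels $K_{\sigma(\lambda)}$, then using that $\{\rho_k(\lambda)\}$ is an interpolating sequence for $L^2_a(\mathbb{D})$ together with the estimate $|\rho_k'(\lambda)|\asymp 1-|\rho_k(\lambda)|^2$ to see that $\sum_k d_k'\overline{\rho_k'(\lambda)}K_{\rho_k(\lambda)}$ converges in norm. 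One then passes to the limit in $\langle AS_mK_z,K_z\rangle=\langle AK_z,S_m^*K_z\rangle$ and finishes with injectivity of the Berezin transform. The key idea you are missing is that convergence on reproducing kernels is both sufficient (via bicommutant and Berezin) and tractable (via interpolation), whereas full SOT-convergence is neither needed nor easy.
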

 \begin{proof} Suppose that
 $B$ is a thin   Blaschke product and $\overline{Z(B)}\nsupseteq \mathbb{T}$. By Lemma \ref{63},
 we have $$span\, \{ \mathcal{E}_{[\rho]}:\sharp[\rho]<\infty \} \subseteq \mathcal{V}^*(B).$$
  By von Neumann bicommutant theorem, the proof of Corollary \ref{65} is reduced to show that the
commutant of $span\, \{ \mathcal{E}_{[\rho]}:\sharp[\rho]<\infty \} $ equals that of
$\mathcal{V}^*(B)$. For this, for a given operator $A$ commuting
with all $\mathcal{E}_{[\rho]}$, we must show that
 $A$ commutes with each operator  in $\mathcal{V}^*(B)$.

For this, we first present a claim. For any $S  \in \mathcal{V}^*(B )  $,
write
\begin{equation}S=\sum c_\rho \mathcal{E}_{[\rho]} .  \label{thin}\end{equation}
By Lemma \ref{64}, for each $c_\rho\neq 0,$ we have $\sharp [\rho]<\infty  $ and $\sharp [\rho^-]<\infty$
Let $S_m$  denote the corresponding partial sum of  the expanding series (\ref{thin}) for $S$.
 We claim that for all $\lambda \in E$,
\begin{equation}\lim_{ m\to \infty}\|S_m K_\lambda\to S K_\lambda \|=0 . \label{534}\end{equation}

If we have shown this claim, then we can give the proof of Corollary \ref{65} as follows.
For any $S \in\mathcal{V}^*(B)$,   define
 $S_m$ as  above. Notice that $A$ commutes with $S_m$, and hence
 \begin{equation}\langle A S_m K_z,K_z\rangle=\langle AK_z, S_m^*
  K_z\rangle,\,z\in E. \label{eq}\end{equation}
By the claim and Lemma \ref{64}, we have
$$\lim_{m  \to \infty} \|S_m^*K_z-S^*K_z\|=\lim_{m\to \infty} \|S_mK_z-SK_z\|=0, \,  z\in E. $$
Take limits in (\ref{eq}) and we get
 $$\langle AS K_z,K_z\rangle=\langle AK_z, S^* K_z\rangle=\langle SAK_z,  K_z\rangle.$$
By the continuity of both sides in variable $z$, the above holds for every $z\in \mathbb{D}$.
The property of Berezin transformation yields that $AS=SA$, as desired.

Now it remains to show (\ref{534}). The reasoning is as follows.
For each $\rho$ satisfying  $\sharp[\rho]<\infty$  and  $\sharp [\rho^-]<\infty$.
  Lemma  \ref{63} shows that $\mathcal{E}_{[\rho]}^*=\mathcal{E}_{[\rho^-]}$,
Then by $$\langle \mathcal{E}_{[\rho]} K_\lambda , f \rangle =\langle K_\lambda, \mathcal{E}_{[\rho]}^* f \rangle, f\in L_a^2(\mathbb{D}),$$
we get
\begin{equation} \mathcal{E}_{[\rho]} K_\lambda =\sum_{\sigma \in [\rho^-]}\overline{ \sigma (\lambda)'}K_{\sigma (\lambda)}.\label{535}\end{equation}
Now   rewrite $$S=
\sum d_\rho'  \mathcal{E}_{[\rho^-]} \equiv  \sum d_\rho'  \mathcal{E}_{[\rho]}^*.$$
On a neighborhood $V$ of $\lambda ,$ the local inverse of $B$ can be written as $\{\rho_k\}$.
In this case, we have $$ Sh(z)=\sum_{k=0}^\infty d_k' h\circ \rho_k (z) \rho_k' (z), z\in V,$$
with $d_k'=d_{\rho_k}'.$
This, combined with (\ref{535}), yields that
$$S K_\lambda (z) =\sum_{k=0}^\infty d_k' \overline{\rho_k' (z)} K_{\rho_k(\lambda)}(z) , z\in V.$$
Below, we will show that $\sum_{k=0}^\infty d_k'  \overline{\rho_k' (z)} K_{\rho_\lambda}$ converges in norm,
which immediately shows that $\lim S_m K_\lambda=SK_\lambda$, as desired.

 In fact,   on $V$ we have
  $B\circ \rho_k  =B$ for each $k=0,1,\cdots$,  which gives that
\begin{equation}|B'(\rho_k(\lambda)) | |\rho_k'(\lambda)| =|B'(\lambda)|.\label{536}\end{equation}
Write $ a=B(\lambda)$,   by Proposition \ref{24} $\varphi_a(B)$ is a thin Blaschke product whose zero sequence is $\{\rho_k(\lambda)\}$.
Then there is a numerical constant $\delta>0$ such that $$\prod_{j,j\neq k}d(\rho_j(\lambda),\rho_k(\lambda) )\geq \delta, $$
and then by computation we get
$$|(\varphi_a\circ B)'(\rho_k(\lambda)) |=  \frac{1}{1-|\rho_k(\lambda)| ^2 }\prod_{j,j\neq k}d(\rho_j(\lambda),\rho_k(\lambda) )\geq
 \delta\frac{1}{1-|\rho_k(\lambda)| ^2 }.$$
Since $ B (\rho_k(\lambda))=B ( \lambda )=a$, the above immediately shows that
 $$\frac{1}{1-|\rho_k(\lambda)| ^2 }\geq  |\varphi_a'(a)  | |B '(\rho_k(\lambda))| \geq
 \delta\frac{1}{1-|\rho_k(\lambda)| ^2 },$$
 and hence    $$ \frac{1}{|\varphi_a'(a)|}\frac{1}{1-|\rho_k(\lambda)| ^2 }\geq  |B '(\rho_k(\lambda))| \geq
  \frac{\delta}{|\varphi_a'(a)|}\frac{1}{1-|\rho_k(\lambda)| ^2 }.$$
  Then by (\ref{536}), there is a positive constant $C$ such that
  \begin{equation} \frac{1}{C}(1-|\rho_k(\lambda)|^2 )\leq  |\rho_k'(\lambda)|\leq  C(1-|\rho_k(\lambda)|^2).\label{537}\end{equation}
On the other hand,   the uniformly separated sequence  $\{\rho_k(\lambda)\}$ is interpolating
for $L_a^2(\mathbb{D})$; that is,  the equations $f(z_k)=w_k$ for $k=1,2,\cdots$ has a solution
$f$ in $L_a^2(\mathbb{D})$ whenever
$$\sum_{k=1}^\infty (1-|z_k|^2)^2|w_k|^2 <\infty.$$ Then applying the proof of
\cite[Lemma A]{Cow2} shows that
$$A:  h \to \{ h (\rho_k(\lambda))(1-|\rho_k(\lambda)|^2)\}   $$
is a bounded invertible operator from $ (\varphi_a(B)L_a^2(\mathbb{D}))^\perp$ onto $l^2$.
Denote by $\{e_k\}_{k=0}^\infty$ the standard orthogonal basis of $l^2$, and
 then $$A^*e_{k}=(1-|\rho_k(\lambda)|^2) K_{\rho_k(\lambda)},k=0,1,\cdots.$$
 Then by (\ref{537}),
$$\widetilde{A} e_{k}= \rho_k'(\lambda)K_{\rho_k(\lambda)}$$
naturally induces a (linear) bounded operator $\widetilde{A}$. This immediately shows that
$$\sum_{k=0}^\infty d_k'  \overline{\rho_k' (z)} K_{\rho_k(\lambda)}$$ converges in norm, and the sum does not depend on the order of $k$.
Since $\{S_m K_\lambda\}$ is a subsequence of the partial sum of this series,
$\{S_m K_\lambda\}$ converges to $S  K_{\rho_\lambda}$ in norm.
 \end{proof}
 \begin{rem} In fact, with more efforts one can show that in Theorem \ref{62} and Corollary \ref{65},
 the condition that $\overline{Z(B)}\nsupseteq \mathbb{T}$ can be dropped.
 \end{rem}
Now with Lemma \ref{64}, we can give the proof of Lemma \ref{62}.

\noindent \textbf{Proof of Theorem \ref{62}.} Suppose that $B $ is  a
thin Blaschke product and $\overline{Z(B)}\nsupseteq \mathbb{T}$.
By Lemma \ref{63}, it suffices to show that $\mathcal{V}^*(B)$ is nontrivial if and only if
  there is a nontrivial local inverse  $\rho$ such that   $\sharp [\rho]<\infty $ and $\sharp [\rho^-]<\infty$.

  If $\mathcal{V}^*(B)$ is nontrivial, then there is an operator $S\in \mathcal{V}^*(B)$ different from a constant multiple  of $I$.
  By Theorem \ref{32}, we may assume that $S$ has the following form: $$
 Sh(z)=\sum_{k=0}^\infty c_k  h\circ \rho_k(z) \rho_k'(z),
 \ h\in L^2_a( \mathbb{D}),\, z\in \mathbb{D}-L . 
$$
Then  there is a nontrivial local inverse $\rho_k$ such that
$c_k\neq 0$. By Lemma \ref{64}, both $\sharp [\rho_k]$ and  $\sharp
[\rho_k^-]$  are finite, as desired.

The inverse direction follows directly from Lemma 6.3.  Therefore, the proof is
complete. $\hfill \square$
\vskip2mm
For  completeness,  we must prove Lemma \ref{64}. For this, we need some
preparations. \vskip1.5mm \noindent\textbf{Claim:} If
$\overline{Z(B)}\nsupseteq \mathbb{T}$, then  for any $0<\delta <1,$
$\overline{B^{-1}(\Delta(0,\delta))}\nsupseteq \mathbb{T}.$

\vskip1.5mm The reasoning is as follows. By (\ref{2.2}), there is
an $\varepsilon\in(0,1)$ such that $$
 B^{-1}(\Delta(0,\delta))\subseteq \bigcup_n \Delta(z_n,\varepsilon),
$$  where $Z(B)=\{z_n\}.$
 Therefore, it suffices to show that $\overline{\bigcup_n \Delta(z_n,\varepsilon)} \nsupseteq \mathbb{T}.$
For this, assume conversely that $\overline{\bigcup_n
\Delta(z_n,\varepsilon)}  \supseteq  \mathbb{T}$. Since
$\overline{Z(B)}\nsupseteq \mathbb{T}$,
 there is a $\zeta\in \mathbb{T} $ such that $ \zeta \not\in \overline{Z(B)}.$
Since $\overline{\bigcup_n \Delta(z_n,\varepsilon)}  \supseteq
\mathbb{T}$, there is a sequence $\{w_k\}$ in $\mathbb{D}$ and a
subsequence $\{z_{n_k}\}$ of $\{z_n\}$ such that
 \begin{equation}d( z_{n_k},w_k)<\varepsilon \label{6.4}\end{equation}
and $$\lim_{k\to \infty}w_k =\zeta.$$
 Since $\zeta \not\in
\overline{Z(B)}   $, we have $$d(z ,\zeta)=1, z\in \overline{Z(B)}
.$$ By the continuity of $d(z,w)$, there is an enough small closed
disk $ \overline{D}  $ centered at $\zeta$ such that
$$d(z ,w)\geq  \frac{1+\varepsilon}{2}, z\in \overline{Z(B)} , w\in Vw_k.$$
For enough large $k$, $w_k \in w_k$ and then $$d(z_{n_k},w_k
)\geq \frac{1+\varepsilon}{2},$$ which is a contradiction to
(\ref{6.4}). Therefore, $\overline{B^{-1}(\Delta(0,\delta))}\nsupseteq \mathbb{T}.$ The
proof of the claim is complete. $\hfill \square$

\vskip2mm By the claim, for each $z\in E$  there is always an open
neighborhood  $U$ ($U\subseteq E$) of $z$ that admits a complete
local inverse, such that
  $$\overline{B^{-1}\circ B(U)}  \nsupseteq  \mathbb{T}.$$ Then we may stretching   $U$   such
 that $ \mathbb{C}-\overline{B^{-1}\circ B(U)}  $  is connected. Therefore,

\emph{for each $z\in E$, there is an open neighborhood  $U$(
$U\subseteq E$)  of $z$ that admits a complete local inverse, and}
$ \mathbb{C}-\overline{B^{-1}\circ B(U)}  $  \emph{is connected.} %

Now we are ready to give the proof of Lemma \ref{64}.

\noindent\textbf{Proof of Lemma \ref{64}.} Since each operator in a von
Neumann algebra is the finite linear span of its unitary
operators\cite{Con}, we may assume that $S$ is a unitary operator in
$\mathcal{V^*}(B)$. Let $\widetilde{M_B}$ denote the multiplication
operator by $B$ on $L^2(\mathbb{D}, dA)$ given by
$$\widetilde{M_B}g=Bg, g\in L^2(\mathbb{D}, dA).$$
If we can show that $\widetilde{M_B}$ is the minimal normal
extension of $M_B$,
 then by  \cite[pp. 435, 436, Theorem VIII.2.20]{Con} and \cite[pp.552, 553]{DSZ}, for each unitary operator
$S\in V^*(B)$, there exists a unitary operator $\widetilde{S}$
defined on
 $L^2(\mathbb{D}, dA)$ such that $\widetilde{S}|_{L_a^2(\mathbb{D})}=S$ and $\widetilde{S}$ commutes with $\widetilde{M_B}$.
To see this,   set
$$K=span\{\widetilde{M_B}^{*k}h; \ h\in L_a^2(\mathbb{D}),k\geq 0\}.$$
Define a linear map $\widetilde{S}$  on $K$ such that
$\widetilde{S}(\widetilde{M_B}^{*k}h)= \widetilde{M_B}^{*k}Sh$.
Since $S$  is a unitary operator, it is easy to check that
$\widetilde{S}$ is well defined on $K$ and that $\widetilde{S}$ commutes
with $\widetilde{M_B}$. Moreover,
 $\widetilde{S}$ is an isometric operator whose range equals $K$. Thus, $\widetilde{S}$
can be immediately extended to a unitary operator on the closure
$\overline{K}$ of $K$. Since
$$K=span\{\widetilde{M_B}^{*k}\widetilde{M_B}^l h; \ h\in
L_a^2(\mathbb{D}),k\geq 0,l\geq 0\},$$ it follows that for any
polynomial $P$, $P(B,\overline{B})h \in  K$. Then applying
Weierstrass's Theorem shows that for any $u\in
C(\overline{\mathbb{D}})$, $u(B)h\in \overline{K}$. Lusin's Theorem
\cite[ p.242]{Hal} states that for each $v\in  L^\infty( \mathbb{D})
$, there is a uniformly bounded sequence $\{v_n\}$ in $
C(\overline{\mathbb{D}})$ such that $\{v_n\}$ converges
to $v$ in measure. Then it is easy to see that for any $u\in
L^\infty({\mathbb{D}})$, $u(B)h\in \overline{K}$. In particular, for
any open subset $U$ of $\mathbb{D}$,
$$\chi_{B^{-1}\circ B(U)}h\in  \overline{K}.$$ As mentioned above,
 for each $z\in E$  there is an open neighborhood  $U$ $(U\subseteq E$)  of $z$ that admits a complete local inverse, such that
  $$B^{-1}\circ B(U)=\bigsqcup_j \rho_j(U),$$ and
$ \mathbb{C}-\overline{B^{-1}\circ B(U)}  $   is connected. For each
fixed $i$, $\chi_{\rho_{i}(U)} $ denotes the characterization
function of $\rho_{i}(U)$.
As done in \cite{DSZ}, applying  the Runge theorem \cite{Con, Hor} shows that there is a sequence of polynomials $\{p_k\}$ such that 
$p_k$ converges  to  $\chi_{\rho_{i}(U)}  $ uniformly on the closure
of $B^{-1}\circ B(U)$. Therefore, since $\chi_{B^{-1}\circ
B(U)}p_k\in \overline{K}$, $\chi_{\rho_i(U)}\in \overline{K}.$ In
particular, $\chi_{U}\in \overline{K}.$
 By the same reasoning, for any   disk $ \Delta $ such that  $\Delta \subseteq U $, we   also
have $\chi_{ \Delta }\in \overline{K}$. Since all $\chi_{ \Delta }$
span a dense subspace of $L^2(U)$, then it follows that
$L^2(U)\subseteq \overline{K}$. Since all such open sets $U$ covers
$\mathbb{D}$, we have $L^2( \mathbb{D})\subseteq \overline{K}$, and
hence $L^2( \mathbb{D})= \overline{K}$. This shows that
$\widetilde{M_B}$ is exactly the minimal normal  extension  of
$M_B$, as desired.

Now we assume that the unitary operator $S\in \mathcal{V}^*(B)$ has
the following form:
 $$ Sh(z)=\sum_{ [\rho]}\sum_{\sigma \in [\rho]} c_\rho  h\circ \sigma(z) \sigma'(z),
 \ h\in L^2_a( \mathbb{D}),\, z\in E .$$
Let $U$ be  an open set that admits a complete local inverse, and $$B^{-1}\circ B(U)=\bigsqcup_j
\rho_j(U).$$ Since $\widetilde{S}$ commutes with $\widetilde{M_B}$,
by the theory of spectral decomposition, $\widetilde{S}$ also
commutes with  $M_F$, where $F=B^{-1}\circ B(\Delta)$ for any
measurable subsets $\Delta$ of $\mathbb{D}$\cite[pp.553-557]{DSZ}.
In particular, letting $\Delta=U$, we have
\begin{equation}\widetilde{S}M_{\chi_{B^{-1}\circ B(U)}}=M_{\chi_{B^{-1}\circ B(U)}}\widetilde{S}. \label{6.5}\end{equation}
By the above discussion,  there is a sequence of polynomials $\{p_k\}$ such that $p_k$
converges  to the characterization function $f\triangleq
\chi_{\rho_i(U)} $ uniformly on the closure of $B^{-1}\circ B(U)$.
Now by (\ref{6.5}), we have
\begin{eqnarray*}\widetilde{S} \chi_{B^{-1}\circ B(U)} p_k &=&  M_{\chi_{B^{-1}\circ B(U)}}Sp_k \\
&=& \sum_{ [\rho]}\sum_{\sigma \in [\rho]} \chi_{\sqcup \rho_j(U)}
c_\rho  p_k \circ \sigma(z) \sigma'(z)  .
\end{eqnarray*}
Letting $k\to\infty$ gives that
\begin{equation}\widetilde{S} f
 =  \sum_{ [\rho]}\sum_{\sigma \in [\rho]} c_\rho  f \circ \sigma(z) \sigma'(z)  .\label{6.6}\end{equation}
 The above discussions show that if $f=\chi_{\rho_i(V)}$ for some open subset $V$ of $U$, (\ref{6.6}) also holds.
 Thus, if $f$ is a linear span of finite $\chi_{\rho_i(V)}(V \subseteq U)$, (\ref{6.6}) holds. Therefore the 
identity (\ref{6.6}) can be generalized to the case when $f\in
C_c(B^{-1}\circ B(U)).$
 That is, locally, \emph{$\widetilde{S}$ has the same representation as $S$.}

By  (\ref{6.6}) and the proof of Lemma \ref{63},
$$(\widetilde{S})^* f
 =  \sum_{ [\rho]}\sum_{\sigma \in [\rho^-]} \overline{c_\rho}  f \circ \sigma(z) \sigma'(z) ,\  f\in C_c(B^{-1}\circ B(U)).$$
 Write $$S^*h(z)= \sum_{ [\rho]}\sum_{\sigma \in [\rho^-]} d_\rho  h \circ \sigma(z) \sigma'(z),\ h\in L^2_a( \mathbb{D}),\, z\in E ,$$
 and similarly,  for any  $f\in C_c(B^{-1}\circ B(U)) $ we have
 $$\widetilde{S^*}f=\sum_{ [\rho]}\sum_{\sigma \in [\rho^-]} d_\rho  f \circ \sigma(z) \sigma'(z) .$$

 Since $(\widetilde{S})^*=\widetilde{S^*}$, then by the uniqueness of the coefficients $c_\rho,$ we get $d_\rho =\overline{c_\rho}.$
 Therefore, $$S^*h(z)=\sum_{ [\rho]} \overline{c_\rho}   h \circ \sigma(z) \sigma'(z)\equiv
 \sum_{ [\rho]} \overline{c_\rho}  \mathcal{E}_{[\rho^-]} h(z),\ h\in L^2_a( \mathbb{D}),\, z\in E.$$
 This gives the former part of Lemma \ref{64}. The remaining part follows directly from Lemma 3.5.
 $\hfill \square$
 \begin{rem} By further effort, one can show that (\ref{6.6}) holds for any $f$ in $C_c(\mathbb{D})$, a dense subspace of $L^2(\mathbb{D})$.
 \end{rem}

It is natural to ask  that for which thin Blaschke product $B$,
$M_B$ has a nontrivial reducing subspace. The following provides such an example.
\begin{exam} Suppose that  a Blaschke product $ \phi$ has the form $\phi =B\circ \varphi$, where $B\in H^\infty (\mathbb{D})$ and  $\varphi $ is
a finite Blaschke product with $\deg \varphi \geq 2.$ Then we have
$\mathcal{V}^*(\phi ) \supseteq  \mathcal{V}^*(\varphi )$. Since $
\mathcal{V}^*(\varphi )$ is nontrivial\cite{HSXY}, then
$\mathcal{V}^*(\phi )$ is nontrivial.
 For example,  it is easy to check that  for each thin Blaschke product $B$ and $n\geq 2$,
$B(z^n)$ is always a  thin Blaschke product.

For more examples,  let $\gamma$ be a non-tagential $C^1$-smooth curve  such that $\gamma(1)\in  \mathbb{T}$ and $\gamma'(1)\neq 0$. Pick
a thin Blaschke sequence  $\{z_n\} $ lying on $\gamma$, and
 denoted by $B$ the  Blaschke product for $\{z_n\} $. Then we claim that for any finite Blaschke product $\psi$,
 $B\circ \psi $ is also a thin Blaschke product.

 The reasoning is as follows. It is known that the derivative of a finite
 Blaschke product never vanishes on $\mathbb{T}$.
 Then it follows that there is an $r_0\in [0,1)$ such that $\psi^{-1}(\gamma [r_0,1])$ consists of
 $d$  $(d=\deg \psi)$ disjoint arcs,
 $$\gamma_1,\cdots,\gamma_d.$$ Without loss of generality,
 we may assume that $r_0=0$, and then for each $z_k$, $\psi^{-1}(z_k)$ consists of $d$ different complex number, denoted by
 $$w_k^1,\cdots, w_k^d,$$
 where $w_k^j\in \gamma_j(1\leq j \leq d)$ for each $k.$
 To show that $B\circ \psi$ is a thin Blaschke product, it suffices to show that
 $$\lim_{k\to \infty} |(B\circ \psi)'(w_k^j)|(1-|w_k^j|^2)=1, j=1,\cdots, d.$$
 Since  $\lim\limits_{k\to \infty} | B'(z_k )|(1-|z_k |^2)=1$,
 it is reduced to show that $$\lim_{k\to \infty}  | \psi'(w_k^j)| \frac{1-|w_k^j|^2}{1-|z_k |^2}=1, j=1,\cdots, d.$$

 Now for a fixed $j$, rewrite $w_k$ for $w_k^j$. Write $z^*=\gamma(1) (z^*\in \mathbb{T} )$, and then
 $z_k$ tends to $ z^*$ as  $ k$ tends to  infinity.
 Also, there is some $w^* \in \mathbb{T}$ such that
$w_k$ tends to $ w^*$ as  $ k$ tends to  infinity. Precisely, this $w^*$ is exactly the endpoint of $\gamma_j$.
Since $\psi'(w^*)\neq 0$, $\psi$ is conformal at $w^*$. Then by the
conformal property, one may assume that there is a $\theta_0\in
(-\frac{\pi}{2},\frac{\pi}{2})$ such that
$$z_k=z^*+\varepsilon_k z^* e^{i\theta_0}+o(1) \varepsilon_k,k\to \infty,$$
and
$$w_k=w^*+ \delta_k w^* e^{i\theta_0}+o(1) \delta_k, k\to \infty .$$
Here, both $\varepsilon_k$ and $ \delta_k$ are positive
infinitesimals, and $o(1)$ denotes an infinitesimal. By easy computations, we have \begin{equation}
\frac{1-|z_k|^2}{1-|w_k|^2}\Big/  \frac{\varepsilon_k }{
\delta_k}\to 1, k\to \infty. \label{6.7} \end{equation} Since
$$\frac{\psi(w_k)-\psi(w^*)}{w_k-w^*}=\frac{z_k-z^* }{w_k-w^*}=
\frac{z^*}{w^*}\frac{1- z_k \overline{z^*} }{1-w_k\overline{w^*}},$$ it is easy to
verify  that $$\big|\frac{\psi(w_k)-\psi(w^*)}{w_k-w^*}\big| \Big/
\frac{\varepsilon_k }{ \delta_k}\to 1, k\to \infty.$$ Therefore,
$|\psi'(w^*)| /  \frac{\varepsilon_k }{ \delta_k} \to 1, k\to
\infty.$ Since $\psi'$ is continuous on $\overline{\mathbb{D}}$, we
have $|\psi'(w_k)|\to |\psi'(w^*)|,  k\to \infty,$ and hence
$$|\psi'(w_k)| \big/  \frac{\varepsilon_k }{ \delta_k} \to 1, k\to
\infty.$$ Then by (\ref{6.7}),
 $$\lim_{k\to \infty}   |\psi'(w_k )| \frac{1-|w_k |^2}{1-|z_k |^2}=1.$$
 The proof is complete. \label{68}
\end{exam}
For a Blaschke product $B$, if  there is a $\rho\in
\mathrm{Aut}(\mathbb{D})(\rho\neq id)$ satisfying $B\circ \rho =B$,
then \label{67} $$U_\rho:f\to f\circ \rho \, \rho', f\in L^2_a(\mathbb{D})$$ defines a unitary operator in $\mathcal{V^*}(B)$, and thus $\mathcal{V^*}(B)$
is nontrivial. However, under the setting of thin Blaschke products,
such a $B$ is strongly restricted. This is characterized by the
following, which is of interest itself.
\begin{prop} Given  a thin Blaschke product $B$, if  there is a function $\rho$ in $\mathrm{Aut}(\mathbb{D})(\rho\neq id)$ satisfying $B\circ \rho =B$,
then there exists a  thin Blaschke product $B_1$,   an integer
$n(n\geq 2)$ and a point  $\lambda\in \mathbb{D}$ such that \label{69}
$$B= B_1(\varphi_\lambda^n).$$   \end{prop}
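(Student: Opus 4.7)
The overall plan is to show that $\rho$ must fix a point $\lambda \in \mathbb{D}$, conjugate to send this fixed point to the origin, read off a rotational symmetry of the resulting thin Blaschke product, and descend through that symmetry. First, I would rule out the possibility that $\rho$ is parabolic or hyperbolic. If $\rho$ had no fixed point in $\mathbb{D}$, then picking any zero $z_0$ of $B$, the orbit $\{\rho^k(z_0)\}$ would be an infinite set of zeros of $B$ (since $B \circ \rho = B$) with consecutive pseudohyperbolic distances all equal to the fixed number $d(z_0, \rho(z_0)) < 1$ (because $\rho$ is a pseudohyperbolic isometry). This would contradict the thinness condition $\lim_{k \to \infty} \inf_{j \neq k} d(z_j, z_k) = 1$. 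Hence $\rho$ must be elliptic, with a unique fixed point $\lambda \in \mathbb{D}$.

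Next, I would pass to $\tilde B := B \circ \varphi_\lambda$, which is still a thin Blaschke product because $\varphi_\lambda$ is a pseudohyperbolic isometry. Since $\varphi_\lambda \circ \rho \circ \varphi_\lambda$ is an automorphism of $\mathbb{D}$ fixing $0$, it equals $z \mapsto \zeta z$ for some $\zeta \in \mathbb{T}$ with $\zeta \neq 1$; unwinding $B \circ \rho = B$ then yields $\tilde B(\zeta z) = \tilde B(z)$. Let $\Sigma := \{\eta \in \mathbb{T} : \tilde B(\eta z) \equiv \tilde B(z)\}$, a closed subgroup of $\mathbb{T}$. If $\Sigma = \mathbb{T}$, then $\tilde B$ would be rotation-invariant, hence constant, contradicting nontriviality. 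Therefore $\Sigma$ is a proper closed subgroup of $\mathbb{T}$, hence finite cyclic of some order $n \geq 2$, generated by $\zeta_0 := e^{2\pi i / n}$.

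The invariance $\tilde B(\zeta_0 z) = \tilde B(z)$ forces $\tilde B$ to factor as $\tilde B(z) = B_1(z^n)$ for some holomorphic $B_1 : \mathbb{D} \to \overline{\mathbb{D}}$. Since thin Blaschke products have only simple zeros while any zero of $B_1(z^n)$ at $0$ would have order a multiple of $n$, one forces $\tilde B(0) \neq 0$, so every zero-orbit of $\tilde B$ under rotation by $\zeta_0$ has size exactly $n$. Grouping the zeros $\{w_k\}$ of $\tilde B$ into orbits $\{\zeta_0^j v_m\}_{j=0}^{n-1}$ and applying the identities $\prod_{j=0}^{n-1}(\zeta_0^j v - z) = (-1)^{n-1}(v^n - z^n)$ and $\prod_{j=0}^{n-1}(1 - \overline{\zeta_0^j v}\, z) = 1 - \overline{v}^{\,n} z^n$ exhibits $B_1$ as the Blaschke product with zero sequence $\{v_m^n\}$. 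Thinness of $B_1$ then follows from $\tilde B'(z) = n z^{n-1} B_1'(z^n)$ together with the asymptotic $1 - |v_m|^{2n} \sim n(1 - |v_m|^2)$ as $|v_m| \to 1$, which transfers $(1 - |v_m|^2)|\tilde B'(v_m)| \to 1$ to $(1 - |v_m^n|^2)|B_1'(v_m^n)| \to 1$. Finally, since $\varphi_\lambda$ is an involution, $B = \tilde B \circ \varphi_\lambda = B_1(\varphi_\lambda^n)$, as required.

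The main obstacle I anticipate is the initial classification of $\rho$ as elliptic: ruling out parabolic or hyperbolic $\rho$ uses the thinness condition in an essential way, beyond the mere Blaschke summability of the zeros. A secondary technical point is verifying thinness of the descended product $B_1$, which again relies on the fine derivative asymptotic $(1 - |z|^2)|B'(z)| \to 1$ rather than on any coarser estimate. Once these two ingredients are secured, the rotational descent $\tilde B(z) = B_1(z^n)$ is essentially formal.
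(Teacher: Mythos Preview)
Your approach is essentially the same as the paper's: use thinness to force $\rho$ to have finite order, conjugate the fixed point to the origin to obtain a rotational symmetry, and descend through $z\mapsto z^n$. The paper proves finiteness of $\langle\rho\rangle$ in one stroke (an infinite orbit $\{\rho^j(z_0)\}\subseteq Z(B)$ would have $\prod_{k\neq j}d(\rho^j z_0,\rho^k z_0)$ independent of $j$ by isometry, contradicting thinness), whereas you split this into ruling out non-elliptic $\rho$ via consecutive orbit distances and then excluding irrational rotations via the closed-subgroup argument for $\Sigma$; the content is the same. The paper cites \cite{DSZ} for the form of a finite-order disk automorphism, while you derive it directly from the fixed point. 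Your derivative computation transferring thinness from $\tilde B$ to $B_1$ is more explicit than the paper's one-line assertion.

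One inaccuracy: thin Blaschke products need \emph{not} have only simple zeros---the condition $\lim_{k\to\infty}\prod_{j\neq k}d(z_j,z_k)=1$ allows finitely many repeated zeros. Hence your deduction that $\tilde B(0)\neq 0$ is unjustified. This does not damage the argument: the factorization $\tilde B(z)=B_1(z^n)$ holds regardless, with $B_1$ possibly vanishing at $0$, and your derivative verification of thinness applies to every nonzero zero $v_m$, which suffices for the limit. The paper handles this by first replacing $B$ with $\varphi_a\circ B$ for a generic $a$ so that all zeros become simple.
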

\noindent  If $B$ is a finite Blaschke product, a similar version of
the above also holds, see \cite[Lemma 8.1]{DSZ}. Proposition
\ref{69} is sharp in the sense that it fails for interpolating
Blaschke products, especially for those holomorphic covering maps
$B:\mathbb{D}\to \mathbb{D}-F$, with $F$ a discrete subset of $
\mathbb{D}$. For details, refer to \cite[Examples 3.5 and 3.6]{GH2}.
\begin{proof} Assume that $B$ is a thin Blaschke product, and there is a $\rho$ in $\mathrm{Aut}(\mathbb{D})(\rho\neq id)$
satisfying $B\circ \rho =B$. Without loss of generality, we may assume that $B$ has only simple zeros; otherwise we shall replace
$B$ with $\varphi_\lambda\circ B $, and by Proposition \ref{24}(2)
$\varphi_\lambda\circ B$ is also a thin Blaschke product.

In the case of $B$ having only simple zeros, we first will
show that \emph{the group $G$ generated by $\rho$ is  finite.} To
see this, assume conversely that $G$ is not finite. 
Under this assumption, we first show that there is a
$z_0\in Z(B)$ such that for all   $i,j\in \mathbb{Z}$ with $i\neq
j$,
\begin{equation}\rho^i(z_0)\neq \rho^j(z_0). \label{6.8}\end{equation}
If (\ref{6.8}) were not true, then
 there would be a   positive integer $n_1$ satisfying
$$\rho^{n_1}(z_0)=z_0.$$
Now take some $z_1\in Z(B)$ such that $ z_1\neq z_0 $. If
$\{\rho^j(z_1)\}$ are pairwise different, then we are done;
otherwise, there would be a positive integer $n_2$ satisfying
$$\rho^{n_2}(z_1)=z_1.$$
Clearly,  $\rho^{n_1n_2}$  is a member in $\mathrm{Aut}(\mathbb{D})$
satisfying
$$\rho^{n_1n_2}(z_0)=z_0 \quad \mathrm{and} \quad \rho^{n_1n_2}(z_1)=z_1  .$$
That is, $\rho^{n_1n_2}$ has  two different fixed points  $z_0$ and
$z_1$ in $\mathbb{D}$. However, by \cite[Theorem 1.12]{Mi} each
member in $\mathrm{Aut}(\mathbb{D})$ admitting at least $2$
different fixed points must be the identity map, and hence
$\rho^{n_1n_2}=id$. This is a contradiction. Therefore, there exists
some $z_0\in Z(B)$ such that $\{\rho^j(z_0)\}$ are pairwise
different.

Thus, if $G$ were not infinite,
  then  $\{\rho^j(z_0)\}_{j\in \mathbb{Z}}$ were a
subsequence in $Z(B)$, and hence a thin Blaschke sequence. Thus we
would have $$\lim_{j\to  \infty} \prod_{k\in \mathbb{Z},k\neq
j}d(\rho^j(z_0),\rho^k(z_0))=1.$$ However, since the hyperbolic
metric $d$ is invariant under Mobius maps,
$$\prod_{k\in \mathbb{Z},k\neq j}d(\rho^j(z_0),\rho^k(z_0)) \equiv \prod_{k\in \mathbb{Z}-\{0\}}d( z_0 ,\rho^k(z_0))<1,\quad j\in \mathbb{Z} .$$
This is a contradiction. Therefore,   $G$   is  finite.

Let $n$ denote the order of $G$, and then   $\rho^n=id,$ and $$G=\{\rho^j:1\leq j\leq n \}.$$
By the proof of  \cite[Lemma 8.1]{DSZ}, there is a $\lambda \in \mathbb{D}$ and an $n$-th root $\xi$ of unity such that
  $$\rho(z)= \varphi_\lambda (\xi  \varphi_\lambda(z)).$$
   Write    $  \rho^* \triangleq \varphi_\lambda \circ \rho \circ  \varphi_\lambda$,  and then  $ \rho^*(z)=\xi z$.
 Notice that $B\circ \varphi_\lambda$ is also a thin Blaschke product satisfying
 $$(B\circ \varphi_\lambda)\circ \rho^*= B\circ \varphi_\lambda,$$ and $B=(B\circ \varphi_\lambda)\circ \varphi_\lambda$. To
 finish the proof, it suffices to discuss the case that $\rho$ is a rational rotation, i.e.
 $\rho (z)=\xi z$. In this case, we will show that $ B(z)=B_1(z^n)$ for some   thin Blaschke product $B_1$.

 For this, observe that
 $$B(\rho^j)=B,\,1\leq j <n,$$ which implies that for any $z_0  \in Z(B)$, $z_0$ and $\rho^j(z_0)$ are zeros of $B$, with the
 same multiplicity. Therefore, there is a subsequence $\{z_j\}$ in $Z(B)$   such that the zero sequence of $Z(B)$ is as follows:
 $$z_0,\rho(z_0),\cdots, \rho^{n-1}(z_0);z_1,\rho(z_1),\cdots, \rho^{n-1}(z_1);z_2,\cdots ,$$
 where $$\rho(z)=\xi z.Z$$
Notice that the zero sequence of $\varphi_{z_k^n}(z^n)$ is exactly
$z_k,\rho(z_k),\cdots, \rho^{n-1}(z_k)$. On the other hand,
$\{z_j\}$ satisfies the Blaschke condition:
$$\sum_{j=1}^\infty (1-|z_j|^2)<\infty.$$
 Since $ 1-|z_j^n|^2 =1-(|z_j|^2)^n\leq n (1-|z_j|^2),$  then $$\sum_{j=1}^\infty (1-|z_j^n|^2)<\infty.$$
 That is,  $\{z_j^n\}$  also gives a Blaschke product, denoted by $B_1$.
By omitting a unimodular constant, we have $B_1(z^n)=B(z)$. Since
$B$ is a thin Blaschke product, one sees that $B_1$ is also a thin
Blaschke product. The proof is complete.
\end{proof}
Therefore,  those $B$ in Proposition \ref{69} have already appeared
in Example \ref{67}.
 One may have a try to find some  thin Blaschke product $B$ of other type
such that $\mathcal{V}^*(B)$ is nontrivial. However, by Theorem
\ref{41} it seems that such a $B$  rarely appears.

\section{Thin   and finite Blaschke products}
~~~~ Very recently, it is   shown that $\mathcal{V}^*(\phi)$ are abelian for   finite Blaschke products $  \phi$\cite{DPW}.
  In this section, a similar version is presented for thin Blaschke
products. Also, we provide some examples of $\mathcal{V}^*(\phi)$, where  $ \phi$ are finite Blaschke products.

First, we give an example to show that there exists a thin Blaschke
product $B$ whose zero set $Z(B) $ satisfying $\overline{Z(B)}
\nsupseteq\mathbb{T}.$
\begin{exam}
 Let $\{c_n\}$ be a  sequence satisfying $c_n>0$ and $ \lim\limits_{n\to \infty}c_n=0.$ \label{71}
Suppose that $\{z_n\}$ is a sequence of points in the open unit disk
$\mathbb{D}$ such that
$$ \frac{1-|z_n|}{1-|z_{n-1}|}=c_n.$$
We will show that $\{z_n\}$ is a thin Blaschke sequence. To see
this,  given a positive integer $m$, let $k>m$ and consider the
product
\begin{equation} \prod_{j:j\neq k} d(z_k,z_j) \equiv \prod_{1\leq j\leq m} d(z_k,z_j) \prod_{j>m,j\neq k} d(z_k,z_j).
\label{7.1}\end{equation}

By the discussion on \cite[pp.203,204]{Hof}, if $\{w_n\}$ is  a
sequence  in   $\mathbb{D}$ satisfying
$$ \frac{1-|w_n|}{1-|w_{n-1}|}\leq c<1,$$
then $$ \prod_{j:j\neq k} d(w_k,w_j) \geq \big(\prod_{j=1}^\infty
\frac{1-c^j}{1+c^j}\big)^2. $$ Notice that the right hand side tends
to $1$ as $c$ tends to $ 0$. Now write \linebreak $d_m =\sup
\{c_j:j\geq m\}.$
 Since $ \lim\limits_{n\to \infty}c_n=0,$ then $ \lim\limits_{m\to \infty}d_m=0.$
For any $k>m$,  $$ \frac{1-|z_k|}{1-|z_{k-1}|}=c_k\leq d_m,$$ and
thus
$$  \prod_{j>m,j\neq k} d(z_k,z_j)\geq \big(\prod_{j=1}^\infty \frac{1-d_m^j}{1+d_m^j}\big)^2, \, k>m, $$
which implies that $$    \prod_{j>m,j\neq k} d(z_k,z_j) $$ tends to
$1$ as $m\to \infty$. For any $\varepsilon >0$, there is an $m_0$
such that
$$    \prod_{j>m_0,j\neq k} d(z_k,z_j)\geq 1- \varepsilon .$$
On the other hand,  $$\lim\limits_{k\to \infty } \prod_{1\leq j\leq
m_0} d(z_k,z_j)=1,$$ which, combined with (\ref{7.1}), implies that $$
\liminf_{k\to\infty} \prod_{j:j\neq k} d(z_k,z_j) \geq 1-\varepsilon
.$$ Therefore, by the arbitrariness of $\varepsilon$,
$$\lim_{k\to\infty}\prod_{j:j\neq k} d(z_k,z_j) =1;$$
that is, $\{z_k\} $ is a thin Blaschke product.

Now choose a dense sequence $\{r_n\}$ in $[0,1]$. If $\{z_n\}$ is
given as above, then $\{|z_n|\exp(2\pi i r_n)\}$ also gives a thin
Blaschke sequence. Moreover, we have  $\overline{\{|z_n|\exp(2\pi i
r_n)\}}\supseteq \mathbb{T}.$ For example,   set $|z_n|=1-
\frac{1}{n!}, $ and then $\{|z_n|\exp(2\pi i r_n)\}$ is a thin Blaschke sequence.
\end{exam}

It is of interest that the commutativity of
  the von Neumann algebras  $\mathcal{V}^*(B )$ for  thin   Blaschke products $B$ is connected with that of   finite Blaschke products, by the following.
\begin{prop}The followings are equivalent: \label{72}
\item[$(i)$] For any thin   Blaschke product $B$ with $\overline{Z(B)} \nsupseteq \mathbb{T}$, $\mathcal{V}^*(B )$ is abelian.
\item [$(ii)$]For any finite Blaschke product $  \phi$, $\mathcal{V}^*(\phi)$ is abelian.
\item [$(iii)$] For any finite Blaschke product $  \phi$,  $M_\phi$
 has exactly $q$ minimal reducing subspaces, where $q$ equals the number of components of $S_\phi$.
\end{prop}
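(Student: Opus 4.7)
The equivalence (ii) $\Leftrightarrow$ (iii) is essentially contained in the earlier literature discussed in the introduction. Indeed, $\mathcal{V}^*(\phi)$ is a finite-dimensional C*-algebra of dimension at most $\deg\phi$, and by \cite{DSZ,GH1} its minimal projections are in bijection with the components of the Riemann surface $S_\phi$ precisely when $\mathcal{V}^*(\phi)$ is abelian---a finite-dimensional C*-algebra is abelian iff every simple direct summand is one-dimensional, iff the number of minimal projections equals the dimension. I will simply invoke this fact; the substantive content of the proposition is the equivalence (i) $\Leftrightarrow$ (ii).

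For (i) $\Rightarrow$ (ii), the plan is to realize any given finite Blaschke product $\phi$ as a factor of a thin Blaschke product of the type admissible in (i). Using Example \ref{68}, pick a thin Blaschke product $B_0$ whose zero sequence lies on a single non-tangential $C^1$-curve $\gamma$ terminating at some $\zeta_0\in\mathbb{T}$, and set $B=B_0\circ\phi$. Example \ref{68} ensures $B$ is thin. Since $\phi$ extends continuously to $\overline{\mathbb{D}}$ with $|\phi|\equiv 1$ on $\mathbb{T}$, every boundary cluster point of $Z(B)=\phi^{-1}(Z(B_0))$ must lie in the finite set $\phi^{-1}(\zeta_0)\cap\mathbb{T}$, so $\overline{Z(B)}\nsupseteq\mathbb{T}$. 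Next I will show $\mathcal{V}^*(\phi)\subseteq\mathcal{V}^*(B)$ by proving $M_B\in\mathcal{W}^*(\phi)$: approximating $B_0$ by its partial Blaschke products $B_{0,n}$, each $M_{B_{0,n}(\phi)}$ is a rational function of $M_\phi$ (the factors $(I-\bar z_kM_\phi)^{-1}$ are norm-convergent Neumann series because $\|z_kM_\phi\|<1$), and $M_{B_{0,n}(\phi)}\to M_B$ in SOT by bounded convergence on $L_a^2(\mathbb{D})$. Since $\mathcal{W}^*(\phi)$ is WOT-closed and self-adjoint, both $M_B,M_B^*\in\mathcal{W}^*(\phi)$, yielding $\mathcal{V}^*(\phi)=\mathcal{W}^*(\phi)'\subseteq\mathcal{V}^*(B)$. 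Under hypothesis (i), $\mathcal{V}^*(B)$ is abelian, hence so is its subalgebra $\mathcal{V}^*(\phi)$, giving (ii).

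For the converse (ii) $\Rightarrow$ (i), let $B$ be a thin Blaschke product with $\overline{Z(B)}\nsupseteq\mathbb{T}$. By Corollary \ref{65}, $\mathcal{V}^*(B)$ is generated by the operators $\mathcal{E}_{[\rho]}$ with $G[\rho]$ of bi-finite multiplicity, so it suffices to show any two such operators $\mathcal{E}_{[\rho_1]}$ and $\mathcal{E}_{[\rho_2]}$ commute. A direct local calculation on a small neighborhood of a point $z_0\in E$ admitting all branches $\rho_{1,i}\in[\rho_1]$ and $\rho_{2,j}\in[\rho_2]$ gives
\[
\mathcal{E}_{[\rho_1]}\mathcal{E}_{[\rho_2]}h(z)=\sum_{i,j}h\circ(\rho_{2,j}\circ\rho_{1,i})(z)\,(\rho_{2,j}\circ\rho_{1,i})'(z),
\]
and similarly for the reversed product, so commutativity reduces to the multiset identity $\{\rho_{2,j}\circ\rho_{1,i}\}_{i,j}=\{\rho_{1,i}\circ\rho_{2,j}\}_{i,j}$ of compositions (the derivatives being determined by the maps, since $\{\rho_{k}(z_0)\}$ is an interpolating sequence). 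Because both classes have finite multiplicity, the $B$-orbit of $z_0$ under these compositions is a finite subset of $\mathbb{D}$. I will choose $t\in(0,1)$ close enough to $1$ that the component $\Omega_t$ of $\{z:|B(z)|<t\}$ containing $z_0$ also contains this finite orbit and is a Jordan domain on which $B|_{\Omega_t}$ is a proper map. Following the argument in the proof of Corollary \ref{46}, a Riemann map $f:\mathbb{D}\to\Omega_t$ conjugates $B|_{\Omega_t}$ to a finite Blaschke product $\tilde B=(B\circ f)/t$, and the local inverses $\rho_{k,i}$ of $B$ inside $\Omega_t$ pull back, via $\rho\mapsto f^{-1}\circ\rho\circ f$, to local inverses $\tilde\rho_{k,i}$ of $\tilde B$. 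Applying (ii) to $\tilde B$, the operators $\mathcal{E}_{[\tilde\rho_1]}$ and $\mathcal{E}_{[\tilde\rho_2]}$ commute in $\mathcal{V}^*(\tilde B)$, yielding the multiset identity for the $\tilde\rho$'s, which conjugates back under $f$ to the identity for the $\rho$'s, establishing the desired commutativity in $\mathcal{V}^*(B)$.

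The main obstacle lies in this last implication: I must verify that conjugation by $f$ transports equivalence classes and compositions faithfully between the global $(B,\mathcal{V}^*(B))$ and the truncated $(\tilde B,\mathcal{V}^*(\tilde B))$. Specifically, $\Omega_t$ must be chosen simply-connected and large enough so that each $[\tilde\rho_k]$ is a genuine equivalence class of local inverses of $\tilde B$ on all of $\mathbb{D}$, with the same multiplicity as $[\rho_k]$, and so that all second-order compositions $\rho_{1,i}\rho_{2,j}(z_0)$ remain realizable inside $\Omega_t$. Once this correspondence is in place, the abelian hypothesis on $\mathcal{V}^*(\tilde B)$ transfers down cleanly to yield commutativity of the generators of $\mathcal{V}^*(B)$.
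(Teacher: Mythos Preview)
Your proposal is correct and follows essentially the same route as the paper: for (i)$\Rightarrow$(ii) you compose a suitable thin Blaschke product with $\phi$ and use the inclusion $\mathcal{V}^*(\phi)\subseteq\mathcal{V}^*(B)$ (the paper uses the specific $\psi$ with zeros $\{1-1/n!\}$ from Example~\ref{71}, but your choice works equally well); for (ii)$\Rightarrow$(i) you invoke Corollary~\ref{65}, reduce commutativity of two $\mathcal{E}_{[\rho_k]}$'s to a multiset identity of compositions at a point, and transfer via the Riemann map $f:\mathbb{D}\to\Omega_t$ to a finite Blaschke product $\tilde B$, exactly as the paper does.

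One caution about the ``main obstacle'' you flag: you should \emph{not} try to arrange that each $[\tilde\rho_k]$ is a single equivalence class in $S_{\tilde B}$ with the same multiplicity as $[\rho_k]$. In general the global class $[\rho_k]$, when restricted to $\Omega_t$, may split into several $S_{\tilde B}$-classes (loops in $E$ witnessing equivalence need not lie in $\Omega_t$). What saves the argument is that equivalence in $S_{\tilde B}$ implies equivalence in $S_B$, so the branches of $[\rho_k]$ at $z_0$ partition into finitely many $S_{\tilde B}$-classes whose $\mathcal{E}$-operators sum to the conjugate of $\mathcal{E}_{[\rho_k]}$; since $\mathcal{V}^*(\tilde B)$ is abelian by (ii), this sum commutes with the analogous sum for $[\rho_2]$, and that is all you need for the multiset identity. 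The paper handles this point rather implicitly (``the equivalences $[\sigma]$ and $[\tau]$ naturally grow on $\Omega_t$''), but the underlying mechanism is the one just described.
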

\begin{proof} For (ii) $\Leftrightarrow $ (iii), see \cite{DSZ}.

To prove (i) $\Rightarrow$ (ii), consider the thin Blaschke product
$\psi$ whose zero sequence is $\{1-\frac{1}{n!}\}$, see Example
\ref{71}. Then for a given finite Blaschke product $  \phi$, write
$B\triangleq \psi\circ \phi $. By Example \ref{68}, $B$ is also a
thin Blaschke product and it is clear that $\mathcal{V}^*(B )
\supseteq \mathcal{V}^*(\phi )$. Since $\mathcal{V}^*(B )$ is
abelian, then so is $\mathcal{V}^*(\phi )$.

It remains to show that (ii) $\Rightarrow$ (i). The following
discussion depends heavily on the geometric structure of a thin
Blaschke product. Now suppose that  $B$ is a thin   Blaschke product
with $\overline{Z(B)} \nsupseteq \mathbb{T}$. Without loss of
generality, we may assume that $\mathcal{V}^*(B )$  is not trivial.
By Corollary \ref{65},  $\mathcal{V}^*(B )$ is generated by
$S_{[\sigma]}$, where $[\sigma]$ is
 a component of $S_B$ satisfying $\sharp [\sigma]<\infty$ and $\sharp [\sigma^-]<\infty.$ It suffices to show that
 all such operators $S_{[\sigma]}$ commutes.

 Now observe that for any  $[\sigma]$ and $[\tau]$,
  $S_{[\sigma]} S_{[\tau]}$ has the form $$\sum_{i}S_{[\sigma_i]},$$
  where the sum is finite and some $\sigma_i$ may lie in the same class; by Lemma
  \ref{64}   each $\sigma_i$ satisfying  $\sharp [\sigma_i]<\infty$ and $\sharp [\sigma_i^-]<\infty.$
   Based on this observation, we define \emph{the composition} $[\tau]\circ [\sigma]$ to be a formal sum
  $\sum_{i} [\sigma_i] $; and
  its graph $G([\tau]\circ [\sigma]) $ is defined to be the  \emph{disjoint} union of   $G[\sigma_i]$, denoted by $\sum_{i} G[\sigma_i] $.
  For example, if both  $\sigma$ and $\tau$ are in $\mathrm{Aut}(\mathbb{D})$, then $G([\tau]\circ [\sigma])=G([\tau\circ \sigma])$.

 Clearly,   $S_{[\sigma]} S_{[\tau]}= S_{[\tau]} S_{[\sigma]} $  if and only if $[\sigma]\circ [\tau]= [\tau] \circ [\sigma]$;
  if and only if $G([\sigma]\circ [\tau])= G([\tau] \circ [\sigma])$.
  Next, we will take a close look at $G([\sigma]\circ [\tau])$ and $ G([\tau] \circ [\sigma])$. Write $m= \sharp[\sigma]$ and  $n= \sharp[\tau]$.
  Fix a point $z_0\in E$, there are exactly $m$ points  in $G[\sigma]$ with the form $(z_0,\cdot)$:
  $$(z_0,\xi'_1),\cdots, (z_0,\xi'_m).$$
  For each $j=1,\cdots, m$, there are $n$ points in $G[\tau]$ with the form $(\xi'_j,\cdot)$:
  $$(\xi'_j,\xi_j^k), \ k=1,\cdots, n.$$
For each $j$ and $k$,   $(z_0, \xi_j^k)$ is in some $G[\sigma_i]$ where $[\sigma_i]$ appears in the sum of \linebreak $[\tau]\circ  [\sigma]$. %
By omitting the order of $(z_0, \xi_j^k)$, the unordered  sequence
$\{(z_0, \xi_j^k)\}_{j,k}$ represents $G([\tau]\circ  [\sigma])$.
 Similarly, for the same  $z_0\in E$, there are exactly $n$ points in $G[\tau]$:
   $$(z_0,\eta'_1),\cdots, (z_0,\eta'_n).$$
  For each $j=1,\cdots, n$, there are $m$ points with the form $(\eta'_j,\cdot)$ in $G[\sigma]$:
  $$(\eta'_j,{\eta}_j^k), \ k=1,\cdots, m.$$
  Also, the sequence $\{(z_0, \eta_j^k)\}_{j,k}$ represents $G([\sigma]\circ [\tau]) $.
Then $$G([\sigma]\circ [\tau])=G( [\tau] \circ [\sigma])$$ if and
only if the sequences $\{(z_0, \xi_j^k)\}_{j,k}$ and  $\{(z_0,
\eta_j^k)\}_{j,k}$ are equal, omitting the order.

 Below,  to finish the proof, we will use some geometric property of thin Blaschke product.
  As done in the proof of Corollary \ref{46}, let $\Omega_t$ be a simply-connected
domain whose the boundary   is an analytic   Jordan curve; moreover,
$\Omega_t$ is $*$-connected.
  Also, we may assume that $t$ is enough large such that $z_0$ and all finitely many points $\xi'_j,\xi_j^k;\eta'_j,\eta_j^k$
  lie in $\Omega_t$. Again by the proof of Corollary \ref{46},
 there is a biholomorphic map $f:\mathbb{D}\to  \Omega_t$ such that $B\circ f=t\phi$ for some finite Blaschke product.
 The equivalences $[\sigma]$ and $ [\tau]$ naturally grows on $\Omega_t$, and by the above paragraph, the compositions $[\tau]\circ  [\sigma]$
 and $[\sigma]\circ [\tau] $ also make  sense on  $\Omega_t$.
 By the identity $B\circ f=t\phi$, the map
 $$[\rho]\mapsto [\rho^*]\triangleq [f^{-1}\circ \rho \circ f]$$
  maps each component $[\rho]\in S_{B|_{\Omega_t}}$ to $[\rho^*]\in S_\phi$.
We will pay special attention to $\rho=\sigma$ or $ \tau$. Since
$\mathcal{V}^*(\phi)$ is abelian, then $S_{[\sigma^*]} S_{[\tau^*]}=
S_{[\tau^*]} S_{[\sigma^*]} $, which implies that $G([\sigma^*]\circ
[\tau^*])= G([\tau^*] \circ [\sigma^*])$. This immediately gives
that $G([\sigma]\circ [\tau])=G( [\tau] \circ [\sigma])$, and hence
$S_{[\sigma]} S_{[\tau]}= S_{[\tau]} S_{[\sigma]} $. Therefore,
$\mathcal{V}^*(B )$ is abelian. The proof is complete.
\end{proof}

Very recently, Douglas, Putinar and Wang\cite[Theorem 2.3]{DPW} showed that   $\mathcal{V}^*(\phi)$ is abelian for
any finite Blaschke product $\phi$. Thus,  we present our main result in this section as follows.
\begin{cor}For any thin   Blaschke product $B$ satisfying $\overline{Z(B)} \nsupseteq\mathbb{T}$, $\mathcal{V}^*(B )$ is abelian. \label{73}
\end{cor}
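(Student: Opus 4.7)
The plan is to derive Corollary \ref{73} as an immediate consequence of the combination of Proposition \ref{72} with the recent result of Douglas, Putinar and Wang \cite{DPW}. Proposition \ref{72} provides the bridge: statements (i), (ii) and (iii) are equivalent, where (i) is exactly the conclusion we are seeking and (ii) is the corresponding commutativity assertion for all finite Blaschke products. Thus the entire argument reduces to verifying (ii) and then invoking the implication (ii) $\Rightarrow$ (i) which was already established in the proof of Proposition \ref{72}.

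To supply (ii), I would cite \cite[Theorem 2.3]{DPW}, where it is proved that $\mathcal{V}^*(\phi)$ is abelian for every finite Blaschke product $\phi$. With this input, condition (ii) of Proposition \ref{72} holds unconditionally, so condition (i) also holds. Unpacking (i) gives precisely the statement of Corollary \ref{73}: whenever $B$ is a thin Blaschke product with $\overline{Z(B)} \nsupseteq \mathbb{T}$, the von Neumann algebra $\mathcal{V}^*(B)$ is abelian.

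The only thing worth emphasizing in the write-up is \emph{why} the proof of (ii) $\Rightarrow$ (i) from Proposition \ref{72} genuinely transfers the finite-dimensional result to the infinite-dimensional setting. The key mechanism is the following: by Corollary \ref{65}, under the hypothesis $\overline{Z(B)} \nsupseteq \mathbb{T}$ the algebra $\mathcal{V}^*(B)$ is generated by the operators $\mathcal{E}_{[\sigma]}$ for components $G[\sigma]$ of $S_B$ with both $\sharp[\sigma]<\infty$ and $\sharp[\sigma^-]<\infty$; and the proof of Proposition \ref{72} shows that the composition structure of any two such classes $[\sigma]$ and $[\tau]$ is faithfully modeled, via a Riemann map $f:\mathbb{D}\to \Omega_t$ and the identity $B\circ f=t\phi$, by the composition structure of the corresponding classes $[\sigma^*]$ and $[\tau^*]$ of a \emph{finite} Blaschke product $\phi$ obtained by restricting $B$ to a large simply-connected $\ast$-connected subdomain $\Omega_t$ containing all relevant sheet-points. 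Consequently the commutativity $S_{[\sigma^*]}S_{[\tau^*]}=S_{[\tau^*]}S_{[\sigma^*]}$ granted by \cite{DPW} transfers back to $\mathcal{E}_{[\sigma]}\mathcal{E}_{[\tau]}=\mathcal{E}_{[\tau]}\mathcal{E}_{[\sigma]}$.

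There is essentially no obstacle of substance here, since all the heavy lifting has already been done: the geometric reduction of thin Blaschke commutants to finite Blaschke commutants is the content of Proposition \ref{72}, and the commutativity for finite Blaschke products is the content of \cite[Theorem 2.3]{DPW}. The only item requiring care in the write-up is to state the logical chain cleanly and to point out explicitly that the assumption $\overline{Z(B)} \nsupseteq \mathbb{T}$ is exactly the assumption under which Corollary \ref{65} and thus Proposition \ref{72} apply.
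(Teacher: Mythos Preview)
Your proposal is correct and matches the paper's approach exactly: the paper simply invokes \cite[Theorem 2.3]{DPW} to establish condition (ii) of Proposition \ref{72}, and then the implication (ii) $\Rightarrow$ (i) yields Corollary \ref{73}. Your additional paragraph recapping the mechanism of (ii) $\Rightarrow$ (i) is a helpful gloss but not strictly needed, since that implication is already proved in Proposition \ref{72}.
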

 Now turn to finite Blaschke products. The following shows that even if
 the assumption of  Corollary \ref{42} fails, $   \mathcal{V}^*(B) $ may be also $2$-dimensional. In fact, it is a consequence of
 Lemma \ref{51}.
\begin{prop}  Suppose that $B$ is a finite  Blaschke product. \label{74}
Then  there is an  $s(0<s<1)$ such that for any $ \lambda\in \mathbb{D}$
with $|\lambda|>s$, $\dim \mathcal{V}^*(B\varphi_\lambda)=2,$ and hence
$\mathcal{V}^*(B\varphi_\lambda)$ is abelian.
\end{prop}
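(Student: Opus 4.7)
The plan is to invoke Lemma~\ref{51} to produce a distinguished simple critical point $w^*$ of $B\varphi_\lambda$, and then count the components of the Riemann surface $S_{B\varphi_\lambda}$ via a monodromy argument on the two-piece sub-level decomposition that $w^*$ induces. Assume $n=\deg B\geq 2$ (the case $n\leq 1$ reduces to the standard fact that every degree-$2$ Blaschke product has $\dim\mathcal{V}^*=2$), so $\deg(B\varphi_\lambda)=n+1$. Fix $r\in(0,1)$ and let $s=s(r)$ be as in Lemma~\ref{51}; inspecting its Rouch\'e-based proof, one can pick $r'<r$ and enlarge $s$ so that, for every $|\lambda|>s$, the critical set $Z((B\varphi_\lambda)')$ consists of $n-1$ points inside $r'\mathbb{D}$ together with a single, necessarily simple, critical point $w^*$ outside $r'\mathbb{D}$, while Lemma~\ref{51}(ii) guarantees that $B\varphi_\lambda(w^*)$ is separated from every other critical value of $B\varphi_\lambda$.

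Fix such a $\lambda$ and choose $r_1\in(r',|B\varphi_\lambda(w^*)|)$. By the sub-level analysis in the proof of Lemma~\ref{51}, $\{|B\varphi_\lambda|<r_1\}$ splits into exactly two connected components $\Omega_1\sqcup\Omega_2$: $\Omega_1$ is an $n$-fold proper cover of $r_1\mathbb{D}$ under $B\varphi_\lambda$ and contains $Z(B)$ together with all $n-1$ inner critical points, while $\Omega_2$ is mapped univalently onto $r_1\mathbb{D}$ and contains $\lambda$. Pick a base point $y_0\in r_1\mathbb{D}\setminus\mathcal{E}_{B\varphi_\lambda}$ and index the fiber as $\{z_0,\ldots,z_{n-1}\}\sqcup\{z_n\}$ according to this splitting; let $M\leq\mathrm{Sym}(\{z_0,\ldots,z_n\})$ be the full monodromy group of the branched covering $B\varphi_\lambda:\mathbb{D}\to\mathbb{D}$. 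Because every loop in $r_1\mathbb{D}\setminus\mathcal{E}_{B\varphi_\lambda}$ lifts inside $\Omega_1\sqcup\Omega_2$, the subgroup $M'\leq M$ generated by such loops preserves the partition and fixes $z_n$; since $\Omega_1$ is connected, the standard covering-space fact gives that $M'$ acts transitively on $\{z_0,\ldots,z_{n-1}\}$.

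Finally, since $M'$ sits inside the stabilizer $M_{z_n}$, the orbits of $M_{z_n}$ on the fiber are exactly $\{z_n\}$ and $\{z_0,\ldots,z_{n-1}\}$. Translating into the language of Section~3, the components of $S_{B\varphi_\lambda}$ based at $z_n$ are the identity component $G[z]$ and one nontrivial component $G[\rho]$ of multiplicity $n$, for a total of two components. The Douglas--Putinar--Wang result cited after Corollary~\ref{42} then gives $\dim\mathcal{V}^*(B\varphi_\lambda)=2$, and abelianness is automatic. An equivalent graph-theoretic viewpoint: setting $F=Z((B\varphi_\lambda)')\setminus\{w^*\}$, the pair $(B\varphi_\lambda,F)$ meets conditions (i)--(ii) of Theorem~\ref{41}, and running the proof of Theorem~\ref{41} in the finite-Blaschke regime produces a subgraph $\Gamma_0$ absorbing the $n$ vertices in $\Omega_1$ together with one extra vertex $w$ attached by a single $*$-arc through $w^*$; applying the Claim of that proof with $z_0=w\notin\Gamma_0$ collapses all pairs $(w,z_j)$ with $z_j\in\Gamma_0$ into a single nontrivial component.

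The main obstacle is producing the ``decoupled'' simple critical point $w^*$: one needs both that $w^*$ is the \emph{only} critical point of $B\varphi_\lambda$ outside $r_1\mathbb{D}$ (so that $M'$ is really generated only by the inner critical values and therefore fixes $z_n$) and that $w^*$ is a \emph{simple} zero of $(B\varphi_\lambda)'$ (so that exactly two sheets meet at $w^*$ and the transposition it contributes does not enlarge the orbits that $M'$ already produces). Both facts are delivered by the Rouch\'e argument in the proof of Lemma~\ref{51}, combined with Bochner's theorem (Theorem~\ref{47}), which forces the remaining critical point outside $r'\mathbb{D}$ to be simple since the total multiplicity of $(B\varphi_\lambda)'$ is $n$ while $n-1$ is already captured inside $r'\mathbb{D}$. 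Once these are in place the component count is forced to be $2$.
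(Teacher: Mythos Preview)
Your argument is correct and follows essentially the same route as the paper: both proofs invoke Lemma~\ref{51} to obtain the two-piece decomposition $\Omega_1\sqcup\Omega_2$ of the sub-level set, with $B\varphi_\lambda|_{\Omega_2}$ univalent, and then argue that the $n$ fibre points in $\Omega_1$ all lie in a single nontrivial component of $S_{B\varphi_\lambda}$. The only difference is packaging---the paper phrases this last step via the gluable machinery (Proposition~\ref{45}) and Lemma~\ref{48} applied with $z_0=w\in\Omega_2$, whereas you phrase it as transitivity of the monodromy subgroup $M'$ on the $\Omega_1$-sheets together with $M'\leq M_{z_n}$; these are the same statement in different language, so no substantive deviation.
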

\begin{proof} Suppose that $B$ is a finite  Blaschke product, and $s$ is a number in $(0,1)$
as in Lemma \ref{51}. For a fixed $\lambda\in \mathbb{D}$ satisfying
$|\lambda|>s$, write $$B_1=B\varphi_\lambda. $$ By the proof of Lemma \ref{51},
 there are two disjoint subdomains $\Omega_1$ and $\Omega_2$ of
$\mathbb{D}$, which are components of $\{z\in \mathbb{D}:|B_1(z)|<t
\}$ for some $t>0$. For some $w\in \Omega_2\cap E$, there are $n$
different points in $\Omega_1$, $w_1,\cdots, w_n$, such that
$B_1(w_j)=B_1(w),j=1,\cdots,n.$ Since $\Omega_1$ is $*$-connected
and $B_1 |_{ \Omega_2}$ is univalent, then by applying Lemma
\ref{48}, one can show that
$$G[(w,w_1)]=G[(w,w_j)], j=2,\cdots, n.$$ Thus, $G[(w,w_1)]$ and the
trivial component  $G[(w,w)]$ are the only components of $S_B$, and
hence $ \mathcal{V}^*(B_1)$ is generated by $S_{[(w,w_1)]}$ and $I$.
Therefore, $\dim \mathcal{V}^*(B_1)=2,$ i.e. $\dim
\mathcal{V}^*(B\varphi_\lambda)=2.$ Theorem III.1.2 in \cite{Da} states
that any finite dimensional von Neumann algebra is $*$-isomorphic to
the direct sum $\bigoplus_{k=1}^r M_{n_k}(\mathbb{C}) $ of full
matrix algebras. Therefore, $\mathcal{V}^*(B\varphi_\lambda)$ is abelian.
\end{proof}
The following provides another example  of von Neumann algebras  $
\mathcal{V}^*(B) $, where $B$ is a finite Blaschke product. We give
some estimates for $\dim \mathcal{V}^*(B) $.
\begin{exam} Let  $\lambda_1,\lambda_2, \lambda_3$ be three different  points in $
\mathbb{D}$ and  $\lambda_1\lambda_2\lambda_3\neq 0$.
 Write $B=z\varphi_{\lambda_1}^m(z)\varphi_{\lambda_2}^n(z)\varphi_{\lambda_3}^l (z)$.
 Then  $\dim
\mathcal{V}^*(B)\leq 4. $ The reasoning is as follows. Pick a point
$z_0$ that is enough close to $0$. By Theorem \ref{49}, it is not
difficult to see that there are $m+n+l+1$ points in
$B^{-1}(B(z_0))$:
 $$z_0; z_1^1,\cdots,z_1^m;z_2^1,\cdots,z_2^n;z_3^1,\cdots,z_3^l,$$
 where   $z_i^j$ are around $\lambda_i$$(i=1,2,3)$ for all possible $j$.
 Then by the proof of Lemma \ref{48}, for $ i=1,2,3$ we have $G[(z_0,z_i^1)]=G[(z_0,z_i^j)]$, $j=1,2,\cdots.$
 Thus, there are at most $4$ components of $S_B$: $$G[(z_0,z_0)]\equiv G[z],G[(z_0,z_1^1)],
 G[(z_0,z_2^1)] \  \mathrm{and} \  G[(z_0,z_3^1)].$$
Then by \cite[Theorem 7.6]{DSZ} or   Corollary \ref{65}, $\dim
\mathcal{V}^*(B)\leq 4. $

Similarly, if $B=z^n\varphi_{\lambda }(z) (\lambda \neq 0)$, then  $ \dim
\mathcal{V}^*(B)= 2$. In this case, $M_B$ has exactly $2$ minimal
reducing subspaces. In particular, if $n=3$, this is exactly
\cite[Theorem 3.1]{SZZ1}.
\end{exam}

%

 \vskip3mm \noindent{Kunyu Guo, School of Mathematical Sciences,
Fudan University, Shanghai, 200433, China, E-mail:
kyguo@fudan.edu.cn}

\noindent{Hansong Huang, Department of Mathematics, East China
University of Science and Technology, Shanghai, 200237, China,
E-mail: hshuang@ecust.edu.cn
\end{document}